\newtheorem{theorem}{Theorem}[section]
\newtheorem{definition}[theorem]{Definition}
\newtheorem{proposition}[theorem]{Proposition}
\newtheorem{conjecture}[theorem]{Conjecture}
\begin{document}

\title[Isolated partial Hadamard matrices]{Isolated partial Hadamard matrices, and related topics}

\author[T. Banica, D. \"Ozteke, L. Pittau]{Teodor Banica, Duygu \"Ozteke and Lorenzo Pittau}
\address{Department of Mathematics, University of Cergy-Pontoise, F-95000 Cergy-Pontoise, France. {\tt teo.banica@gmail.com, Ozteked@live.fr, lorenzopittau@gmail.com}}

\subjclass[2010]{15B34 (46L65, 81R50)}
\keywords{Quantum permutation, Hadamard matrix, Matrix model}

\begin{abstract}
We study the isolated partial Hadamard matrices, under the assumption that the entries are roots of unity, or more generally, under the assumption that the combinatorics comes from vanishing sums of roots of unity. We first review the various conjectures on the subject, and then we present several new results, regarding notably the master Hadamard matrices, and the McNulty-Weigert construction. We discuss then the notion of isolation in some related contexts, of the magic unitary matrices, and of the quantum permutation groups, with a number of conjectures on the subject.
\end{abstract}

\maketitle

\section*{Introduction}

A complex Hadamard matrix is a square matrix $H\in M_N(\mathbb C)$ whose entries are on the unit circle, $H_{ij}\in\mathbb T$, and whose rows are pairwise orthogonal. The basic example of such a matrix is the Fourier one, $F_N=(w^{ij})$ with $w=e^{2\pi i/N}$:
$$F_N=\begin{pmatrix}
1&1&1&\ldots&1\\
1&w&w^2&\ldots&w^{N-1}\\
\ldots&\ldots&\ldots&\ldots&\ldots\\
1&w^{N-1}&w^{2(N-1)}&\ldots&w^{(N-1)^2}
\end{pmatrix}$$

The terminology here comes from the fact that $F_N$ is the matrix of the Fourier transform over the cyclic group $\mathbb Z_N$. More generally, associated to any finite abelian group is its Fourier matrix $F_G\in M_{|G|}(\mathbb C)$, which is a complex Hadamard matrix. For instance, the group $G=\mathbb Z_2^n$ produces the Walsh matrix $W_N\in M_N(\pm1)$, with $N=2^n$.

In general, the complex Hadamard matrices can be thought of as being ``generalized Fourier matrices''. They can be used in various situations, where a ``non-standard'' generalization of the discrete Fourier transform is needed. There are many applications of this philosophy, with the complex Hadamard matrices virtually appearing in any branch of quantum physics. We refer here to the paper of Tadej and \.Zyczkowski \cite{tz1}.

\bigskip

As an example here, for the reader who is familiar with operator algebras, according to the von Neumann philosophy \cite{mvo}, \cite{von}, one interesting question is that of studying the pairs of orthogonal MASA in type I factors, and such pairs are parametrized by the complex Hadamard matrices. See Haagerup \cite{ha1}, \cite{ha2}, Jones \cite{jon}, Popa \cite{pop}. 

This point of view can be further developed in the context of Woronowicz's compact quantum groups \cite{wo1}, \cite{wo2}, with the conclusion that the MASA combinatorics comes in fact from a quantum permutation group $G\subset S_N^+$. For a Fourier matrix $F_G$ this quantum group is $G$ itself, acting on itself, $G\subset S_{|G|}\subset S_{|G|}^+$, and in general, the correspondence $H\leftrightarrow G$ is something of ``Fourier type''. In addition, a direct link with statistical mechanics is expected to be possible via quantum groups, along the lines of \cite{jon}. See \cite{bbi}, \cite{bbs}.

\bigskip

An interesting question is that of understanding the structure of the complex Hadamard matrices, from a purely mathematical point of view. These 
 matrices from a real algebraic manifold, which appears as an intersection of smooth manifolds, as follows:
$$C_N=M_N(\mathbb T)\cap\sqrt{N}U_N$$

The local geometry of $C_N$ was studied by Tadej and \.Zyczkowski in \cite{tz2}, with an explicit result regarding the first order deformations of the Hadamard matrices $H\in C_N$. In the case of the Fourier matrix $H=F_N$, which is of particular interest, a finer result, regarding the plain deformations, was recently obtained by Nicoara and White in \cite{nwh}.

An alternative approach to these questions comes from the ``almost Hadamard matrix'' theory in \cite{bn2}, \cite{bnz}, which conjecturally allows one to recover $C_N$ via a gradient descent method, starting from the unitary group $U_N$. For details here, we refer to \cite{bn2}.

\bigskip

Summarizing, there is some ongoing work on the geometric structure of $C_N$, with some partial results being already available. From a physical perspective all this is certainly interesting, because one would expect the local geometry of $C_N$ around one of its points $H\in C_N$ to appear in the various physical contexts where $H$ appears.

As an example here, one interesting question is whether the invariants of the quantum permutation group $G\subset S_N^+$ associated to a given matrix $H\in C_N$ can be recaptured from the knowledge of the geometry of $C_N$ around $H$. This problem is not solved yet, and any solution here could probably have some applications to statistical mechanics.

\bigskip

Now by getting back to abstract mathematics, and to questions regarding the general structure of $C_N$, the ``simplest'' Hadamard matrices $H\in C_N$, at least from a geometric viewpoint, are those having the smallest possible neighborhood inside $C_N$. 

To be more precise, given $H\in C_N$, we can always multiply the rows and columns by arbitrary numbers in $\mathbb T$, and when these numbers are close to 1, we obtain in this way a family of trivial deformations of $H$. By slightly abusing the language, when the only deformations of $H$ are these trivial deformations, we will call $H$ ``isolated''.

When $H$ is dephased, in the sense that its first row and column contain only 1 entries, this isolation condition means precisely that $H$ is isolated in the usual sense, inside the dephased Hadamard matrix manifold $D_N\subset C_N$. In general, each matrix $H\in C_N$ gives rise to a dephased matrix $H'\in D_N$, and the isolation condition on $H$ corresponds to the fact that $H'\in D_N$ is isolated, in the usual sense. See \cite{tz1}, \cite{tz2}.

\bigskip

The isolation condition is in general difficult to check, but a useful criterion comes from \cite{tz2}.  Indeed, if $H\in C_N$ has only trivial order 1 deformations, which amounts in checking that a certain system of algebraic equations has only trivial solutions, then it must be isolated. The system of equations coming from \cite{tz2} is of course quite complicated, but in practice, it can be implemented on a computer, and succesfully used in this way.

We will advance here on these isolation questions, notably with some new results regarding the McNulty-Weigert construction \cite{mwe}, which covers many known examples of isolated complex Hadamard matrices. We will discuss as well another special class of matrices, namely the master Hadamard matrices \cite{aff}. Finally, we will discuss the notion of isolation in some related contexts, of the magic unitary matrices, and of the quantum permutation groups, with a number of conjectures on the subject.

\bigskip

Technically speaking, most of our results are under the assumption that the entries of the matrix are roots of unity, or more generally, that the combinatorics of the matrix comes from vanishing sums of roots of unity. Also, in order to obtain more evidence for our various conjectures, we will often work in the rectangular matrix setting, that of the partial Hadamard matrices, which will provide us with some useful flexibility. Of course, the partial Hadamard matrices are objects having their own interest, and there is a growing theory here \cite{ba2}, \cite{bsk}, \cite{hal}, \cite{dle}, to which the present paper adds.

\bigskip

As in the square matrix case, the partial Hadamard matrices are expected to be relevant in connection with a number of quantum physics questions. This subject is, however, largely unexplored. As an example of a potential application here, each partial Hadamard matrix $H\in M_{M\times N}(\mathbb C)$ produces via a matrix model a certain quantum partial permutation semigroup $G\subset\widetilde{S}_M^+$, and therefore provides us with useful information on the quantum semigroup $\widetilde{S}_M^+$ itself. This was explained some time ago in \cite{bsk}, and further work on the subject, in \cite{ba3} and in subsequent papers, has shown that the quantum spaces of type $\widetilde{S}_M^+$ are very basic examples of noncommutative homogeneous spaces, on which one can integrate via an analogue of the Weingarten formula. Thus, we have here a connection with noncommutative geometry in a Riemannian sense, and from this perspective, the partial Hadamard matrices correspond to the ``simplest matrix models for the simplest noncommutative homogeneous spaces, on which we can effectively integrate''. All this remains of course to be clarified, and we will partly advance here on these questions.

\bigskip

The paper is organized in three parts, as follows:

\bigskip

{\bf I.} In sections 1-2 we discuss a number of general conjectures regarding the isolated partial Hadamard matrices, in connection with the notion of defect, from \cite{tz2}, and with the notion of regularity, from \cite{bbs}. A part of the work here consists in formulating some extensions of the results in \cite{tz2}, in the rectangular matrix setting, and we will do this. Another part of the work consists in discussing the $N=7$ extension and the rectangular analogues of the $N=6$ classification results in \cite{bbs}. Here the subject is quite technical, but we will advance on these questions, our main statement being a precise conjecture.

\bigskip

{\bf II.} In sections 3-4 we discuss defect and isolation questions for two special types of matrices, namely the master Hadamard matrices, from \cite{aff}, and the McNulty-Weigert matrices, from \cite{mwe}. In both cases, our first contribution will be an extension of the general theory, from the square matrix setting to the rectangular matrix setting. We will also obtain, as main results here, an abstract formula for the defect of a master Hadamard matrix, and an explicit formula for the main series of McNulty-Weigert matrices. Further studying the McNulty-Weigert matrices remains of course the main question around.

\bigskip

{\bf III.} In sections 5-6 we study the quantum permutation groups $G\subset S_N^+$ associated to the complex Hadamard matrices $H\in M_N(\mathbb C)$, and more generally the quantum partial permutation semigroups $G\subset\widetilde{S}_M^+$ associated to the partial complex Hadamard matrices $H\in M_{M\times N}(\mathbb C)$, and we discuss the notion of isolation for them. Our main results here build on the previous work in \cite{bsk}, with a number of new contributions. Generally speaking, the notion of isolation in the quantum algebra setting appears to be a quite subtle one. We will try, however, at least to formulate some questions on this subject.

\section{Hadamard matrices}

We are interested in what follows in the complex Hadamard matrices $H\in M_N(\mathbb C)$, and notably in a number of conjectures regarding the ``isolated'' case. In order to comment on these conjectures, which, generally speaking, are quite difficult, it is convenient to allow a certain degree of flexibility in our formalism. To be more precise, we will use:

\begin{definition}
A partial complex Hadamard matrix is a matrix $H\in M_{M\times N}(\mathbb C)$ whose entries are on the unit circle, $H_{ij}\in\mathbb T$, and whose rows are pairwise orthogonal.
\end{definition}

As a basic example here, we have the various $M\times N$ submatrices of a given complex Hadamard matrix $H\in M_N(\mathbb C)$, with $M\leq N$. In general, these matrices are objects having their own interest, and in the real case $H\in M_{M\times N}(\pm1)$ for instance, there are several interesting results regarding them, including general structure results \cite{hal}, probabilistic counting results \cite{ba2}, \cite{dle} and relations with quantum algebra \cite{ba3}, \cite{bsk}.

For our purposes here, we are mostly interested in adopting the above framework in order to be able to provide more evidence for certain square matrix conjectures.

As in the square matrix case, we can use the following equivalence relation:

\begin{definition}
Two partial complex Hadamard matrices $H,K\in M_{M\times N}(\mathbb C)$ are called equivalent if one can pass from one to the other by permuting the rows and columns, or by multiplying the rows and columns by numbers in $\mathbb T$.
\end{definition}

Observe that any partial complex Hadamard matrix can be assumed, up to equivalence, to be ``dephased'', in the sense that its first row and column consist of $1$ entries only.

Consider now the real algebraic manifold $C_{M,N}$ formed by all the $M\times N$ partial complex Hadamard matrices. In order to discuss the geometry of this manifold, and notably the notion of isolation that we are interested in, we use the following conventions:
\begin{enumerate}
\item We denote by $M_x$ an unspecified neighborhood of a point in a manifold, $x\in M$.

\item We consider functions $f:M_x\to N_y$, which by definition satisfy $f(x)=y$.

\item For $q\in\mathbb T_1$, meaning $q\in\mathbb T$ close to $1$, we define $q^r$ with $r\in\mathbb R$ by $(e^{it})^r=e^{itr}$. 
\end{enumerate}

With these conventions made, we can talk about deformations, as follows:

\begin{definition}
Let $H\in C_{M,N}$ be a partial complex Hadamard matrix.
\begin{enumerate}
\item A deformation of $H$ is a smooth function $f:\mathbb T_1\to (C_{M,N})_H$.

\item The deformation is called ``affine'' if $f_{ij}(q)=H_{ij}q^{A_{ij}}$, with $A\in M_{M\times N}(\mathbb R)$.

\item We call ``trivial'' the deformations $f_{ij}(q)=H_{ij}q^{a_i+b_j}$, with $a\in\mathbb R^M,b\in\mathbb R^N$.
\end{enumerate}
\end{definition}

Here the adjective ``affine'' comes from $f_{ij}(e^{it})=H_{ij}e^{iA_{ij}t}$, because the function $t\to A_{ij}t$ which produces the exponent is indeed affine. As for the adjective ``trivial'', this comes from the fact that $f(q)=(H_{ij}q^{a_i+b_j})_{ij}$ is obtained from $H$ by multiplying the rows and columns by numbers in $\mathbb T$, so it is automatically partial Hadamard. See \cite{ba1}, \cite{tz1}.

We have $C_{M,N}=M_{M\times N}(\mathbb T)\cap\sqrt{N}U_{M,N}$, where $U_{M,N}\subset M_{M\times N}(\mathbb C)$ is the set of matrices having all rows of norm 1, and pairwise orthogonal. This remark leads us to:

\begin{definition}
Associated to a point $H\in C_{M,N}$ are:
\begin{enumerate}
\item The enveloping tangent space: $\widetilde{T}_HC_{M,N}=T_HM_{M\times N}(\mathbb T)\cap T_H\sqrt{N}U_{M,N}$.

\item The tangent cone $T_HC_{M,N}$: the set of tangent vectors to the deformations of $H$.

\item The affine tangent cone $T_H^\circ C_{M,N}$: same as above, using affine deformations only.

\item The trivial tangent cone $T_H^\times C_{M,N}$: as above, using trivial deformations only.
\end{enumerate}
\end{definition}

Observe that $\widetilde{T}_HC_{M,N},T_HC_{M,N}$ are real vector spaces, and that $T_HC_{M,N},T_H^\circ C_{M,N}$ are two-sided cones ($\lambda\in\mathbb R,A\in T\implies\lambda A\in T$). Also, we have inclusions as follows:
$$T_H^\times C_{M,N}\subset T_H^\circ C_{M,N}\subset T_HC_{M,N}\subset\widetilde{T}_HC_{M,N}$$

Since $\widetilde{T}_HC_{M,N}$ is a real vector space, of particular interest is the computation of its dimension $d(H)=\dim(\widetilde{T}_HC_{M,N})$, called defect of $H$. See \cite{ba1}, \cite{nwh}, \cite{tz2}. We have:

\begin{proposition}
Let $H\in C_{M,N}$, and pick $K\in\sqrt{N}U_N$ extending $H$.
\begin{enumerate}
\item $\widetilde{T}_HC_{M,N}\simeq\{A\in M_{M\times N}(\mathbb R)|\sum_kH_{ik}\bar{H}_{jk}(A_{ik}-A_{jk})=0,\forall i,j\}$.

\item $\widetilde{T}_HC_{M,N}\simeq\{E=(X\ Y)\in M_{M\times N}(\mathbb C)|X=X^*,(EK)_{ij}\bar{H}_{ij}\in\mathbb R,\forall i,j\}$.
\end{enumerate}
The correspondence $A\to E$ is given by $E_{ij}=\sum_kH_{ik}\bar{K}_{jk}A_{ik}$, $A_{ij}=(EK)_{ij}\bar{H}_{ij}$.
\end{proposition}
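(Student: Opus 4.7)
The plan is to obtain both descriptions of $\widetilde{T}_HC_{M,N}$ by differentiating the two defining conditions $H\in M_{M\times N}(\mathbb T)$ and $H\in\sqrt NU_{M,N}$, and then repackaging the resulting linear system using the extension $K$.

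For part (1), I would first identify $T_HM_{M\times N}(\mathbb T)$ with $M_{M\times N}(\mathbb R)$ via the correspondence $A\mapsto (iH_{ij}A_{ij})_{ij}$, using that the tangent space to $\mathbb T$ at a point $z$ is $iz\mathbb R$ (this is exactly the derivative at $t=0$ of the affine deformation $H_{ij}e^{iA_{ij}t}$). Then I would differentiate the orthonormality relations $\sum_kH_{ik}\bar H_{jk}=N\delta_{ij}$ along a tangent vector $E$, obtaining
$$\sum_kE_{ik}\bar H_{jk}+\sum_kH_{ik}\bar E_{jk}=0,$$
and substituting $E_{ij}=iH_{ij}A_{ij}$ yields, after cancelling a global factor of $i$, exactly the real linear system stated in (1).

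For part (2), the idea is to use $K$ as a unitary change of basis to diagonalize this system. Given $A$ as in (1), I set $E_{ij}=\sum_kH_{ik}\bar K_{jk}A_{ik}$ and split $E=(X\ Y)$ according to whether the column index $j$ lies in $[1,M]$ (rows of $K$ inherited from $H$) or in $[M+1,N]$ (rows of the completion). For $j\leq M$ one computes directly $X_{ij}=\sum_kH_{ik}\bar H_{jk}A_{ik}$, so $X=X^*$ is precisely the system in (1). On the other hand, using that $K$ is $\sqrt N$ times a unitary, $\sum_l\bar K_{lk}K_{lj}$ is a multiple of $\delta_{kj}$, so
$$(EK)_{ij}=\sum_{k,l}H_{ik}\bar K_{lk}K_{lj}A_{ik}$$
collapses to a scalar multiple of $H_{ij}A_{ij}$; hence $(EK)_{ij}\bar H_{ij}$ is real exactly when $A_{ij}$ is real, and the stated inverse formula $A_{ij}=(EK)_{ij}\bar H_{ij}$ (up to the normalization convention on $K$) recovers $A$ from $E$. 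The $Y$-block carries no constraint because the columns of $K$ indexed by $[M+1,N]$ are not involved in the original condition.

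The main technical obstacle is just bookkeeping: the index $i$ always ranges over $[M]$ while $j,k,l$ may range over $[M]$ or $[N]$, and one must check carefully that the block split of $E$ induced by $K$ separates the genuine constraint (encoded in $X=X^*$) from the reality of $A$ (encoded in the scalar condition $(EK)_{ij}\bar H_{ij}\in\mathbb R$). Once the dictionary is set up, the proof is a one-line differentiation together with a short computation using $K^*K\in\mathbb R\cdot I_N$.
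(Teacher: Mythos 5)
Your proposal is correct and follows essentially the same route as the paper: part (1) by linearizing the unimodularity and orthogonality conditions, and part (2) via the same change of variables $E_{ij}=\sum_kH_{ik}\bar K_{jk}A_{ik}$ (i.e.\ $E=RK^*$ with $R_{ij}=A_{ij}H_{ij}$), under which the system becomes $X=X^*$ and reality of $A$ becomes the condition $(EK)_{ij}\bar H_{ij}\in\mathbb R$. Your observation that the inverse formula really reads $A_{ij}=\frac{1}{N}(EK)_{ij}\bar H_{ij}$, because $K^*K=NI_N$, is a correct normalization remark, and does not affect the isomorphisms claimed.
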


\begin{proof}
These results are well-known in the square case, and the proofs extend:

(1) In the square case, $M=N$, this formula is due to Tadej and \.Zyczkowski \cite{tz1}, and was reproved in \cite{ba1}, by using a slightly different approach. The proof in \cite{ba1} extends in a straightforward way to the general rectangular case, $M\leq N$.

(2) Set $R_{ij}=A_{ij}H_{ij}$ and $E=RK^*$. The correspondence $A\to R\to E$ is then bijective, and we have $E_{ij}=\sum_kH_{ik}\bar{K}_{jk}A_{ik}$. The system of equations in (1) becomes $E_{ij}=\bar{E}_{ji}$ for any $i,j$ with $j\leq M$, which shows that we must have $E=(X\ Y)$ with $X=X^*$, and the condition $A_{ij}\in\mathbb R$ corresponds to the condition $(EK)_{ij}\bar{H}_{ij}\in\mathbb R$, as claimed.
\end{proof}

As an illustration, in the real case we obtain the following result:

\begin{proposition}
For any partial Hadamard matrix $H\in M_{M\times N}(\pm1)$ we have:
$$\widetilde{T}_HC_{M,N}\simeq M_M(\mathbb R)^{symm}\oplus M_{M\times(N-M)}(\mathbb R)$$
In particular, for any $N\times N$ Hadamard matrix we have $\widetilde{T}_HC_N\simeq M_N(\mathbb R)^{symm}$.
\end{proposition}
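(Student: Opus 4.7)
The plan is to apply the preceding Proposition~1.6, specializing its description of $\widetilde{T}_HC_{M,N}$ to the real case. Part~(2) gives the cleanest starting point, since it produces a direct sum-style parametrization. Part~(1) would also work, via a slightly more hands-on linear algebra argument, and I would keep it in mind as a cross-check of the dimension count.

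First I would argue that a real partial Hadamard matrix $H \in M_{M\times N}(\pm 1)$ admits an extension to some $K \in \sqrt{N}\,O_N$, i.e., one can pick $K$ real. Indeed, the rows of $H/\sqrt{N}$ form an orthonormal set in $\mathbb{R}^N$, and so can be completed to an orthonormal basis of $\mathbb{R}^N$; scaling back by $\sqrt{N}$ yields the required real extension $K$. Then in Proposition~1.6(2) we may use this real $K$, and the relations $\bar H_{ij} = H_{ij}$ simplify the reality constraint $(EK)_{ij}\bar H_{ij}\in\mathbb{R}$ to $(EK)_{ij}\in\mathbb{R}$.

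Next, since $K$ is real and invertible (as $KK^T = N I_N$), the condition that $EK$ be a real matrix is equivalent to $E$ being a real matrix. Writing $E=(X\ Y)$ with $X$ the initial $M\times M$ block, the Hermitian condition $X=X^*$ reduces in the real setting to $X=X^T$, i.e., $X \in M_M(\mathbb{R})^{symm}$, while $Y$ is unconstrained in $M_{M\times(N-M)}(\mathbb{R})$. This yields the asserted direct sum decomposition, and the $M=N$ statement is then just the case where the $Y$ block is empty.

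I do not anticipate a serious obstacle here: once the real extension $K$ is produced, the rest is essentially bookkeeping. The only point that requires a moment of care is the existence of a real extension, but this is immediate from the Gram-Schmidt process on $\mathbb{R}^N$. As a sanity check, one can count dimensions independently from Proposition~1.6(1): the conditions $\sum_k H_{ik}H_{jk}(A_{ik}-A_{jk})=0$ say exactly that the matrix $S_{ij}=\sum_k H_{ik}H_{jk}A_{ik}$ is symmetric, and using $HH^T=NI_M$ one parametrizes solutions as $A$ coming from a symmetric $M\times M$ part plus an $M\times(N-M)$ free part, recovering $\tfrac{M(M+1)}{2}+M(N-M)$, which matches the claim.
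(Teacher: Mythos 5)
Your proof is correct and follows essentially the same route as the paper: apply the rectangular description of the enveloping tangent space (part (2) of the preceding proposition) with a real completion $K\in\sqrt{N}\,O_N\subset\sqrt{N}\,U_N$ of $H$, observe that since $H$ and $K$ are real and $K$ is invertible the condition $(EK)_{ij}\bar H_{ij}\in\mathbb R$ forces $E$ to be real, and read off $X=X^T$ symmetric with $Y$ a free real block. The independent dimension count via part (1) is a harmless extra check, not needed for the argument.
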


\begin{proof}
We use Proposition 1.5 (2). Since $H$ is now real we can pick $K\in\sqrt{N}U_N$ extending it to be real too, and with nonzero entries, so the last condition appearing there, namely $(EK)_{ij}\bar{H}_{ij}\in\mathbb R$, simply tells us that $E$ must be real. Thus we have:
$$\widetilde{T}_HC_{M,N}\simeq\{E=(X\ Y)\in M_{M\times N}(\mathbb R)|X=X^*\}$$

But this is the formula in the statement, and we are done.
\end{proof}

Let us get now to the subject that we are interested in, namely isolation. A matrix $H\in C_{M,N}$ cannot be isolated, simply because the space of its Hadamard equivalents provides a copy $\mathbb T^{MN}\subset C_{M,N}$, passing through $H$. However, if we restrict the attention to the matrices which are dephased, the notion of isolation makes sense, and we have:

\begin{proposition}
Let $d(H)=\dim(\widetilde{T}_HC_{M,N})$.
\begin{enumerate}
\item This number, called undephased defect of $H$, satisfies $d(H)\geq M+N-1$.

\item If $d(H)=M+N-1$ then $H$ is isolated inside the dephased quotient $C_{M,N}\to D_{M,N}$.
\end{enumerate}
\end{proposition}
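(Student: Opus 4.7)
The plan is to prove part (1) by exhibiting an explicit $(M+N-1)$-dimensional subspace of $\widetilde{T}_HC_{M,N}$ coming from the trivial deformations, and to prove part (2) by identifying this subspace with the tangent directions of the torus orbit of the dephasing action, so that the dimension hypothesis forces the dephased enveloping tangent space to vanish; the isolation conclusion then follows from the Tadej--\.Zyczkowski criterion recalled in the introduction.

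For part (1), I would introduce the linear map $\Phi:\mathbb{R}^M\oplus\mathbb{R}^N\to M_{M\times N}(\mathbb{R})$ sending $(a,b)$ to the matrix with entries $A_{ij}=a_i+b_j$. The key observation is that any such $A$ automatically satisfies the defining system of Proposition 1.5(1), since
$$\sum_kH_{ik}\bar H_{jk}(A_{ik}-A_{jk})=(a_i-a_j)\sum_kH_{ik}\bar H_{jk},$$
which vanishes by row orthogonality when $i\neq j$ and is zero when $i=j$. A short inspection shows that $\ker\Phi$ consists of the one-parameter family $(a,b)=(c,\ldots,c;-c,\ldots,-c)$ with $c\in\mathbb{R}$, so $\dim\mathrm{Im}(\Phi)=M+N-1$, giving $d(H)\geq M+N-1$.

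For part (2), I would note that $\mathrm{Im}(\Phi)$ is precisely the trivial tangent cone $T_H^\times C_{M,N}$: differentiating the trivial deformation $f_{ij}(q)=H_{ij}q^{a_i+b_j}$ at $q=1$ produces exactly the matrix $A=\Phi(a,b)$. Under the hypothesis $d(H)=M+N-1$, the chain of inclusions $T_H^\times C_{M,N}\subseteq T_HC_{M,N}\subseteq\widetilde{T}_HC_{M,N}$ collapses to equalities by a dimension count. The dephasing map $C_{M,N}\to D_{M,N}$ is the quotient by the $(\mathbb{T}^M\times\mathbb{T}^N)/\mathbb{T}$-action whose orbit through $H$ has tangent space exactly $T_H^\times C_{M,N}$, so at the image point $H'\in D_{M,N}$ the induced enveloping tangent space is the quotient $\widetilde{T}_HC_{M,N}/T_H^\times C_{M,N}$, which vanishes. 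Applying the criterion of \cite{tz2}---that a dephased point with no non-trivial order-1 deformations must be isolated---one concludes that $H'$ is isolated.

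The main obstacle is the final step in part (2): the passage from \emph{the dephased enveloping tangent space vanishes} to \emph{$H'$ is isolated in $D_{M,N}$} is not a formal consequence of the linear-algebraic setup and relies on the Tadej--\.Zyczkowski analysis of how the defining algebraic equations of $C_{M,N}$ control the local geometry. In the rectangular setting this requires an adaptation of \cite{tz2}, but since Proposition 1.5(1) already furnishes the correct order-1 system for general $M\leq N$, the adaptation is expected to go through routinely.
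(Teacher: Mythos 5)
Your proof is correct and follows essentially the same route as the paper. In part (1) you make explicit the map $\Phi:\mathbb{R}^M\oplus\mathbb{R}^N\to M_{M\times N}(\mathbb{R})$ and its one-dimensional kernel, which is exactly the bookkeeping behind the paper's one-line claim $\dim(T_H^\times C_{M,N})=M+N-1$, and your verification that $\mathrm{Im}(\Phi)\subset\widetilde{T}_HC_{M,N}$ via row orthogonality is the same computation the paper tacitly uses. In part (2) you and the paper both start from the dimension-count collapse $T_H^\times C_{M,N}=T_HC_{M,N}=\widetilde{T}_HC_{M,N}$; the paper then concludes directly from $T_HC_{M,N}=T_H^\times C_{M,N}$, while you pass through the dephased quotient and invoke the Tadej--\.Zyczkowski order-1 criterion explicitly. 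These are two phrasings of the same argument, and your closing caveat --- that the final implication ``no non-trivial order-1 deformations $\Rightarrow$ isolated'' is not purely linear algebra but rests on the real-algebraic analysis in \cite{tz2} --- applies equally to the paper's terse statement; the paper simply defers this to the square-case literature.
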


\begin{proof}
Once again, these are well-known facts from the square case, which extend:

(1) We have indeed $\dim(T_H^\times C_{M,N})=M+N-1$, and since the tangent vectors to these trivial deformations belong to $\widetilde{T}_HC_{M,N}$, this gives the result.

(2) Since $d(H)=M+N-1$, the inclusions $T_H^\times C_{M,N}\subset T_HC_{M,N}\subset\widetilde{T}_HC_{M,N}$ must be equalities, and from $T_HC_{M,N}=T_H^\times C_{M,N}$ we obtain the result.
\end{proof}

Finally, still at the theoretical level, we have the following conjecture:

\begin{conjecture}[Isolation]
An isolated matrix $H\in D_{M,N}$ must have minimal defect, namely $d(H)=M+N-1$.
\end{conjecture}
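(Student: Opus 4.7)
The plan is to proceed by contraposition: assuming $d(H) > M+N-1$, we attempt to construct an actual non-trivial smooth deformation of $H$, contradicting isolation inside $D_{M,N}$. Since $\dim T_H^\times C_{M,N} = M+N-1$ by Proposition 1.7, the hypothesis provides some $A \in \widetilde{T}_H C_{M,N}$ that is not of the trivial form $A_{ij} = a_i + b_j$, and by Proposition 1.5(1) this $A$ automatically satisfies the linearized orthogonality equations $\sum_k H_{ik}\bar{H}_{jk}(A_{ik}-A_{jk}) = 0$. The goal is to upgrade this ``formal'' first-order datum into a genuine smooth path in $C_{M,N}$ based at $H$, whose tangent vector is non-trivial.

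The most natural first attempt is the affine ansatz $f_{ij}(q) = H_{ij} q^{A_{ij}}$ of Definition 1.3(2). Its membership in $C_{M,N}$ reduces to $\sum_k H_{ik}\bar{H}_{jk} q^{A_{ik}-A_{jk}} = 0$ for all $q \in \mathbb T_1$ and all $i \neq j$. Writing $q = e^{it}$ and expanding in $t$, the order-one term reproduces exactly the conditions verified above, while the higher-order terms yield the infinite tower of obstructions $\sum_k H_{ik}\bar{H}_{jk} (A_{ik}-A_{jk})^n = 0$ for $n \geq 2$, which are generally not automatic. Under the standing assumption of the paper that the combinatorics of $H$ comes from vanishing sums of roots of unity, one can hope to exploit the rigidity of such sums to reduce this infinite family to a finite one, and then to show that the coset $A + T_H^\times C_{M,N}$ contains a representative satisfying all of them, thereby producing the desired affine deformation.

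If no such affine representative exists, the fallback is to seek a smooth deformation of the form $f_{ij}(e^{it}) = H_{ij} \exp\bigl(i(t A_{ij} + t^2 B^{(2)}_{ij} + t^3 B^{(3)}_{ij} + \cdots)\bigr)$ with real matrices $B^{(k)}$ determined recursively, together with an implicit function argument ensuring convergence. The main obstacle, and the reason the statement is genuinely a conjecture, is precisely that at each order one faces a linear system in $B^{(k)}$ whose right-hand side depends on $A, B^{(2)}, \ldots, B^{(k-1)}$, and there is no evident mechanism forcing the right-hand side to lie in the image; a priori the enveloping tangent space $\widetilde{T}_H C_{M,N}$ may strictly contain the true tangent cone $T_H C_{M,N}$. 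Ruling this out amounts to a smoothness statement for the real algebraic variety $C_{M,N}$ at isolated points, which is where the real difficulty lies. A reasonable intermediate target, in line with the rest of the paper, would be to establish the conjecture under a regularity hypothesis in the sense of \cite{bbs}, or for matrices whose vanishing-sum combinatorics is controlled by a specific tractable class, where the obstruction theory can be carried out explicitly.
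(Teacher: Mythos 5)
You are attempting to prove what is, in the paper, Conjecture 1.8: the paper offers no proof of it, only the elementary converse (Proposition 1.7\,(2): minimal defect implies isolation), together with evidence from the known low-dimensional classifications and from the Nicoara--White result \cite{nwh} that for Fourier matrices all first-order deformations come from one-parameter families. So there is no argument in the paper to compare yours against; the only question is whether your outline closes the conjecture, and it does not.

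The gap is the one you name yourself, and it is the entire content of the statement. From $d(H)>M+N-1$ you correctly extract a non-trivial $A\in\widetilde{T}_HC_{M,N}$, but $\widetilde{T}_HC_{M,N}$ is merely the intersection $T_HM_{M\times N}(\mathbb T)\cap T_H\sqrt{N}U_{M,N}$ of tangent spaces of the two enveloping smooth manifolds; nothing guarantees that $A$, or any representative of $A+T_H^\times C_{M,N}$, is tangent to an actual curve in $C_{M,N}$, i.e.\ that the inclusion $T_HC_{M,N}\subset\widetilde{T}_HC_{M,N}$ is large enough to contain a non-trivial vector. Your affine ansatz requires the infinite tower $\sum_kH_{ik}\bar{H}_{jk}(A_{ik}-A_{jk})^n=0$ for all $n\geq2$, and the appeal to vanishing sums of roots of unity is only a hope, not an argument (moreover Conjecture 1.8 is stated for arbitrary $H\in D_{M,N}$, with no Butson or regularity hypothesis, so even a successful argument in that restricted setting would prove a strictly weaker statement, closer in spirit to Conjecture 2.6). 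Your recursive scheme with correction terms $B^{(k)}$ stalls exactly where you say it does: at each order one needs the inhomogeneous term to lie in the image of the relevant linearized operator, and an implicit function theorem would require a surjectivity or smoothness statement for the real algebraic set $C_{M,N}$ at $H$ which is precisely what is unknown. In short, your text is a fair diagnosis of why this is an open conjecture, but it is not a proof, and no intermediate case is actually carried out.
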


In other words, the conjecture is that if $H\in C_{M,N}$ has only trivial first order deformations, then it has only trivial deformations at any order, including  at $\infty$. 

In the square matrix case this statement comes with solid evidence, all the known examples of complex Hadamard matrices $H\in C_N$ having non-minimal defect being known to admit one-parameter deformations. For more on this subject, see \cite{tz1}, \cite{tz2}.

The above statement is actually expected to be part of a more general phenomenon, in view of the recent results in \cite{nwh}, which show that for the Fourier matrix $F_N$, the first order deformations all come from one-parameter deformations. Thus, there should be a stronger conjecture here, covering as well certain non-isolated matrices.

Let us discuss now some examples of isolated Hadamard matrices, and provide some evidence for Conjecture 1.8. We are interested in the following matrices:

\begin{definition}
The truncated Fourier matrix $F_{S,G}$, with $G$ being a finite abelian group, and with $S\subset G$ being a subset, is constructed as follows:
\begin{enumerate}
\item Given $N\in\mathbb N$, we set $F_N=(w^{ij})_{ij}$, where $w=e^{2\pi i/N}$.

\item Assuming $G=\mathbb Z_{N_1}\times\ldots\times\mathbb Z_{N_s}$, we set $F_G=F_{N_1}\otimes\ldots\otimes F_{N_s}$.

\item We let $F_{S,G}$ be the submatrix of $F_G$ having $S\subset G$ as row index set. 
\end{enumerate}
\end{definition}

Observe that $F_N$ is the Fourier matrix of the cyclic group $\mathbb Z_N$. More generally, $F_G$ is the Fourier matrix of the finite abelian group $G$. Observe also that $F_{G,G}=F_G$.

We can compute the defect of $F_{S,G}$ by using Proposition 1.5, and we obtain:

\begin{proposition}
For a truncated Fourier matrix $F=F_{S,G}$ we have the formula
$$\widetilde{T}_FC_{M,N}=\left\{A\in M_{M\times N}(\mathbb R)\Big|P=AF^t\ {\rm satisfies}\ P_{ij}=P_{i+j,j}=\bar{P}_{i,-j},\forall i,j\right\}$$
where $M=|S|,N=|G|$, and with all the indices being regarded as group elements. 
\end{proposition}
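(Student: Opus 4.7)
The plan is to apply Proposition 1.5(1) to $F = F_{S,G}$ and exploit the character structure of the Fourier matrix. Writing the Fourier entries via the bilinear character pairing on $G$, one obtains the key multiplicative identity $F_{ik}\bar{F}_{jk} = F_{i-j,k}$, valid for $i,j \in S$ and $k \in G$, with all arithmetic taking place in $G$. Substituting this into the defining condition from Proposition 1.5(1) collapses each side of the difference into a clean sum, yielding
\[
\sum_k F_{i-j,k}(A_{ik} - A_{jk}) = 0, \qquad \forall\, i, j \in S.
\]

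Next I would recognize the two halves of this sum as entries of $P = AF^t$. Since $F_G$ is symmetric (the character pairing is symmetric in its two arguments), we have $F^t = F$, so $P_{ab} = \sum_k A_{ak} F_{bk}$ for $a \in S$, $b \in G$. The two sums above then read $P_{i,i-j}$ and $P_{j,i-j}$ respectively, and the condition becomes $P_{i,i-j} = P_{j,i-j}$ for all $i,j \in S$. Re-parametrizing by $a = j \in S$ and $b = i - j \in G$, so that $i = a+b \in S$, this rewrites as the claimed identity $P_{ab} = P_{a+b,b}$, with the implicit constraint $a, a+b \in S$ that the phrase ``with all the indices being regarded as group elements'' is meant to convey.

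For the remaining equality $P_{ij} = \bar{P}_{i,-j}$, I would observe that it is automatic from the reality of $A$, since conjugation of a character corresponds to negation of its argument:
\[
\bar{P}_{i,-j} \;=\; \sum_k A_{ik}\,\bar{F}_{k,-j} \;=\; \sum_k A_{ik}\, F_{k,j} \;=\; P_{ij}.
\]
Thus this condition carries no extra content beyond $A \in M_{M\times N}(\mathbb R)$, and is included only to describe $P$ uniformly (indeed, viewed as a condition on an \emph{a priori} complex matrix $P$, it is exactly what pins down the reality of the inverse Fourier transform $A = \tfrac{1}{N} P \bar F$).

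The only point requiring genuine care is the bookkeeping between indices in $S$ and indices in $G$: one must verify that the reparametrization $(i,j) \in S\times S \longleftrightarrow (a,b)$ with $a, a+b \in S$ captures exactly the equations produced by Proposition 1.5(1), without over- or under-counting (the redundancy $(i,j)\leftrightarrow(j,i)$ corresponds to $b \leftrightarrow -b$, so no information is lost when quotienting). I do not anticipate a deeper obstacle; once the character identity $F_{ik}\bar{F}_{jk} = F_{i-j,k}$ is in hand, the proof is a direct rewriting.
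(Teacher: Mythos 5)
Your proof is correct and follows essentially the same route as the paper's: apply Proposition 1.5(1), use the character identity $F_{ik}\bar{F}_{jk}=(F^t)_{k,i-j}$ to rewrite the defect equations as $P_{i,i-j}=P_{j,i-j}$ with $P=AF^t$, reparametrize to obtain $P_{ij}=P_{i+j,j}$, and note that $\bar{P}_{i,-j}=P_{ij}$ is automatic from the reality of $A$. The only cosmetic slip is the parenthetical about quotienting by $(i,j)\leftrightarrow(j,i)$: the reparametrization $(i,j)\mapsto(j,i-j)$ is already a bijection onto the admissible index pairs (the swap corresponds to $(a,b)\mapsto(a+b,-b)$, a different equation), so no quotient is needed, but this does not affect the argument.
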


\begin{proof}
We use Proposition 1.5 (1). The equations there are $\sum_kF_{ik}\bar{F}_{jk}(A_{ik}-A_{jk})=0$, and since for $F=F_{S,G}$ we have $F_{ik}\bar{F}_{jk}=(F^t)_{k,i-j}$, we obtain:
$$\widetilde{T}_FC_{M,N}=\left\{A\in M_{M\times N}(\mathbb R)\big|(AF^t)_{i,i-j}=(AF^t)_{j,i-j},\forall i,j\right\}$$

Now observe that for an arbitrary matrix $P\in M_M(\mathbb C)$, we have:
$$P_{i,i-j}=P_{j,i-j},\forall i,j\iff P_{i+j,i}=P_{ji},\forall i,j\iff P_{i+j,j}=P_{ij},\forall i,j$$

We therefore conclude that we have the following equality:
$$\widetilde{T}_FC_{M,N}=\left\{A\in M_{M\times N}(\mathbb R)\big|
P=AF^t\ {\rm satisfies}\ P_{ij}=P_{i+j,j},\forall i,j\right\}$$

Now observe that with $A\in M_{M\times N}(\mathbb R)$ and $P=AF^t\in M_M(\mathbb C)$ as above, we have:
$$\bar{P}_{ij}=\sum_kA_{ik}(F^*)_{kj}=\sum_kA_{ik}(F^t)_{k,-j}=P_{i,-j}$$

Thus, we obtain the formula in the statement, and we are done. 
\end{proof}

As an illustration, in the square matrix case we obtain the following formulae, known since the work of Tadej and \.Zyczkowski \cite{tz2}:

\begin{proposition}
For a Fourier matrix $F=F_G$, we have:
\begin{enumerate}
\item $\widetilde{T}_FC_N=\{PF^*|P_{ij}=P_{i+j,j}=\bar{P}_{i,-j}\}$.

\item $d(F)=\sum_{g\in G}[G:<g>]$.

\item $d(F)$ is also the number of $1$ entries of $F$.

\item For $G=\mathbb Z_N$ with $N=\prod_ip_i^{a_i}$ we have $d(F)=N\prod_i(1+a_i-\frac{a_i}{p_i})$.
\end{enumerate}
\end{proposition}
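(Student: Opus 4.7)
The plan is to treat the four claims in turn, leveraging Proposition 1.10 and the special structure of Fourier matrices.

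For part (1), I would specialize Proposition 1.10 to the square case $M=N$ with $H=F_G$. Since $F_G=F_{N_1}\otimes\cdots\otimes F_{N_s}$ and each $F_{N_k}$ is visibly symmetric, we have $F^t=F$, and the Fourier unitarity $FF^*=NI$ lets me invert the relation $P=AF^t=AF$ as $A=PF^*/N$. The defining conditions on $P$ are linear and homogeneous, hence scale invariant, so the set of matrices $PF^*/N$ coincides with $\{PF^*\,|\,P\text{ admissible}\}$, giving the claimed description.

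For part (2), I would count the real dimension of the set of admissible $P$ column by column. The first condition $P_{ij}=P_{i+j,j}$ says that, for each fixed $j$, the column $P(\cdot,j)$ is constant on cosets of $\langle j\rangle$ in $G$, hence lives in a complex vector space of dimension $[G:\langle j\rangle]$. The second condition $P_{ij}=\bar P_{i,-j}$ pairs column $j$ with column $-j$ via conjugation. I would split the column indices into self-paired ones (where $j=-j$, so that the column is forced to be real-valued, contributing $[G:\langle j\rangle]$ real parameters) and mutually-paired ones (where $j\neq -j$, so that one complex column determines its mate, contributing $2[G:\langle j\rangle]$ real parameters per pair). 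Regrouping as an unrestricted sum over $j\in G$ then yields $d(F)=\sum_{j\in G}[G:\langle j\rangle]$.

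For part (3), I would realize $F_G$ as the character table by writing $(F_G)_{gh}=\chi_h(g)$ under the identification $G\cong\widehat G$. For each fixed $h$, the $1$ entries of column $h$ correspond to $g\in\ker\chi_h$, and since $G/\ker\chi_h$ is cyclic of order $|\langle h\rangle|$, one has $|\ker\chi_h|=[G:\langle h\rangle]$. Summing over $h$ recovers $d(F)$. For part (4), I would specialize (2) to $G=\mathbb Z_N$, where $[G:\langle j\rangle]=\gcd(N,j)$, so $d(F)=\sum_{j\in\mathbb Z_N}\gcd(N,j)=\sum_{d\mid N}d\,\phi(N/d)$. This last sum is multiplicative in $N$ by the Chinese Remainder Theorem, so a direct computation at prime powers $N=p^a$ gives $\sum_{k=0}^{a}p^k\phi(p^{a-k})=ap^{a-1}(p-1)+p^a=p^a(1+a-a/p)$, and multiplying over the prime factorization of $N$ yields the stated formula.

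The main obstacle is likely the bookkeeping in part (2): one must handle the conjugation pairing between columns $j$ and $-j$ with care at the $2$-torsion of $G$ (where $j=-j$), since a careless count would either double-count or overlook the reality constraint imposed there.
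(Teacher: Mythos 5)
Your proposal is correct and follows essentially the same route as the paper: part (1) via the bijectivity of $A\mapsto P=AF^t$ from Proposition 1.10, part (2) by counting columns constant on cosets of $\langle j\rangle$ with the conjugation pairing $j\leftrightarrow -j$ handled at the $2$-torsion (the paper packages this as the isomorphism with $\bigoplus_{g\in G}C(G/\langle g\rangle,\mathbb R)$ via the partition $G=G_2\sqcup X\sqcup(-X)$), and parts (3)--(4) by the kernel-of-character count and the multiplicative $\sum_{d\mid N}d\,\phi(N/d)$ computation, which the paper only sketches. No gaps; your explicit treatment of the reality constraint at $j=-j$ is exactly the bookkeeping the paper's identification encodes.
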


\begin{proof}
We use the formula in Proposition 1.10, with $S=G$:

(1) This is clear, because the correspondence $A\to P=AF^t$ is now bijective.

(2) The proof here uses an identification of real vector spaces, as follows:
$$\{P\in M_N(\mathbb C)|P_{ij}=P_{i+j,j}=\bar{P}_{i,-j}\}\simeq\bigoplus_{g\in G}C(G/<g>,\mathbb R)$$

Indeed, if we let $G_2=\{g\in G|2g=0\}$, and then choose a partition of type $G=G_2\sqcup X\sqcup(-X)$, the formula giving the above identification is $P=\oplus P_g$, with:
$$P_g(i)=
\begin{cases}
P_{ij}&(j\in G_2)\\
Re(P_{ij})&(j\in X)\\
Im(P_{ij})&(j\in -X)
\end{cases}$$

With this identification in hand, the result follows from (1).

(3) This observation follows from (2), and from the definition of $F$.

(4) This formula, due to Tadej and \.Zyczkowski, follows from (2) or (3).
\end{proof}

Now back to our isolation questions, as a first, well-known result, we have:

\begin{proposition}
The Fourier matrix $F_G$ of a finite abelian group $G$ is isolated precisely when we have $G=\mathbb Z_p$, with $p$ prime.
\end{proposition}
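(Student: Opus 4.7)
The plan is to prove both directions by exploiting the explicit defect computation from Proposition 1.11 and the isolation criterion of Proposition 1.7.

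For the forward direction ($G = \mathbb{Z}_p$ prime $\Rightarrow$ $F_G$ is isolated), specialize Proposition 1.11(4) to $N = p$: with $a_1 = 1$ we read off $d(F_p) = p(1 + 1 - 1/p) = 2p - 1$, which matches exactly the lower bound $M + N - 1 = 2p - 1$ of Proposition 1.7(1). Then Proposition 1.7(2) immediately yields isolation of $F_p$ inside the dephased quotient $D_p$.

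For the reverse direction, suppose $G \not\cong \mathbb{Z}_p$ for any prime $p$, and exhibit a non-trivial deformation of $F_G$. Apply the structure theorem to write $G \cong \mathbb{Z}_{p_1^{a_1}} \times \cdots \times \mathbb{Z}_{p_s^{a_s}}$, and split into two cases. If $s \geq 2$, then $G$ admits a non-trivial product decomposition $G \cong H_1 \times H_2$ with $|H_1|,|H_2| \geq 2$, and $F_G = F_{H_1} \otimes F_{H_2}$. The Di\c{t}\u{a} deformation
$$(F_{H_1} \otimes_Q F_{H_2})_{(i_1,i_2),(j_1,j_2)} \;=\; Q_{i_1,j_2}\,(F_{H_1})_{i_1,j_1}\,(F_{H_2})_{i_2,j_2}, \qquad Q \in M_{|H_1|\times|H_2|}(\mathbb{T}),$$
is complex Hadamard for every $Q$ (a direct row-orthogonality check: the $j_1$-sum kills pairs with $i_1\neq i_1'$, and when $i_1=i_1'$ the $Q$-factors cancel since $|Q_{i_1,j_2}|=1$). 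This yields an $(|H_1|\,|H_2|)$-parameter family passing through $F_G$ at $Q = J$, which, modulo the trivial row/column rescalings, is visibly non-trivial.

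The remaining case is $s = 1$ with $a_1 \geq 2$, i.e.\ $G = \mathbb{Z}_{p^a}$ with $a \geq 2$. Here $G$ admits no non-trivial product decomposition, so the Di\c{t}\u{a} argument does not apply directly. The cleanest route is to invoke the Nicoara--White theorem \cite{nwh}: for the cyclic Fourier matrix $F_N$, every first-order deformation integrates to an actual smooth deformation. Proposition 1.11(4) gives $d(F_{p^a}) = p^a(1 + a - a/p) > 2p^a - 1$ for $a \geq 2$, so the space of first-order deformations strictly contains the trivial ones, and Nicoara--White upgrades this to a genuine non-trivial deformation. Alternatively one can cite the explicit one-parameter families in the Tadej--\.Zyczkowski catalogue \cite{tz1} (for instance the standard circulant family at $F_4$ and its analogues).

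The main obstacle is this prime-power cyclic case: the Di\c{t}\u{a} construction handles every decomposable $G$ transparently, but for $G = \mathbb{Z}_{p^a}$ one must either import an external result (Nicoara--White, or an explicit catalogue family) or construct the deformation by hand. In contrast, the forward direction is essentially automatic from the Proposition 1.7 / Proposition 1.11 machinery already established.
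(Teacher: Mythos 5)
Your proof is correct, and it is in fact \emph{more} careful than the paper's own proof, which is a one-liner: ``this follows from the formulae found in Proposition~1.11.'' That terse appeal to the defect formula rigorously handles only the forward direction, $G=\mathbb Z_p \Rightarrow F_G$ isolated, exactly as you do via $d(F_p)=2p-1$ and Proposition~1.7(2). For the converse, knowing $d(F_G)>2|G|-1$ shows only that the enveloping tangent space is too big; concluding non-isolation from that is precisely Conjecture~1.8, which the paper does not prove. So the paper is implicitly leaning on the well-known existence of one-parameter families through composite-order Fourier matrices, while you correctly flag this gap and fill it explicitly.

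Where you differ, and where you slightly overshoot, is the treatment of $G=\mathbb Z_{p^a}$, $a\geq 2$. You say the Di\c t\u a argument ``does not apply directly'' because the group is indecomposable and then reach for Nicoara--White. This works, but it is heavier than needed: the Di\c t\u a deformation is a statement about the \emph{matrix}, not the group, and via the Cooley--Tukey factorization $F_{mn}$ is Hadamard-equivalent to a matrix of the form $F_m\otimes_Q F_n$ for every composite $mn$, prime powers included. (The paper's own Proposition~2.8, where $F_4$ sits on the family $F_{2,2}^q$, is exactly the $p^a=4$ instance of this.) So a single Di\c t\u a/Cooley--Tukey argument handles all composite $N$ uniformly, with no case split and no analytic black box. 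One small bookkeeping slip in the decomposable case: the family $Q\in M_{|H_1|\times|H_2|}(\mathbb T)$ has $(|H_1|-1)(|H_2|-1)$ effective parameters after quotienting by the trivial row/column rescalings, not $|H_1|\,|H_2|$; since $|H_1|,|H_2|\geq 2$ this is still $\geq 1$, so non-triviality holds, but the count as stated is off.
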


\begin{proof}
This follows indeed from the formulae found in Proposition 1.11 above.
\end{proof}

Let us restrict now the attention to the purely rectangular case, $M<N$, and try to find some explicit examples here, of truncated Fourier type. For this purpose, we can use the following slightly improved version of Proposition 1.10 above:

\begin{theorem}
The defect of $F=F_{S,G}$ is the number $\dim(K)+\dim(I)$, where
\begin{eqnarray*}
K&=&\left\{A\in M_{M\times N}(\mathbb R)\Big|AF^t=0\right\}\\
I&=&\left\{P\in L_M\Big|\exists A\in M_{M\times N}(\mathbb R),P=AF^t\right\}
\end{eqnarray*}
where $L_M=\left\{P\in M_M(\mathbb C)\big|P_{ij}=P_{i+j,j}=\bar{P}_{i,-j},\forall i,j\right\}$. 
\end{theorem}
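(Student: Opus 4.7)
The plan is to view Theorem 1.13 as a straightforward rank-nullity computation applied to the linear map already identified in Proposition 1.10. Concretely, define the real-linear map
$$\Phi:M_{M\times N}(\mathbb R)\to M_M(\mathbb C),\qquad \Phi(A)=AF^t.$$
Proposition 1.10 rewrites the enveloping tangent space as the preimage
$$\widetilde T_FC_{M,N}=\Phi^{-1}(L_M),$$
where $L_M$ is the real subspace of $M_M(\mathbb C)$ singled out in the statement.

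From here the argument is essentially bookkeeping. First I would restrict $\Phi$ to the subspace $\widetilde T_FC_{M,N}$, obtaining a real-linear map
$$\Phi|_{\widetilde T_FC_{M,N}}:\widetilde T_FC_{M,N}\longrightarrow L_M.$$
The image of this restricted map is, by definition, exactly the space $I$ in the statement. For the kernel, note that $K=\ker\Phi$ is automatically contained in $\widetilde T_FC_{M,N}$, since $0\in L_M$ so every $A$ with $AF^t=0$ lies in the preimage $\Phi^{-1}(L_M)$; hence the kernel of the restricted map is precisely $K$.

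Applying the rank-nullity theorem to $\Phi|_{\widetilde T_FC_{M,N}}$ then gives
$$d(F)=\dim\widetilde T_FC_{M,N}=\dim(\ker)+\dim(\mathrm{im})=\dim(K)+\dim(I),$$
which is the claimed formula. I do not expect a real obstacle here: the only point that requires a sentence of justification is the observation $K\subset\widetilde T_FC_{M,N}$, which is immediate. The value of the statement, as suggested by the phrasing ``slightly improved version'', lies not in the difficulty of the proof but in splitting the defect into a kernel contribution (solutions of $AF^t=0$, which are rather combinatorial) and an image contribution inside the concretely described space $L_M$; this decomposition is what makes the formula practical for the purely rectangular examples that will follow.
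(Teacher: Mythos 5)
Your proposal is correct and is essentially the paper's own argument: both apply rank-nullity to the map $\Phi(A)=AF^t$ restricted to $\widetilde{T}_FC_{M,N}$, identifying the kernel with $K$ and the image with $I$ via Proposition 1.10. The small observations you flag (that $K\subset\widetilde{T}_FC_{M,N}$, and that any real $A$ with $AF^t\in L_M$ already lies in $\widetilde{T}_FC_{M,N}$, so the image of the restriction is all of $I$) are exactly what the paper leaves implicit, so there is nothing to add.
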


\begin{proof}
We use Proposition 1.10. With the notations there, and with $L_M$ being as above, we have a linear map $\Phi:\widetilde{T}_FC_{M,N}\to L_M$ given by $\Phi(A)=AF^t$, which gives:
$$\dim(\widetilde{T}_FC_{M,N})=\dim(\ker\Phi)+\dim({\rm Im}\,\Phi)$$

Now since the spaces on the right are precisely those in the statement, $\ker\Phi=K$ and ${\rm Im}\, \Phi=I$, by applying Proposition 1.10 we obtain the result.
\end{proof}

In order to look now for isolated matrices, the first remark is that since a deformation of $F_G$ will produce a deformation of $F_{S,G}$ too, we must  restrict the attention to the case where $G=\mathbb Z_p$, with $p$ prime. And here, we have the following conjecture:

\begin{conjecture}[Truncation]
There exists a constant $\varepsilon>0$ such that $F_{S,p}$ is isolated, for any $p$ prime, once $S\subset\mathbb Z_p$ satisfies $|S|\geq(1-\varepsilon)p$.
\end{conjecture}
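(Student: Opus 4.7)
The plan is to use Proposition 1.10 to rewrite the isolation condition as a combinatorial connectedness property of $S$, and then to establish it under $|S|\ge(1-\varepsilon)p$. Switching to Fourier coordinates $\hat A_i(m)=\sum_kA_{ik}w^{mk}$, the linearized conditions of Proposition 1.10 read $\hat A_i(m)=\hat A_j(m)$ for every $i,j\in S$ with $m=i-j$. Fixing a nonzero $m$, these define an equivalence relation on $S$, namely $i\sim i-m$ whenever $i-m\in S$; letting $\rho_m(S)$ denote the number of classes, a real-parameter count yields
\[
d(F_{S,p})=|S|+2\sum_{m=1}^{(p-1)/2}\rho_m(S).
\]
By Proposition 1.7 (2), isolation follows once $d(F_{S,p})=|S|+p-1$, which since $\rho_m(S)\ge 1$ is equivalent to $\rho_m(S)=1$ for every nonzero $m$: the complement $S^c$ must appear as a single arc in every cyclic ordering of $\mathbb Z_p$ obtained by iterated addition of $m$.

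The task then reduces to forcing this ``connectedness in every direction'' once $|S|\ge(1-\varepsilon)p$. The natural tool is additive combinatorics on $\mathbb Z_p$: since $p$ is prime, every nonzero $m$ generates $\mathbb Z_p$ transitively, and a small $S^c$ should not be able to split into many arcs under every generator simultaneously. I would combine this with the double-counting identity $\sum_{m=1}^{p-1}\rho_m(S)=|S|(p-|S|)$, using the smallness of $|S^c|$ to distribute the connectivity defect across the various $m$, and with the observation that each individual $\rho_m(S)$ is bounded above by $|S^c|$.

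The main obstacle is that this same double-counting identity shows the first-order criterion is extremely tight: together with the requirement $\sum_{m=1}^{(p-1)/2}\rho_m(S)=(p-1)/2$ it forces $|S|(p-|S|)=p-1$, and hence $|S|\in\{1,p-1,p\}$. So at first order the argument only gives $\varepsilon<2/p$, not a uniform constant. Producing a fixed $\varepsilon>0$ almost certainly requires a higher-order analysis: one must show that whenever $d(F_{S,p})>M+N-1$, the extra tangent directions in $\widetilde T_FC_{M,N}$ are obstructed by one of the polynomial conditions $\sum_k w^{(i-j)k}(A_{ik}-A_{jk})^n=0$ (for some $n\ge 2$) that any affine deformation must satisfy. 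This last step reduces to a statement about vanishing sums of $p$-th roots of unity in the spirit of the rest of the paper, and is where I expect the real difficulty to lie.
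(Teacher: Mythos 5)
What you are addressing here is Conjecture 1.14, which the paper leaves open: right after stating it, the authors only remark that Theorem 1.13 gives evidence for $F_{p-1,p}$ and (for large $p$) for $F_{p-2,p}$, and that ``finding a number $\varepsilon>0$ as above looks like a quite difficult question''. So there is no proof in the paper to compare with, and your proposal does not supply one either -- as you yourself admit in the last sentence. What you do compute is essentially correct and genuinely useful: passing to the Fourier coefficients $\hat A_i(m)$ turns the equations of Proposition 1.5/1.10 into $\hat A_i(m)=\hat A_{i-m}(m)$ whenever $i,i-m\in S$, giving $d(F_{S,p})=|S|+2\sum_{m=1}^{(p-1)/2}\rho_m(S)$, and the double-counting identity $\sum_{m=1}^{p-1}\rho_m(S)=|S|\,(p-|S|)$ (valid for proper nonempty $S\subset\mathbb Z_p$) then shows that $d(F_{S,p})=|S|(p-|S|+1)$ depends only on $|S|$. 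This rigorously settles the case $|S|=p-1$ via Proposition 1.7 (2) -- sharper than the paper's ``evidence'' -- but it simultaneously proves that the minimal-defect route can never produce a fixed $\varepsilon>0$: already $|S|=p-2$ gives $d=3p-6>2p-3$. It is also worth flagging that this computation puts Conjecture 1.14 in tension with Conjecture 1.8 (isolated implies minimal defect): if both held, then for $p$ large the truncations with $|S|=p-2$ would be isolated with non-minimal defect, so something has to give; hiding this behind ``$\varepsilon<2/p$'' understates the point.

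The genuine gap is the entire final step. The ``higher-order analysis'' is invoked but never carried out, and even its formulation is insufficient: the conditions $\sum_k w^{(i-j)k}(A_{ik}-A_{jk})^n=0$, $n\ge2$, are the obstructions for \emph{affine} deformations $f_{ij}(q)=H_{ij}q^{A_{ij}}$ only, whereas isolation requires excluding all smooth deformations $f:\mathbb T_1\to(C_{M,N})_H$, and the inclusions $T_H^\circ C_{M,N}\subset T_HC_{M,N}\subset\widetilde T_HC_{M,N}$ of Definition 1.4 need not be equalities -- ruling out tangent directions of the affine cone says nothing, by itself, about curved deformations. Moreover you give no mechanism, uniform in $p$ and $S$, for showing that the $|S|(p-|S|)-(p-1)$ excess first-order directions are all obstructed; reducing this to vanishing sums of $p$-th roots of unity in the spirit of Section 2 is a reasonable program, but as written it is a restatement of the open problem, not a proof of it.
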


In principle this conjecture can be approached by using the formula in Theorem 1.13 above, and we have for instance evidence towards the fact that $F_{p-1,p}$ should be always isolated, that $F_{p-2,p}$ should be isolated too, provided that $p$ is big enough, and so on. However, finding a number $\varepsilon>0$ as above looks like a quite difficult question.

\section{Roots of unity, regularity}

We restrict now the attention to the case where the entries are roots of unity, or more generally to the case where the ``row combinatorics'' comes from the roots of unity.

In order to introduce these classes of matrices, we use:

\begin{definition}
A cycle is a full sum of roots of unity, possibly rotated by a scalar, and taken in a formal sense. A sum of cycles is a formal sum of cycles.
\end{definition}

The actual sum of a cycle, or of a sum of cycles, is of course 0. This is why the word ``formal'' is there, for reminding us that we are working with formal sums.

As an example, here is a sum of cycles, with $w=e^{2\pi i/6}$, and with $|q|=1$:
$$1+w^2+w^4+qw+qw^4=0$$

As a non-example now, here is a vanishing sum of roots of unity which is not a sum of cycles (at least with positive coefficients, as defined above), with $w=e^{2\pi i/30}$:
$$w^5+w^6+w^{12}+w^{18}+w^{24}+w^{25}=0$$

The following key result about cycles is due to Lam and Leung \cite{lle}:

\begin{proposition}
Let $s=p_1^{a_1}\ldots p_k^{a_k}$, and assume that $\lambda_i\in\mathbb Z_l$ satisfy $\lambda_1+\ldots+\lambda_N=0$.
\begin{enumerate}
\item $\sum\lambda_i$ is a sum of cycles, with $\mathbb Z$ coefficients.

\item If $k\leq 2$ then $\sum\lambda_i$ is a sum of cycles (with $\mathbb N$ coefficients).

\item At $k\geq 3$ then $\sum\lambda_i$ might not decompose as a sum of cycles.

\item $\sum\lambda_i$ has the same length as a sum of cycles: $N\in p_1\mathbb N+\ldots+p_k\mathbb N$.
\end{enumerate}
\end{proposition}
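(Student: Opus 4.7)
The natural setting is the group algebra $R=\mathbb Z[\mathbb Z_l]$, with $l=s=p_1^{a_1}\cdots p_k^{a_k}$, together with the ring homomorphism $\pi\colon R\to\mathbb Z[\zeta_l]$ sending a fixed generator to a primitive $l$-th root of unity. A formal sum of $l$-th roots of unity is a non-negative element of $R$, the vanishing condition $\sum\lambda_i=0$ becomes $\sum\lambda_i\in\ker\pi$, and a cycle is, by definition, an element of the form $g(1+h+h^2+\cdots+h^{p-1})\in R$ for some prime $p\mid l$, some $g\in\mathbb Z_l$, and some $h\in\mathbb Z_l$ of order $p$. The four parts of the statement then translate into structural assertions about $\ker\pi$ and about lifts of its elements to combinations of cycles.

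For (1), I would induct on the number $k$ of distinct primes dividing $l$. When $k=1$, so that $l=p^a$, a classical argument shows that $\ker\pi$ is the ideal generated by the single cyclotomic cycle $1+h^{p^{a-1}}+\cdots+h^{(p-1)p^{a-1}}$, and hence every element of $\ker\pi$ is a $\mathbb Z$-combination of cycles. For the inductive step, pick a prime $q\mid l$ and reduce modulo the ideal generated by the $q$-cycles; this annihilates the order-$q$ subgroup and produces a vanishing sum of $(l/q)$-th roots of unity, to which the inductive hypothesis applies. Lifting back and collecting the $q$-cycle corrections yields the $\mathbb Z$-decomposition.

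For (2), the case $k=1$ is immediate from the base step just described. The case $k=2$ is the classical R\'edei--de Bruijn--Schoenberg theorem for $l=p^aq^b$: using the identification $\mathbb Z_l\cong\mathbb Z_{p^a}\times\mathbb Z_{q^b}$ and the $\mathbb N$-positivity of the coefficients, one peels off $p$-cycles and $q$-cycles by induction on $a+b$, keeping all coefficients non-negative at each step. Part (3) is witnessed by the example $w^5+w^6+w^{12}+w^{18}+w^{24}+w^{25}=0$ displayed just before the proposition, since a direct enumeration of the cycles of lengths $2,3,5$ inside $\mathbb Z_{30}$ shows that no non-negative combination of them recovers this particular sum.

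The main obstacle is (4), where the length $N$ must be forced into the numerical monoid $p_1\mathbb N+\cdots+p_k\mathbb N$ even though (1) only yields a decomposition with possibly negative coefficients. My plan is to argue directly on the length, by induction on $k$ paralleling the induction in (1). Fix a prime $q\mid l$ and sort the $\lambda_i$ into cosets of the order-$q$ subgroup $\langle h\rangle\subset\mathbb Z_l$; cosets in which all $q$ elements appear with a common positive multiplicity contribute a multiple of $q$ to $N$ and can be peeled off as genuine $q$-cycles, while what remains, viewed in $\mathbb Z_{l/q}$, is a strictly smaller vanishing sum whose length lies in $\sum_{i:\,p_i\neq q}p_i\mathbb N$ by induction. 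Combining the two contributions yields $N\in p_1\mathbb N+\cdots+p_k\mathbb N$. The delicate point, and the one I expect to require most of the work, is arranging that the coset reduction can always be performed while preserving $\mathbb N$-coefficients; this is precisely where the Lam--Leung argument becomes subtle, and where the reduction must be carried out in $\mathbb Z[\zeta_l]$ rather than at the purely formal level.
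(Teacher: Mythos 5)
Your parts (1)--(3) are fine, and in fact go further than the paper, which simply declares the first three assertions elementary, points to \cite{lle} for (1)--(2), and exhibits the $l=30$ sum for (3); your group-algebra formulation (kernel of $\mathbb Z[\mathbb Z_l]\to\mathbb Z[\zeta_l]$ generated by the prime cycles, R\'edei--de Bruijn--Schoenberg for $k\leq2$) is the standard way to make that precise, and your verification of (3) by enumerating the $2$-, $3$- and $5$-cycles inside $\mathbb Z_{30}$ is exactly the intended check.

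The genuine gap is in (4), which is precisely the part the paper does not prove but attributes to Lam--Leung as ``quite a deep result''. Your proposed induction -- sort the $\lambda_i$ into cosets of the order-$q$ subgroup, peel off the cosets that occur with full positive multiplicity as genuine $q$-cycles, and treat the remainder as a shorter vanishing sum in $\mathbb Z_{l/q}$ -- fails already on the example you use for (3): for $w^5+w^6+w^{12}+w^{18}+w^{24}+w^{25}=0$ in $\mathbb Z_{30}$, no coset of the order-$2$, order-$3$ or order-$5$ subgroup is fully present, so there is nothing to peel, and yet the sum vanishes with length $6$. Moreover the leftover part is not a vanishing sum of $(l/q)$-th roots of unity in any direct sense, since the coset partial sums need not vanish individually; this is the same obstruction that makes (3) true. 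The actual Lam--Leung argument does not peel genuine cycles at all: writing $l=q^am$ with $(q,m)=1$, it expands the sum over the $\mathbb Z[\zeta_m]$-basis $1,\zeta_{q^a},\ldots,\zeta_{q^a}^{q^{a-1}(q-1)-1}$, uses the relation $\Phi_{q^a}(\zeta_{q^a})=0$ to control how coefficients from the missing powers redistribute, and then runs a delicate counting/induction on the number of prime factors to constrain the \emph{length} $N$ alone, without ever producing an $\mathbb N$-decomposition (which, by (3), cannot exist in general). So for (4) you should either cite \cite{lle}, as the paper does, or genuinely reproduce that argument; the coset-peeling plan as stated cannot be repaired into a proof.
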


\begin{proof}
The first three assertions are elementary, and the last one is from \cite{lle}:

(1) This is clear at $k=1$, and is quite elementary as well at $k=2$. See \cite{lle}.

(2) This is a well-known result, which follows from basic number theory.

(3) The simplest counterexample here is the one given above, with $w=e^{2\pi i/30}$.

(4) This is actually quite a deep result, due to Lam and Leung \cite{lle}.
\end{proof}

We can introduce the special classes of partial complex Hadamard matrices that we will be mostly interested in, as follows:

\begin{definition}
A  partial complex Hadamard matrix $H\in M_{M\times N}(\mathbb T)$ is called:
\begin{enumerate}
\item Of Butson type, if all the entries are roots of unity of finite order. 

\item Regular, if the scalar products between rows decompose as sums of cycles.
\end{enumerate}
\end{definition}

As usual, these notions are well-known in the square matrix case, with (1) being due to Butson \cite{but}, and with (2) being introduced in \cite{bbs}. We should mention that there is some notational clash here, with the notion of regularity, which is used in the purely algebraic literature in order to designate the bistochastic matrices. In this paper we use the above notion of regularity, and we call bistochastic the bistochastic matrices.

We have the following conjectural statement, from \cite{bbs}:

\begin{conjecture}[Regularity]
Any Butson matrix $H\in M_N(\mathbb C)$ is regular.
\end{conjecture}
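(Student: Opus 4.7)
The plan is to attack the conjecture by induction on the number of distinct prime factors of the order $l$ of the roots of unity appearing as entries of $H$. After dephasing so that the first row and column consist of $1$'s, each nontrivial row $H_i$ yields a vanishing sum $\sum_k H_{ik} = 0$ of $N$ roots of unity in $\mathbb Z_l$, and each pair of rows yields a vanishing sum $\sum_k H_{ik}\bar H_{jk} = 0$; the goal is to show that every such sum decomposes as a sum of cycles with $\mathbb N$ coefficients.

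By Proposition 2.2 (2) the statement is automatic when $l$ has at most two distinct prime factors, so the first reduction is to the case $l = p_1^{a_1} \cdots p_k^{a_k}$ with $k \geq 3$. A further reduction should bring $l$ to a squarefree value, by noting that the structure of vanishing sums over $\mathbb Z[\zeta_l]$ is controlled by that over $\mathbb Z[\zeta_{\mathrm{rad}(l)}]$, so that it suffices to treat $l = p_1 p_2 \cdots p_k$ with $k \geq 3$. In this remaining case Proposition 2.2 (3) warns us that an arbitrary vanishing sum need not be a sum of cycles, the smallest counterexample being the length-six Lam--Leung sum at $l = 30$ displayed above.

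The key idea is then to exploit the global structure of a Butson matrix, which distinguishes our setting from that of a single vanishing sum: the $\binom{N}{2}$ row-orthogonality relations should be coupled tightly enough to preclude the Lam--Leung pathologies from arising as actual scalar products. A natural tool is reduction modulo $(1 - \zeta_p)$ in $\mathbb Z[\zeta_l]$ for each prime $p \mid l$, which turns the vanishing sums into smaller vanishing sums in $\mathbb Z[\zeta_{l/p}]$ and allows one to track which ``cycle shadow'' is forced on each pair of rows by the joint orthogonality constraints. In parallel, one should use Proposition 2.2 (4) to constrain the length data $N \in p_1 \mathbb N + \cdots + p_k \mathbb N$ of each pairwise sum, which already limits the combinatorial types of vanishing patterns that can occur.

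The main obstacle will be precisely this last step: proving that coupled orthogonality rules out the pathological sums is a nontrivial structural result on syzygies among vanishing sums of roots of unity, and it is not obvious that it can be established without a new algebraic input, perhaps from the theory of ideals of relations in the group algebra $\mathbb Z[\mathbb Z_l]$, or from a careful analysis of how a single ``bad'' row could be completed to a full Butson matrix. As a feasible intermediate goal one could verify the conjecture by computer for small $(N, l)$ around the first problematic three-prime case, in the hope of extracting a concrete algebraic mechanism that could then be attempted in general.
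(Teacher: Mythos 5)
This is not a proof: it is a research plan whose decisive step is left open, and indeed you acknowledge this yourself. The statement is an open conjecture in the paper -- the authors state explicitly that they have no idea how to approach it, and offer only computer evidence -- so there is no proof of record to compare against; any complete argument here would be a genuinely new result. The heart of the matter is precisely the step you flag as ``the main obstacle'': showing that the coupled orthogonality relations of a full Butson matrix exclude Lam--Leung-type vanishing sums (such as the length-six sum at $l=30$) from occurring as scalar products of rows. That exclusion \emph{is} the conjecture, restated; the inductive frame, the dephasing, and the appeal to Proposition 2.2 (2) and (4) only dispose of the cases that were never in doubt (at most two prime factors, and length constraints), and do not supply any mechanism by which the $\binom{N}{2}$ orthogonality relations interact to forbid a bad sum in even one pairwise product.

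Two further points. First, the reduction to squarefree $l$ is asserted via the vague claim that vanishing sums over $\mathbb Z[\zeta_l]$ are ``controlled'' by those over $\mathbb Z[\zeta_{\mathrm{rad}(l)}]$; for $\mathbb Z$-coefficients this is essentially Proposition 2.2 (1), but for the $\mathbb N$-coefficient (cycle) decomposition that regularity requires, such a reduction is not automatic and would itself need an argument. Second, the proposed tool of reduction modulo $(1-\zeta_p)$ changes the coefficient ring and does not obviously preserve the distinction between cycle and non-cycle decompositions, so it is unclear that the ``cycle shadow'' it produces carries the information you need. In short, the proposal correctly identifies where the difficulty lies, but contributes no step toward resolving it; as a verification it must be judged as containing a gap coextensive with the conjecture itself.
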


In other words, the conjecture is that a ``tricky vanishing sum'' of roots of unity, like the $l=30$ one given after Definition 2.1 above, cannot be used in order to construct a complex Hadamard matrix. This is a quite difficult question, coming with substantial computer evidence. We have so far no idea on how to approach it. See \cite{bbs}.

In relation now with the geometric considerations from section 1 above, we have:

\begin{conjecture}[Deformation]
Any regular matrix $H\in M_N(\mathbb C)$ is an affine deformation of a Butson matrix.
\end{conjecture}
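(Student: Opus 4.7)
The plan is to use the cycle combinatorics of $H$ to extract a nearby Butson matrix and to interpret the difference as a valid affine tangent direction. Write $H_{ij}=e^{2\pi i\theta_{ij}}$ and fix, for each ordered row pair $(i,j)$, a decomposition of the vanishing sum $\sum_k H_{ik}\bar{H}_{jk}$ into cycles in the sense of Definition 2.1. Such a decomposition partitions the column index set into blocks $C$ of prime-power sizes $p_C$, together with a bijection $s_C:C\to\mathbb{Z}_{p_C}$ labelling the position of each column inside its cycle. The content of the regularity hypothesis then becomes the system of $\mathbb{Q}$-linear congruences
$$(\theta_{ik}-\theta_{jk})-(\theta_{ik'}-\theta_{jk'})\equiv\frac{s_C(k)-s_C(k')}{p_C}\pmod{\mathbb{Z}},$$
valid for every row pair $(i,j)$ and every pair $k,k'$ lying in a common cycle $C$.

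Next I would introduce the subset $\mathcal{X}\subset\mathbb{T}^{N^2}$ of matrices whose phase vector satisfies this same system. By construction $\mathcal{X}$ contains $H$, and any point of $\mathcal{X}$ is automatically complex Hadamard, since the row-pair scalar products split as $\sum_C e^{2\pi i\psi_C}\sum_{s\in\mathbb{Z}_{p_C}}e^{2\pi is/p_C}=0$. The connected components of $\mathcal{X}$ are cosets of a common real subtorus $V$; its lift $\tilde V\subset\mathbb{R}^{N^2}$ is cut out by the homogeneous versions of the above $\mathbb{Z}$-linear equations, hence is $\mathbb{Q}$-rational.

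The crucial step is to find a Butson matrix $B$ in the component of $\mathcal{X}$ containing $H$. Let $\tilde\theta\in\mathbb{R}^{N^2}$ be any lift of $H$'s phase vector, and let $L_\alpha$ denote the $\mathbb{Z}$-linear functionals on the left of the displayed congruences. Each $L_\alpha(\tilde\theta)$ differs from a rational number by an integer and is therefore rational; since the $L_\alpha$ generate the dual of $\mathbb{R}^{N^2}/\tilde V$, the class of $\tilde\theta$ in this quotient is rational with respect to its natural $\mathbb{Q}$-structure, and so admits a rational representative $\theta^B\in\mathbb{Q}^{N^2}$ with $A:=\tilde\theta-\theta^B\in\tilde V$. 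Let $B$ be the Butson matrix of phases $\theta^B$. The straight-line path $t\mapsto B_{ij}\,e^{2\pi itA_{ij}}$ then remains in $\mathcal{X}$ for every $t\in\mathbb{R}$, starts at $B$ when $t=0$, and reaches $H$ when $t=1$. Reading this path as $f(q)=Bq^A$ with $q=e^{2\pi it}$ exhibits $H$ on the trajectory of an affine deformation of $B$ that remains Hadamard to all orders, which proves the conjecture.

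The main obstacle I anticipate is the non-canonicity of the cycle decomposition: a single vanishing scalar product may admit inequivalent partitions into cycles, for instance whenever the same cycle size appears with multiplicity, and different choices lead to different subvarieties $\mathcal{X}$ and hence different Butson matrices $B$. Selecting a decomposition that genuinely reflects the algebraic structure of $H$, rather than an arbitrary refinement merely compatible with its phases, appears to be the central combinatorial issue, and may have to appeal to the Lam--Leung length constraint recalled in Proposition 2.2(4). A secondary worry is the reconciliation of the globally defined straight-line path above with the strictly local definition of affine deformation from Definition 1.3; this should however follow from the fact that the entire line stays in the partial Hadamard locus, so that restricting to $q\in\mathbb{T}_1$ produces a bona fide affine deformation in the paper's sense.
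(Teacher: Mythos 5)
You should first be aware that this statement is Conjecture 2.5 of the paper: the authors give no proof, only a remark on why the naive construction (``set all the $\lambda$-parameters multiplying the cycles equal to $1$'') is delicate, namely that neither the cycle decompositions nor the $\lambda$-parameters are unique. So there is no proof in the paper to compare yours with; I can only assess your argument on its own terms, and I do not find a gap in it. The steps all check: (i) once a cycle decomposition of each vanishing sum $\sum_kH_{ik}\bar{H}_{jk}$ is fixed, any phase lift $\tilde\theta$ of $H$ satisfies the exact rational linear system $L_\alpha(\theta)=L_\alpha(\tilde\theta)$, where $L_\alpha(\theta)=\theta_{ik}-\theta_{jk}-\theta_{ik'}+\theta_{jk'}$ has integer coefficients and $L_\alpha(\tilde\theta)\in\mathbb{Q}$; (ii) a linear system with rational coefficients and rational right-hand sides that is solvable over $\mathbb{R}$ is solvable over $\mathbb{Q}$, giving $\theta^B\in\mathbb{Q}^{N^2}$ with $A=\tilde\theta-\theta^B$ in the common kernel; (iii) every phase matrix satisfying the system yields a complex Hadamard matrix, since within each cycle $C$ the numbers $e^{2\pi i(\theta_{ik}-\theta_{jk})}$, $k\in C$, form a common rotation of the full set of $n_C$-th roots of unity and hence sum to zero --- this applies to $B$ itself, which is therefore Butson, and to every point of the segment $\theta^B+tA$; (iv) since the whole line stays Hadamard, replacing $A$ by $rA$ and evaluating at $q_0=e^{2\pi i/r}$ close to $1$ turns it into an affine deformation of $B$ in the sense of Definition 1.3 passing through $H$, so the worry in your last sentence is indeed harmless. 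Two minor corrections: cycles are full sums of $n$-th roots of unity with $n$ arbitrary, not necessarily a prime power (immaterial here), and $\mathcal{X}$ is a finite union of cosets of the subtorus rather than literally having those cosets as connected components --- but only the affine leaf through $\tilde\theta$ is used.

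Your closing paragraph misidentifies the difficulty: no canonical choice of cycle decomposition is needed anywhere in your argument; any decomposition compatible with $H$ makes $\tilde\theta$ a solution of the resulting rational constraints, and that is all you use. The non-uniqueness the paper worries about is precisely what your existence argument sidesteps. Be aware, though, of how strong the claim is: the line you construct keeps the same cycle pattern, so if $H$ is in addition isolated the deformation $q\mapsto Hq^{A}$ must be trivial, forcing $A_{ij}=a_i+b_j$ and hence $H$ equivalent to a Butson matrix --- essentially Conjecture 2.6 as well. Since the paper treats both statements as open, you should write the argument out in full detail (in particular the exact meaning of ``$H$ is an affine deformation of a Butson matrix'', which your rescaling trick does satisfy under the paper's local Definition 1.3) before regarding the matter as settled; but as presented I see no error.
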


The problem here is on how to define a Butson matrix $H^1\in M_N(\mathbb C)$ which can produce $H$ itself via an affine deformation. The first thought would go to the idea ``dephase $H$, then look at the scalar products $<H_1,H_i>$, and set all the $\lambda$-parameters which multiply the various cycles to be 1''. But this is a quite complicated task, because the sum-of-cycles decomposition is not unique, and because the $\lambda$-parameters are not unique either.

In the rectangular case now, Conjecture 2.4 and Conjecture 2.5 both cannot hold. However, we have here the following related statement:

\begin{conjecture}[Weak isolation]
A dephased partial Hadamard matrix $H\in M_{M\times N}(\mathbb C)$ which is regular and isolated must be of Butson type.
\end{conjecture}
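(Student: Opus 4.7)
The plan is to argue by contrapositive: if a dephased regular $H\in C_{M,N}$ is not of Butson type, then one should be able to construct a non-trivial affine deformation of $H$, directly contradicting isolation. The natural route passes through Conjecture 2.5, and in the end the genuine content of Conjecture 2.6 should amount to proving, or at least invoking, a rectangular version of the Deformation Conjecture.

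First, grant (or establish) the rectangular analogue of Conjecture 2.5: a regular $H\in C_{M,N}$ can be written as $H_{ij}=H^0_{ij}q^{A_{ij}}$ with $H^0$ a Butson matrix, $A\in M_{M\times N}(\mathbb R)$, and $q\in\mathbb T_1$, so that $f(q):=(H^0_{ij}q^{A_{ij}})_{ij}$ is an affine deformation through $H$. If $H$ is moreover isolated, every deformation of $H$ must be trivial; in particular $f$ is trivial, forcing $A_{ij}=a_i+b_j$ for some $a\in\mathbb R^M$, $b\in\mathbb R^N$. Thus $H_{ij}=H^0_{ij}q^{a_i}q^{b_j}$ is nothing but a rephasing of the Butson matrix $H^0$.

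It remains to verify that a dephased rephasing of a Butson matrix is itself Butson, which is the easy step. From $H_{1j}=1$, after normalising so that $a_1=0$, we get $q^{b_j}=\overline{H^0_{1j}}$, which is a root of unity because $H^0$ is Butson. Then $H_{i1}=1$ together with $q^{b_1}$ a root of unity forces $q^{a_i}$ to be a root of unity as well. Hence each $H_{ij}=H^0_{ij}q^{a_i}q^{b_j}$ is a product of three roots of unity, and $H$ is of Butson type.

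The main obstacle is the first step: constructing the affine deformation from the cycle data. Concretely, one would write each scalar product $\langle H_i,H_j\rangle=0$ as a formal sum of cycles, identify the scalars $\mu_\alpha\in\mathbb T$ attached to the cycles as ``free'' parameters (since uniformly rescaling a cycle by $q^{A_{ij}}$-factors preserves its vanishing), and then produce a global exponent matrix $A$ compatible with all row-pair orthogonalities simultaneously. The delicate point is that the sum-of-cycles decomposition is not unique, distinct cycles across different row pairs share entries of $H$, and the non-obvious number-theoretic phenomena mentioned after Definition 2.1 (the $l=30$ type) may in principle obstruct a clean linearisation. Making this local-to-global passage rigorous is essentially of the same order of difficulty as Conjecture 2.5 itself, which is precisely why Conjecture 2.6 is advanced only as a conjecture here.
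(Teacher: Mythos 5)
This statement is Conjecture 2.6 of the paper: it is advanced as a conjecture, with no proof given, only partial evidence (Proposition 2.7 settles the case $M=2$ by a direct cycle argument, and Theorem 2.11 records the square cases $N\leq6$ via the classification results of Haagerup and of \cite{bbs}, plus $N=7$ modulo Conjecture 2.10). Your proposal does not prove it either, and you say so yourself: the entire weight rests on the first step, namely that every regular matrix is an affine deformation of a Butson matrix, which you ``grant'' rather than establish. What you have is therefore a conditional reduction, not a proof, and the unproved ingredient is acknowledged to be of the same difficulty as Conjecture 2.5.

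There is moreover a concrete obstruction to the route you chose, in exactly the setting where the conjecture lives. In the square case the implication you describe (Deformation Conjecture 2.5 implies Weak Isolation 2.6) is already observed in the paper (``in the square case, this statement is weaker than Conjecture 2.5''), so nothing new is obtained there; and the paper's actual evidence at $N\leq6$ comes from classification, not from 2.5, which is open. In the genuinely rectangular case $M<N$ the paper states explicitly that Conjectures 2.4 and 2.5 cannot hold; the phenomenon behind this is precisely the non-cycle vanishing sums such as $w^5+w^6+w^{12}+w^{18}+w^{24}+w^{25}=0$ with $w=e^{2\pi i/30}$, which for instance yields a $2\times6$ Butson matrix that is not regular. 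Conjecture 2.6 is formulated exactly because it is a weaker statement expected to survive where 2.5 fails, so founding the contrapositive on a ``rectangular analogue of Conjecture 2.5'' starts from a premise the authors reject. Your second and third steps (triviality of the deformation forcing $A_{ij}=a_i+b_j$, and a dephased rephasing of a Butson matrix being Butson) are fine, and your scheme does essentially recover the paper's $M=2$ argument: there the second row is $(q_1C_1,\ldots,q_sC_s)$ with $C_i$ full sums of roots of unity, rotating the individual $q_i$ gives deformations, isolation forces $s=1$, and dephasing forces $q_1=1$. But for $M\geq3$, and in particular in the purely rectangular regime, the question remains open, and your argument as written cannot close it.
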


Observe that in the square case, this statement is weaker than Conjecture 2.5.

Observe also that, since by Proposition 1.7 (2) minimal defect implies isolation, a slightly weaker conjecture would be that a regular Hadamard matrix $H\in M_{M\times N}(\mathbb C)$ having minimal defect, $d(H)=M+N-1$, must be of Butson type. However, in view of Conjecture 1.8, this weaker statement is probably equivalent to the original one.

As a first piece of evidence for this conjecture, we have:

\begin{proposition}
The weak isolation conjecture holds at $M=2$.
\end{proposition}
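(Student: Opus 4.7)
The plan is to exploit the rigidity of the $M=2$ dephased setting, where the rows-are-orthogonal condition reduces to a single equation. After dephasing, the matrix reads $H=\begin{pmatrix}1&1&\cdots&1\\ 1&h_2&\cdots&h_N\end{pmatrix}$, and the orthogonality collapses to $1+h_2+\cdots+h_N=0$. Regularity then means that this vanishing sum admits a decomposition into cycles, so the entries of the second row split into groups $C_1,\ldots,C_r$, each being a rotated full set of $p_j$-th roots of unity.

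The main step is to show that $r\geq 2$ is incompatible with the isolation hypothesis. Assuming $r\geq 2$, I pick a cycle $C_a$ not containing the entry $h_1=1$ and let $S\subset\{1,\ldots,N\}$ be the set of column indices with $h_k\in C_a$, so that $1\notin S$ and $S\neq\emptyset$. I then construct the deformation $f:\mathbb T_1\to (C_{2,N})_H$ whose first row is identically $1$ and whose second row is given by $f_{2k}(q)=q\cdot h_k$ for $k\in S$ and $f_{2k}(q)=h_k$ for $k\notin S$. Row orthogonality is preserved because the rescaled entries sum to $q\cdot 0=0$ while the remaining entries still sum to $0$; clearly $f(1)=H$ and $f_{21}(q)=1$, so $f$ is a genuine deformation.

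I then check that $f$ is not trivial. A trivial deformation has the form $f_{ij}(q)=H_{ij}q^{a_i+b_j}$, and $f_{1j}\equiv 1$ forces $b_j=-a_1$ constant in $j$; the second row must therefore deform uniformly as $f_{2j}(q)=h_jq^c$ with $c=a_2-a_1$. But my construction yields $f_{2j}(q)/h_j=q$ on the nonempty subset $S$ and $f_{2j}(q)/h_j=1$ off $S$, which cannot agree with any single power of $q$. This contradicts the hypothesis that $H$ is isolated, so $r=1$.

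It then suffices to note that a single cycle of length $N$ containing the entry $1$ has the form $\lambda\{1,\omega,\omega^2,\ldots,\omega^{N-1}\}$ with $\omega=e^{2\pi i/N}$ and $\lambda\in\mathbb T$, and the presence of $1$ in this set forces $\lambda$ itself to be an $N$-th root of unity. Hence the cycle equals $\{1,\omega,\omega^2,\ldots,\omega^{N-1}\}$, all entries of $H$ lie in the group of $N$-th roots of unity, and $H$ is of Butson type. The only non-routine ingredient is the cycle-rotation deformation, and what makes $M=2$ amenable to this argument is precisely that rotating a single cycle never conflicts with any further row-orthogonality relation; for $M\geq 3$ the same construction would typically violate orthogonality with the additional rows, which is presumably the obstacle to extending the conjecture.
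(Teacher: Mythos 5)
Your proof is correct and takes essentially the same route as the paper's: write the dephased second row as a union of rotated cycles, observe that with two or more cycles one can rotate a single cycle by $q$ to produce a non-trivial (affine) deformation contradicting isolation, and then use the dephased $1$ entry to see that the unique remaining cycle consists of $N$-th roots of unity, so $H$ is of Butson type. The paper's argument is just a terser version of this, with the cycle-rotation deformation left implicit.
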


\begin{proof}
Consider indeed a dephased regular matrix $H\in M_{2\times N}(\mathbb C)$. This matrix must look at follows, with $q_1,\ldots,q_s\in\mathbb T$, and with $C_1,\ldots,C_s$ being full sums of roots of unity:
$$H=\begin{pmatrix}
1&\ldots&1\\
q_1C_1&\ldots&q_sC_s
\end{pmatrix}$$

Now by assuming that our matrix is isolated, we can have only one cycle here, $s=1$. But since $H$ was assumed to be dephased, we must have $q_1=1$, and we are done.
\end{proof}

At $M=3$ or higher, providing evidence for the weak isolation conjecture, in the purely rectangular framework, is a quite non-trivial task. We will be back later on to this.

Let us discuss now the weak isolation conjecture in the square matrix case, with a complete discussion at $N\leq6$, and with a study, which is partly conjectural, at $N=7$.

Let us first recall the following key result, due to Haagerup \cite{ha1}:

\begin{proposition}
The only complex Hadamard matrices at $N=2,3,4,5$ are 
$$F_2,F_3,F_{2,2}^q,F_5$$
where $F_{2,2}^q$ is a certain one-parameter deformation of $F_{2,2}=F_2\otimes F_2$. 
\end{proposition}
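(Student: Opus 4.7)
The plan is to work case-by-case, reducing first to the dephased setting via Definition 1.2, and then exploiting the structure of short vanishing sums of unit complex numbers produced by orthogonality against the all-ones first row.

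For $N\in\{2,3,4\}$ the argument is essentially combinatorial. After dephasing, each non-initial row $H_i$ satisfies $\sum_kH_{ik}=0$, so its entries form a length-$N$ vanishing sum in $\mathbb T$ containing $1$. At $N=2$ this forces $H_{22}=-1$, giving $F_2$. At $N=3$, a direct unit-circle computation shows that any vanishing sum of three elements of $\mathbb T$ is a rotated $3$-cycle, so the row must consist of the three cube roots of unity, and column orthogonality then pins everything down to $F_3$. At $N=4$, the identity $e_3=\overline{e_1}\cdot e_4$ for the elementary symmetric functions of four points on $\mathbb T$ forces $\prod(X-z_i)$ to be even when $\sum z_i=0$, hence any length-$4$ vanishing sum decomposes as two antipodal pairs $\{x,-x\}\cup\{y,-y\}$. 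Each non-initial row thus has the shape $(1,-1,y,-y)$ up to column permutation, and pairwise row orthogonality between the three non-initial rows leaves exactly one continuous free parameter, recovering the deformation $F_{2,2}^q$ of $F_{2,2}=F_2\otimes F_2$.

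The case $N=5$ will be the main obstacle, being of a substantially different flavor. Even for five elements of $\mathbb T$ summing to zero there is a positive-dimensional family of configurations --- for instance the $5$-tuples $(1,-1,\mu,\mu\omega,\mu\omega^2)$ with $\omega=e^{2\pi i/3}$ and $\mu\in\mathbb T$ arbitrary --- whose entries are in general not fifth roots of unity. Hence the first-row orthogonality analysis alone cannot conclude, and one must bring into play all ten pairwise row orthogonalities simultaneously, together with their column analogues.

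The route I would follow, due to Haagerup \cite{ha1}, is to first normalize the first two rows as far as Definition 1.2 allows, then treat the remaining three rows as unknowns in $\mathbb T^5$ subject to the orthogonality equations; after expansion these become a polynomial system over $\mathbb Q$ that one shows has zero-dimensional solution set, with every solution equivalent to $F_5$. This is Haagerup's cyclic $5$-roots elimination, and it is the hardest step: the algebraic rigidity at $N=5$ does not follow from Proposition 2.2 alone, since that proposition applies only to roots-of-unity entries and does not pin down continuous parameters. I would therefore cite \cite{ha1} for the full computation and sketch only enough of the elimination to make the conclusion plausible.
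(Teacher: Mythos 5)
Your proposal is correct and follows essentially the same route as the paper: the cases $N=2,3,4$ are settled by elementary arguments (your antipodal-pair analysis of length-$4$ vanishing sums is just a more explicit way of arriving at the one-parameter family that the paper attributes to the Di\c t\u a deformation method), and the genuinely hard case $N=5$ is, exactly as in the paper, delegated to Haagerup's cyclic $5$-roots analysis in \cite{ha1}. Your remark that the first-row vanishing-sum argument cannot close the $N=5$ case, illustrated by the continuous family $(1,-1,\mu,\mu\omega,\mu\omega^2)$, correctly identifies why that case requires Haagerup's full elimination rather than the combinatorics used for $N\leq4$.
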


\begin{proof}
The results at $N=2,3,4$ are elementary, with $F_{2,2}^q$ appearing via the Di\c t\u a deformation method \cite{dit}. As for the $N=5$ result, this is something non-trivial. See \cite{ha1}.
\end{proof}

At $N=6$ now, the complex Hadamard matrices are not fully classified. However, in the regular case we have the following result, from \cite{bbs}:

\begin{proposition}
The only regular Hadamard matrices at $N=6$ are the deformations $F_{2,3}^{q,r},F_{3,2}^{q,r}$ of the Fourier matrix $F_6$, cf. \cite{dit}, and the matrices $H_6^q,T_6$ from \cite{ha1}, \cite{tao}.
\end{proposition}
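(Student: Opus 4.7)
The plan is to follow the strategy of \cite{bbs}: classify the row-pair scalar products by their cycle structure, translate each cycle type into rigid phase relations on the entries, and then carry out a case analysis to match the resulting global patterns with the four named families. For a regular matrix $H\in M_6(\mathbb C)$, each scalar product $\sum_kH_{ik}\bar H_{jk}$ is a vanishing sum of six elements of $\mathbb T$ that decomposes as a sum of cycles; since every cycle has length at least $2$, the only partitions of $6$ into cycle lengths are $6=6$, $4+2$, $3+3$, and $2+2+2$. Thus each of the $\binom{6}{2}=15$ row-pair scalar products falls into one of four types.

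Next, I would note that each cycle type imposes rigid constraints on the entries. A $2$-cycle on positions $k,l$ forces $H_{ik}\bar H_{jk}+H_{il}\bar H_{jl}=0$; a $3$-cycle on positions $k,l,m$ forces the three products $H_{ip}\bar H_{jp}$ for $p\in\{k,l,m\}$ to be a rotated copy of $\{1,\omega,\omega^2\}$ with $\omega=e^{2\pi i/3}$; and analogously for the $4$- and $6$-cycles. Imposing these relations simultaneously for all $15$ row pairs produces a highly constrained combinatorial system on the entries of $H$, whose solutions one must enumerate.

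I would then run through the admissible global distributions of cycle types. The $(3+3)$-dominated and $(2+2+2)$-dominated patterns should be compatible with the $\mathbb Z_2\times\mathbb Z_3$, respectively $\mathbb Z_3\times\mathbb Z_2$, tensor decomposition of $F_6$ and, via the Di\c t\u a construction \cite{dit}, should yield the Fourier deformations $F_{2,3}^{q,r}$ and $F_{3,2}^{q,r}$. The remaining consistent configurations, in which certain row pairs genuinely require longer cycles, should produce the Haagerup one-parameter family $H_6^q$ and the Tao matrix $T_6$ as the only further solutions, the latter being characterized by an especially rigid cycle pattern.

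The hard part will be the combinatorial case analysis itself: there are a priori many ways to assign the four cycle types to the $15$ row pairs, and one must systematically rule out every inconsistent global pattern while ensuring no consistent one has been overlooked. This bookkeeping is precisely what is carried out in \cite{bbs}, and the present proposition records the outcome.
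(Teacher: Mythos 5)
Your general strategy (classify the $15$ row-pair scalar products by cycle type, then do a case analysis) is indeed the strategy of \cite{bbs}, which both you and the paper ultimately defer to. But your intended case structure contains two genuine errors that would derail the analysis if you actually carried it out. First, you keep four cycle types $6$, $4+2$, $3+3$, $2+2+2$; this is redundant, because a full sum of $4$-th (resp.\ $6$-th) roots of unity splits into two $2$-cycles (resp.\ into two $3$-cycles or three $2$-cycles), so after refining to prime cycles the only possible structures of a regular vanishing sum of six unimodular numbers are $3+3$ and $2+2+2$. This reduction is the first step of the proof, and skipping it both inflates the bookkeeping and feeds your second error.

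Second, and more seriously, your attribution of the cases to the four families is backwards. The pure cases are the rigid/Butson-flavoured ones: if \emph{all} scalar products are of type $3+3$ one is led to the Tao matrix $T_6$ (whose entries are cube roots of unity, so every product is forced to be $3+3$), and if all are of type $2+2+2$ one is led to the Haagerup family $H_6^q$ (whose products split into antipodal pairs). The \emph{mixed} configurations are the ones giving the Di\c t\u a deformations $F_{2,3}^{q,r}$, $F_{3,2}^{q,r}$: for a deformation of $F_2\otimes F_3$, row pairs inside the same $F_3$-block have $3+3$ products while pairs across blocks have $2+2+2$ products. Your claim that $H_6^q,T_6$ arise from configurations ``which genuinely require longer cycles'' is also unfounded, since no $4$- or $6$-cycle is ever irreducible. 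Finally, you give no mechanism for taming the a priori huge number of type assignments in the mixed case; the key idea in \cite{bbs}, recalled in the paper, is to encode the assignment as a two-coloured ``cycle graph'' on the $6$ rows and show this graph can take only two values, which is what makes the enumeration finite and yields exactly the two Fourier deformations.
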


\begin{proof}
This result is from \cite{bbs}, the idea being as follows. First, the possible cycle lengths can be either $3+3$, or $2+2+2$. In the case where all scalar products are of type $3+3$, a direct computation leads to $T_6$. Also, in case where all scalar products are of type $2+2+2$, a direct computation leads to $H_6^q$. Finally, in the ``mixed'' case, the idea is that the ``cycle graph'' of our matrix, having as vertices the 6 rows, and with the edges colored black or white, according to the two possible cases  $3+3$ and $2+2+2$, can take only two values, and these two values lead to the matrices $F_{2,3}^{q,r},F_{3,2}^{q,r}$. See \cite{bbs}.
\end{proof}

In connection now with our questions, observe that the above results show that the weak isolation conjecture holds in the square matrix case, at any $N\leq6$. In order to discuss now the $N=7$ case, we must introduce the Petrescu matrix \cite{pet}.

This matrix, originally found in \cite{pet}, and which was later on put in a more conceptual framework by Sz\"oll\H{o}si in \cite{szo}, is as follows, with $w=e^{2\pi i/6}$, and with $q\in\mathbb T$: 
$$P_7^q
=\begin{pmatrix}
-q&q&w&1&w&1&w\\
q&-q&w&1&1&w&w\\
w&w&-w&1&w&w&1\\
1&1&1&-1&w&w&w\\
w&1&w&w&-\bar{q}w&\bar{q}w&1\\
1&w&w&w&\bar{q}w&-\bar{q}w&1\\
w&w&1&w&1&1&-1
\end{pmatrix}$$

Observe that the cycle structure of this matrix is $3+2+2$. The point now is that, according to the various computations that we have, this matrix seems to be the only one having cycle structure $3+2+2$. Moreover, together with some standard number theoretic work on the possible cycle structures of the regular $7\times7$ matrices, and with some results from \cite{hsc} as well, regarding the matrices $F_p$ with $p$ prime, we are led to:

\begin{conjecture}
$F_7,P_7^q$ are the only regular complex Hadamard matrices at $N=7$.
\end{conjecture}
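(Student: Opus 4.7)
The plan is to adapt the strategy of Proposition 2.9 to $N=7$. First, apply Proposition 2.2 (4): each scalar product between two rows of $H$ is a vanishing sum of length $7$, decomposing (with $\mathbb{Z}$-coefficients) into cycles of prime length $p_i$, subject to $7 \in \sum p_i\mathbb{N}$. Since the only ways to write $7$ as a sum of primes are $7$, $5+2$ and $3+2+2$, every row inner-product has one of exactly three cycle structures. Form the "cycle graph" on the $7$ vertices (rows), whose $\binom{7}{2}=21$ edges are colored by one of these three types. The goal is then to rule out all colorings except the two constant ones corresponding to $F_7$ and $P_7^q$.

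The next step is to handle the three monochromatic cases. For the uniform $7$-coloring, every pair of rows has inner product a single full cycle of $7$-th roots up to rotation, which is exactly the situation of \cite{hsc} for $F_p$ with $p$ prime; the rigidity results there should give $H \simeq F_7$. For the uniform $3+2+2$-coloring, I would run the direct computation analogous to the derivation of $H_6^q$ and $T_6$ in \cite{bbs}: dephase and normalize via row/column permutations, attach scalar parameters $q_{ij}, r_{ij}, s_{ij} \in \mathbb{T}$ to the three cycles in each scalar product, then translate the compatibility of the row-triples into a finite system that forces the Petrescu normal form. For the uniform $5+2$-coloring, I would aim for a contradiction: once a $5$-cycle appears in every row inner-product, each row must be supported (after rotation) on a fixed small set of $5$-th roots of unity combined with a $2$-cycle, and the interaction constraints across all $21$ pairs become over-determined, an argument of Lam--Leung style combined with a parity/counting obstruction.

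Finally, the mixed colorings must be excluded. Here I would proceed as in the "mixed case" step of Proposition 2.9: show that at each vertex of the cycle graph, the sequence of colors on the $6$ incident edges is highly constrained (for instance, the cycle type at a given row imposes rigid structure on all scalar products from that row), and then use a local-to-global combinatorial argument to reduce the admissible $3$-colorings of $K_7$ to the monochromatic ones. I expect the main obstacle to be twofold. First, the uniqueness of $P_7^q$ within the $3+2+2$ class (the second paragraph, Case 2) is genuinely computational and will likely require substantial symbolic manipulation, much as the derivation of $H_6^q$ required in \cite{bbs}. Second, the elimination of the $5+2$ type is the most delicate ingredient, since there is no known analogue of Haagerup's $F_5$-uniqueness to lean on; here a finer number-theoretic analysis of how $5$-th and $7$-th roots of unity can cohabit in a Hadamard matrix of order $7$ will be needed, and this is probably the step where the conjecture is presently unreachable without new ideas.
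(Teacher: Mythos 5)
The statement you are addressing is stated in the paper as a \emph{conjecture}: the authors offer no proof, only heuristic evidence (computer-aided computations suggesting that $P_7^q$ is the unique matrix with cycle structure $3+2+2$, some number-theoretic bookkeeping on the possible cycle structures of a length-$7$ vanishing sum, and an appeal to the results of \cite{hsc} on the matrices $F_p$). Your plan in fact reproduces exactly this line of reasoning -- the Lam--Leung reduction of the cycle structures to $7$, $5+2$, $3+2+2$, the colored ``cycle graph'' on the $7$ rows in the spirit of Proposition 2.9, and the three monochromatic plus mixed cases -- so as a \emph{strategy} it is aligned with what the paper envisages. But it is not a proof, and you say so yourself: the uniqueness of $P_7^q$ in the $3+2+2$ class is left to an unspecified ``finite system'' computation, the exclusion of the $5+2$ type is only a hope for an over-determination argument, and the elimination of mixed colorings is asserted via a ``local-to-global'' argument that is not given. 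These are precisely the open points that make the statement a conjecture rather than a theorem, so the proposal cannot be accepted as a proof.

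Two further concrete cautions. First, the appeal to \cite{hsc} in the monochromatic $7$-cycle case is weaker than you suggest: those results concern circulant Butson-type matrices, whereas a regular matrix need not have entries that are roots of unity (each cycle may be rotated by an arbitrary scalar in $\mathbb{T}$, cf.\ Definition 2.1), and nothing a priori makes the matrix circulant; deducing $H\simeq F_7$ from an all-$7$-cycle structure therefore needs an argument, not a citation. Second, the expectation that all mixed colorings can be excluded should be treated with suspicion rather than as a routine step: at $N=6$ the mixed colorings were \emph{not} empty -- they produced the deformed Fourier matrices $F_{2,3}^{q,r}$, $F_{3,2}^{q,r}$ -- and the analogous analysis at $N=7$, with three colors and $\binom{7}{2}=21$ edges, is exactly where the combinatorial explosion lies. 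Your honest assessment at the end (that the $5+2$ elimination, and more generally the whole classification, is presently out of reach without new ideas) is the correct conclusion: the proposal is a reasonable research program consistent with the paper's evidence, but it contains genuine gaps at every one of its decisive steps.
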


As a conclusion to all these considerations, let us formulate:

\begin{theorem}
The weak isolation conjecture holds for the square matrices:
\begin{enumerate}
\item At $N=2,3,4,5,6$.

\item At $N=7$ as well, modulo Conjecture 2.10.
\end{enumerate}
\end{theorem}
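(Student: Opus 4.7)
The plan is to combine the three classification results (Proposition 2.8, Proposition 2.9, and, conditionally, Conjecture 2.10) with a case-by-case verification. For each $N \in \{2,\ldots,7\}$ I enumerate the equivalence classes of regular dephased Hadamard matrices, identify which of them can possibly be isolated, and then observe that the remaining candidates have entries that are roots of unity. The statement then follows by inspection in each dimension.

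More concretely, at $N = 2, 3, 5$ Proposition 2.8 tells us that the only dephased complex Hadamard matrix up to equivalence is the Fourier matrix $F_N$ of a cyclic group of prime order, which is automatically of Butson type; here the implication holds trivially. At $N = 4$, every dephased matrix lies in the one-parameter Di\c t\u a family $F_{2,2}^q$, and I would show that the map $q \mapsto F_{2,2}^q$ produces a non-trivial smooth deformation of each of its members (i.e.\ a tangent vector not lying in $T_H^\times C_4$), so that no matrix at $N=4$ is isolated and the implication is vacuous. At $N = 6$, Proposition 2.9 reduces the regular case to the list $F_{2,3}^{q,r}, F_{3,2}^{q,r}, H_6^q, T_6$; the first three are nontrivial continuous families and so contain no isolated point, while $T_6$ has entries that are $6$th roots of unity and is therefore Butson. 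At $N = 7$, assuming Conjecture 2.10, the regular matrices are $F_7$ and $P_7^q$; the Fourier matrix $F_7$ is isolated by Proposition 1.12 and is Butson, whereas $P_7^q$ varies continuously in $q \in \mathbb T$ and so is never isolated.

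The only nontrivial technical point is checking that each continuous family actually yields a \emph{non-trivial} deformation at every point, i.e.\ one whose tangent vector does not lie in $T_H^\times C_N$. For $F_{2,2}^q, F_{2,3}^{q,r}, F_{3,2}^{q,r}$ this is immediate from the explicit Di\c t\u a construction, since the parameters are placed in positions which cannot be absorbed by simultaneous row and column rephasings of the dephased matrix. For $H_6^q$ and $P_7^q$ the same verification is direct from the explicit formulas, by locating a pair of entries whose ratio depends genuinely on $q$ modulo row/column phases.

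The main obstacle, conceptually, is none of the steps above but rather the reliance on the full $N=6$ classification of Proposition 2.9 (which itself rests on a delicate cycle-graph analysis from \cite{bbs}) and on the conjectural $N=7$ classification of Conjecture 2.10. In effect the proof of Theorem 2.11 does not introduce new ideas: it is a bookkeeping consequence of those classification statements, together with the observation that every non-Butson point in the known regular families sits in a continuous family of pairwise inequivalent dephased matrices, and is therefore not isolated.
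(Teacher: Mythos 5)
Your proposal is correct and follows essentially the same route as the paper: the paper's proof simply cites Propositions 2.8 and 2.9 for $N\leq 6$ and the considerations around the Petrescu matrix together with Conjecture 2.10 for $N=7$, exactly the bookkeeping you carry out. The only difference is that you make explicit the (implicit in the paper) verification that the continuous families $F_{2,2}^q$, $F_{2,3}^{q,r}$, $F_{3,2}^{q,r}$, $H_6^q$, $P_7^q$ give non-trivial deformations and hence contain no isolated points, while the remaining candidates $F_p$ and $T_6$ are of Butson type.
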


\begin{proof}
Here (1) follows from Proposition 2.8 and Proposition 2.9, and (2) follows from the above-mentioned considerations.
\end{proof}

\section{Special matrices}

In this section and in the next one we discuss some general constructions of isolated matrices, and a number of related structure and classification questions.

We begin our study with some generalities on the tensor products, and on the related notion of Di\c t\u a deformation \cite{dit}. We first have the following result, which is standard:

\begin{proposition}
Given two complex Hadamard matrices $K,L$, we have:
$$d(K\otimes L)\geq d(K)d(L)$$
In particular, assuming $K,L\neq(1)$, the matrix $K\otimes L$ cannot be isolated.
\end{proposition}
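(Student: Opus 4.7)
The plan is to use the tangent equation characterization of Proposition 1.5(1) and exhibit an explicit injection
$$\widetilde{T}_K C_{N_K} \otimes_{\mathbb{R}} \widetilde{T}_L C_{N_L} \hookrightarrow \widetilde{T}_{K \otimes L} C_{N_K N_L}$$
given by the entrywise Kronecker product of real matrices: $(A, B) \mapsto A \otimes B$ with entries $(A \otimes B)_{(i_1 i_2)(k_1 k_2)} = A_{i_1 k_1} B_{i_2 k_2}$. The bulk of the argument is showing that this Kronecker product always satisfies the defining equation for $K \otimes L$.

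To verify this, I would split the difference $A_{i_1 k_1} B_{i_2 k_2} - A_{j_1 k_1} B_{j_2 k_2}$ as $A_{i_1 k_1}(B_{i_2 k_2} - B_{j_2 k_2}) + (A_{i_1 k_1} - A_{j_1 k_1}) B_{j_2 k_2}$ and use the multiplicative factorization $(K \otimes L)_{(i_1 i_2)(k_1 k_2)} \overline{(K \otimes L)_{(j_1 j_2)(k_1 k_2)}} = K_{i_1 k_1} \bar K_{j_1 k_1} \cdot L_{i_2 k_2} \bar L_{j_2 k_2}$. The double sum then factors as a sum of two products of a $k_1$-sum in $K$ and a $k_2$-sum in $L$. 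A four-way case analysis on whether $i_1 = j_1$ and/or $i_2 = j_2$ kills each piece: cases in which one equality holds collapse the corresponding difference factor trivially to zero, while cases with strict inequalities are closed out by the tangent equations for $A$ with respect to $K$ and for $B$ with respect to $L$. Injectivity of the induced linear map on the tensor product is standard, since Kronecker products of bases are linearly independent in $M_{N_K N_L}(\mathbb{R})$, and this yields $d(K \otimes L) \geq d(K) d(L)$.

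For the second assertion, Proposition 1.7(1) gives $d(K), d(L) \geq 3$ as soon as $K, L$ have size at least two, and the elementary computation $(2 N_K - 1)(2 N_L - 1) - (2 N_K N_L - 1) = 2(N_K - 1)(N_L - 1) > 0$ shows that $d(K \otimes L)$ strictly exceeds its trivial lower bound. To convert this tangent-level excess into genuine non-isolation, without invoking Conjecture 1.8, I would exhibit actual non-trivial deformations via the Di\c t\u a construction: the family $(K \otimes_Q L)_{(i_1 i_2)(k_1 k_2)} = K_{i_1 k_1} Q_{i_1 k_2} L_{i_2 k_2}$, with $Q \in M_{N_K \times N_L}(\mathbb{T})$ ranging through a small neighborhood of the all-ones matrix, produces bona fide complex Hadamard matrices through $K \otimes L$ whose tangent directions generically escape the trivial cone.

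The main technical obstacle is the Kronecker product verification in the second paragraph, which is routine but requires careful bookkeeping across the four index cases. A more conceptual subtlety is the gap between ``defect above the trivial value'' and ``not isolated'': the first is immediate from the dimension count, but genuine non-isolation demands honest deformations, and this is where the Di\c t\u a construction enters as the decisive ingredient.
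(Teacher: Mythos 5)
Your argument is correct, and its first half coincides with the paper's: the paper disposes of the inequality in one line, by noting (with references) that the enveloping tangent space at $K\otimes L$ contains the tensor product of the enveloping tangent spaces at $K$ and $L$ --- which is exactly your Kronecker injection, and your factorization of $\sum_{k_1k_2}K_{i_1k_1}\bar K_{j_1k_1}L_{i_2k_2}\bar L_{j_2k_2}(A_{i_1k_1}B_{i_2k_2}-A_{j_1k_1}B_{j_2k_2})$ into two products, each killed by the tangent equations of $A$ or $B$, is the verification the paper leaves to the citations. One small simplification: the four-way case analysis is unnecessary, since the tangent equations of Proposition 1.5 (1) hold for all index pairs, including equal ones, so both factored terms vanish uniformly. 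Where you genuinely part ways with the paper is the second assertion. The paper simply says ``in particular'' and gives no argument; note that deducing non-isolation from $d(K\otimes L)>2N_KN_L-1$ alone would amount to invoking Conjecture 1.8, which is open, so your insistence on producing honest deformations is exactly the right instinct, and the Di\c t\u a construction is the standard way to do it. To make your ``generically escape the trivial cone'' precise, take for instance $Q_{ib}(q)=q^{\delta_{i1}\delta_{b1}}$: the resulting deformation $f_{(ia)(jb)}(q)=(K\otimes L)_{(ia)(jb)}q^{\delta_{i1}\delta_{b1}}$ is Hadamard for all $q$ by the Di\c t\u a property, and it is non-trivial because $\delta_{i1}\delta_{b1}$ cannot be written in the additive form $a_{ia}+b_{jb}$ when $N_K,N_L\geq2$ (compare the values at $(i,b)\in\{1,2\}^2$). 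So your proof is complete and in fact more careful than the paper's on the non-isolation claim, at the cost of the extra Di\c t\u a ingredient which the paper only introduces afterwards.
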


\begin{proof}
The inequality in the statement is well-known, and follows for instance from the fact that, with terminology from section 1, the enveloping tangent space at $K\otimes L$ contains the tensor product of the enveloping tangent spaces at $K,L$. See \cite{ba1}, \cite{tad}.
\end{proof}

In view of the above result, a first natural question is that of finding the pairs of matrices $(K,L)$ having the property that $d(K\otimes L)\geq d(K)d(L)$ is sharp. We have here:

\begin{proposition}
In the Fourier matrix case $K=F_G,L=F_H$ we have 
$$d(K\otimes L)=d(K)d(L)$$
precisely when the orders of the finite abelian groups $G,H$ satisfy $(|G|,|H|)=1$.
\end{proposition}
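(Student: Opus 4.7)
The plan is to reduce everything to a pointwise arithmetic inequality on the summands of the formula in Proposition 1.11(2). Since the Fourier matrix is multiplicative under tensor products, $F_G\otimes F_H=F_{G\times H}$, so Proposition 1.11(2) gives
\[
d(F_G\otimes F_H)=\sum_{(g,h)\in G\times H}\frac{|G|\,|H|}{\mathrm{ord}(g,h)},
\qquad
d(F_G)\,d(F_H)=\sum_{(g,h)\in G\times H}\frac{|G|\,|H|}{\mathrm{ord}(g)\,\mathrm{ord}(h)},
\]
using that $[K:\langle k\rangle]=|K|/\mathrm{ord}(k)$. Since the order of a pair in a direct product is the least common multiple, the question becomes whether $\mathrm{lcm}(\mathrm{ord}(g),\mathrm{ord}(h))=\mathrm{ord}(g)\,\mathrm{ord}(h)$ for every pair $(g,h)$, i.e.\ whether $\gcd(\mathrm{ord}(g),\mathrm{ord}(h))=1$ for all $(g,h)\in G\times H$.

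For the forward direction, if $\gcd(|G|,|H|)=1$ then Lagrange's theorem forces $\gcd(\mathrm{ord}(g),\mathrm{ord}(h))=1$ for every pair, so the two sums agree term by term. For the converse, assume some prime $p$ divides both $|G|$ and $|H|$. Then by Cauchy's theorem for finite abelian groups one can find $g\in G$ and $h\in H$ of order $p$; the corresponding term satisfies $\mathrm{lcm}(p,p)=p<p^2=\mathrm{ord}(g)\mathrm{ord}(h)$, so the summand for $(g,h)$ is strictly larger on the left. All other summands still satisfy $\mathrm{lcm}\le\mathrm{ord}(g)\mathrm{ord}(h)$, so the inequality $d(F_G\otimes F_H)\ge d(F_G)d(F_H)$ of Proposition 3.1 is strict.

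I do not expect any real obstacle: once Proposition 1.11 is used to pass from geometry to pure number theory on $G\times H$, the argument is an elementary application of $\mathrm{lcm}(a,b)\le ab$ with equality iff $\gcd(a,b)=1$. The only mild point is that the converse requires exhibiting simultaneous $p$-torsion in both $G$ and $H$, for which Cauchy's theorem (in its easy abelian form) suffices.
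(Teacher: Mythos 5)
Your argument is correct and complete. The only real difference from the paper is which part of Proposition 1.11 is used: the paper's one-line proof invokes the characterization (3) of the defect as the number of $1$ entries of $F_{G\times H}$, whereas you use the index-sum formula (2), $d(F_G)=\sum_{g\in G}[G:\langle g\rangle]$, together with $\mathrm{ord}(g,h)=\mathrm{lcm}(\mathrm{ord}(g),\mathrm{ord}(h))$ and the elementary fact that $\mathrm{lcm}(a,b)\leq ab$ with equality exactly when $(a,b)=1$. The two routes are essentially parallel pieces of arithmetic on $G\times H$: with the $1$-entries count one observes that a product of a $|G|$-th and an $|H|$-th root of unity can equal $1$ only when both do, provided $(|G|,|H|)=1$, while a common prime divisor produces extra $1$ entries; your version has the advantage of being fully self-contained where the paper appeals to a ``well-known'' fact, and your termwise comparison re-derives the inequality $d(F_G\otimes F_H)\geq d(F_G)d(F_H)$ in the Fourier case, so Proposition 3.1 is not even needed. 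Lagrange for the direct implication and Cauchy (easy in the abelian case) for the converse are exactly the right ingredients, and the strictness argument --- one strictly larger summand, all others at least as large --- is airtight.
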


\begin{proof}
Assuming $K=F_G,L=F_H$, we have $K\otimes L=F_{G\times H}$, and the result, which is well-known, follows by examining the number of 1 entries of the matrix $F_{G\times H}$.
\end{proof}

Yet another natural question emerging from Proposition 3.1 is that of finding the tensor products $K\otimes L$ which have minimal possible defect. We have here:

\begin{proposition}
The tensor products of Fourier matrices $F_G\otimes F_H$ having minimal defect are precisely those of the form $F_p\otimes F_q$, with $p\neq q$ both prime.
\end{proposition}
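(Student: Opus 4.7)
The plan is to combine Proposition 3.1 with the lower bound $d(H) \geq M + N - 1$ from Proposition 1.7(1), applied separately to each factor, to produce the chain
$$d(F_G \otimes F_H) \;\geq\; d(F_G)\, d(F_H) \;\geq\; (2|G| - 1)(2|H| - 1).$$
This pins down $(2|G| - 1)(2|H| - 1)$ as the smallest value the defect of a tensor product of two Fourier matrices can a priori take, so ``minimal defect'' is to be read as equality throughout this chain. The proof then reduces to identifying when all three inequalities are simultaneously saturated.

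First I would handle the outer inequality: by Proposition 3.2, the identity $d(F_G \otimes F_H) = d(F_G)\, d(F_H)$ is equivalent to $(|G|, |H|) = 1$. Next I would handle the two inner inequalities $d(F_G) \geq 2|G| - 1$ and $d(F_H) \geq 2|H| - 1$; each is saturated exactly when the corresponding Fourier matrix is isolated, which by Proposition 1.12 happens precisely when $G \simeq \mathbb{Z}_p$ and $H \simeq \mathbb{Z}_q$ with $p, q$ prime. Conjoining the three conditions produces $p \neq q$ prime, as required. Conversely, for any such pair $p \neq q$, the isolation of $F_p, F_q$ and their coprimality collapse the chain to equalities, and the matrix $F_p \otimes F_q$ realizes the minimal value $(2p-1)(2q-1)$.

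The proof is essentially bookkeeping of the preceding propositions, and I expect no serious obstacle. The one ingredient worth spelling out is the equivalence between $d(F_G) = 2|G| - 1$ and $G \simeq \mathbb{Z}_p$ prime; this is immediate from the closed formula $d(F_G) = \sum_{g \in G}[G : \langle g \rangle]$ of Proposition 1.11(2), which evaluates to $p + (p-1) = 2p-1$ when $G = \mathbb{Z}_p$, and strictly exceeds $2|G| - 1$ as soon as some nonzero $g \in G$ generates a proper subgroup, ruling out every other finite abelian group.
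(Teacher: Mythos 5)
Your proposal is correct and follows essentially the same route as the paper: the chain $d(F_G\otimes F_H)\geq d(F_G)d(F_H)\geq(2|G|-1)(2|H|-1)$ from Propositions 3.1 and 1.7, saturation of the first inequality via Proposition 3.2 (coprimality), and saturation of the second via Proposition 1.12, i.e.\ the defect formula of Proposition 1.11. Your explicit check that $\sum_{g\in G}[G:\langle g\rangle]=2|G|-1$ forces $G$ cyclic of prime order is exactly the content the paper delegates to Propositions 1.11--1.12, so there is nothing to add.
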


\begin{proof}
According to Proposition 3.1 above, we have the following inequality:
$$d(F_G\otimes F_H)\geq d(F_G)d(F_H)\geq(2|G|-1)(2|H|-1)$$

In order for the first inequality to hold, we know from Proposition 3.2 that we must have $(|G|,|H|)=1$. Also, in order for the second inequality to hold, we know from Proposition 1.12 that we must have $G=\mathbb Z_p$ and $H=\mathbb Z_q$, with both $p,q$ being prime. By combining these two requirements, we obtain the condition in the statement.
\end{proof}

Let us discuss now isolation questions for an interesting class of partial Hadamard matrices, namely the ``master'' ones, introduced in \cite{aff}, as follows:

\begin{definition}
A master Hadamard matrix is a partial Hadamard matrix of the form $H_{ij}=\lambda_i^{n_j}$, with $\lambda_i\in\mathbb T,n_j\in\mathbb R$. The associated ``master function'' is $f(z)=\sum_jz^{n_j}$.
\end{definition}

Observe that with $\lambda_i=e^{im_i}$ we have $H_{ij}=e^{im_in_j}$. In the square case, the basic example of such a matrix is the Fourier matrix $F_N$, having master function $f(z)=\frac{z^N-1}{z-1}$.

Observe that, in terms of $f$, the Hadamard condition on $H$ is simply:
$$f\left(\frac{\lambda_i}{\lambda_j}\right)=N\delta_{ij}$$

These matrices were introduced in \cite{aff}, the motivating remark there being the fact that $R=\sum_{ij}e_{ij}\otimes\Lambda^{n_i-n_j}$ defines a representation of the Temperley-Lieb algebra.

At the level of examples now, the first observation, from \cite{aff}, is that the standard $4\times 4$ complex Hadamard matrices are, with 2 exceptions, master Hadamard matrices:

\begin{proposition}
The following complex Hadamard matrix, with $|q|=1$,
$$F_{2,2}^q=\begin{pmatrix}
1&1&1&1\\
1&-1&1&-1\\
1&q&-1&-q\\
1&-q&-1&q
\end{pmatrix}$$
is a master Hadamard matrix, for any $q\neq\pm1$.
\end{proposition}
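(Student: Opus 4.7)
My plan is to exhibit an explicit master representation of $F_{2,2}^q$. Writing $q=e^{is}$ with $s\neq 0,\pi$, my candidate is
\[
\lambda=(1,-1,q,-q),\qquad n=\bigl(0,\,1,\,\pi/s,\,1+\pi/s\bigr),
\]
interpreted through the convention $\lambda_i^{n_j}=e^{im_in_j}$, where $\lambda_i=e^{im_i}$, as recorded in the discussion preceding Proposition~3.5.

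I would dispose of the easy parts first. Column~$1$ is automatic from $n_1=0$, and row~$1$ from $\lambda_1=1$. For row~$3$, the key identity
\[
q^{\pi/s}=e^{is\cdot\pi/s}=e^{i\pi}=-1
\]
gives $\lambda_3^{n_j}=q^{n_j}=(1,q,-1,-q)$, matching the third row of $F_{2,2}^q$ entry by entry. Row~$4$ then follows from the multiplicative relation $\lambda_4=\lambda_2\lambda_3=-q$, which yields $\lambda_4^{n_j}=\lambda_2^{n_j}\lambda_3^{n_j}$; once row~$2$ is known, this produces $(1,-q,-1,q)$ as required.

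The crux is therefore row~$2$: I need $\lambda_2^{n_j}=(1,-1,1,-1)$. The entries at $n_1=0$ and $n_2=1$ are immediate. For $n_3=\pi/s$ and $n_4=1+\pi/s$, the required identity reduces to $e^{im_2\pi/s}=1$, which forces a specific choice of the real lift $m_2\in\pi+2\pi\mathbb{Z}$ of $\arg(-1)$: one picks $m_2=(2k+1)\pi$ with $k=k(q)\in\mathbb{Z}$ chosen so that $(2k+1)\pi/s$ is an even integer, and then a parallel lift of $\arg(\lambda_4)$ to $m_4=s+(2k+1)\pi$ so that the factorization $\lambda_4^{n_j}=\lambda_2^{n_j}\lambda_3^{n_j}$ holds at the level of real arguments. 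This branch-selection step is where the assumption $q\neq\pm 1$ enters, since at $q=\pm 1$ the exponent $\pi/s$ is singular or degenerate.

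The main obstacle is precisely this branch matching for $\lambda_2$ (and consequently for $\lambda_4$): it concentrates all the number-theoretic subtleties of the congruences $m_in_j\equiv\arg(H_{ij})\pmod{2\pi}$ at a single entry. Once the lift $m_2$ is fixed, all four rows become a routine entry-by-entry verification, and the identity $H_{ij}=\lambda_i^{n_j}$ holds.
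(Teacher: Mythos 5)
Your construction follows essentially the same route as the paper's proof (row parameters $1,-1,q,-q$ and exponents $0,1,2k,2k+1$ with $q^{2k}=-1$, here $2k=\pi/s$), but the step you yourself single out as the crux does not go through. You need an integer $k$ with $(2k+1)\pi/s\in 2\mathbb Z$, i.e. $s=\frac{(2k+1)\pi}{2\ell}$ for suitable integers $k,\ell$; this forces $\arg q$ to be a rational multiple of $\pi$, and of a special parity type at that. For a generic $q\in\mathbb T$, say $q=e^{i}$ (so $s=1$), no such lift $m_2$ of $\arg(-1)$ exists, and even for some roots of unity, e.g. $q=e^{2\pi i/3}$, the parity constraint $3(2k+1)=4\ell$ has no solution. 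So the branch matching for row $2$ cannot be carried out, and your argument establishes the statement only for a restricted family of roots of unity $q$, not for all $q\neq\pm 1$.

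Moreover, the obstruction is not an artifact of your particular ansatz, so it cannot be repaired by a cleverer choice of lifts or exponents within the literal reading $\lambda_i^{n_j}=e^{im_in_j}$: if $H_{ij}=e^{im_in_j}$ with $m_i,n_j\in\mathbb R$, then the vanishing $2\times 2$ minor identity $(m_2n_2)(m_3n_3)=(m_2n_3)(m_3n_2)$, applied to rows $2,3$ and columns $2,3$ of $F_{2,2}^q$, reads $\pi^2(2a+1)(2b+1)=2\pi c\,(s+2\pi d)$ for some integers $a,b,c,d$, which again forces $s\in\pi\mathbb Q$. Thus rows $2$ and $3$ cannot be simultaneously realized in this form when $\arg q/\pi$ is irrational. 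This is exactly the delicate point that the paper's own argument passes over quickly (it picks $k$ with $q^{2k}=-1$ while implicitly also needing $(-1)^{2k}=1$ under its exponentiation convention), so a complete treatment has to confront it explicitly, either by restricting $q$ or by interpreting the exponentiation/equivalence more flexibly; your write-up instead asserts the required integer $k(q)$ exists, which is false in general.
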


\begin{proof}
We use the exponentiation convention $(e^{it})^r=e^{itr}$ for $t\in[0,2\pi)$ and $r\in\mathbb R$. Since $q^2\neq1$, we can find $k\in\mathbb R$ such that $q^{2k}=-1$, and so our matrix becomes:
$$F_{2,2}^q=\begin{pmatrix}
1^0&1^1&1^{2k}&1^{2k+1}\\
(-1)^0&(-1)^1&(-1)^{2k}&(-1)^{2k+1}\\
q^0&q^1&q^{2k}&q^{2k+1}\\
(-q)^0&(-q)^1&(-q)^{2k}&(-q)^{2k+1}\\
\end{pmatrix}$$

Now if we pick $\lambda\neq1$ and write $1=\lambda^x,-1=\lambda^y,q=\lambda^z,-q=\lambda^t$, we are done.
\end{proof}

We recall that, given two Hadamard matrices $H\in M_N(\mathbb C)$ and $K\in M_M(\mathbb C)$, the Di\c t\u a deformation of $H\otimes K$ with parameter matrix $Q\in M_{N\times M}(\mathbb T)$ is given by:
$$H\otimes_QK=(Q_{ib}H_{ij}K_{ab})_{ia,jb}$$

It is known, and easy to check, that this matrix is indeed complex Hadamard. As a basic example, we have $F_{2,2}^q=F_2\otimes_QF_2$, where $Q=(^1_1{\ }^1_q)$. See \cite{dit}. 

We have the following generalization of Proposition 3.5, once again from \cite{aff}:

\begin{proposition}
$F_N\otimes_QF_M$ is a master Hadamard matrix, whenever $Q\in M_{N\times M}(\mathbb T)$ is given by $Q_{ib}=q^{i(Mp_b+b)}$, where $q=e^{2\pi i/MNk}$ with $k\in\mathbb N$, and $p_0,\ldots,p_{M-1}\in\mathbb R$.
\end{proposition}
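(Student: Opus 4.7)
The plan is to verify the master Hadamard structure of $F_N\otimes_Q F_M$ directly from Definition 3.4, by packaging the entries as a single power of the root $q$ and then factorising the resulting exponent as a product of a row function by a column function.

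First I would compute $(F_N\otimes_Q F_M)_{ia,jb}=Q_{ib}(F_N)_{ij}(F_M)_{ab}$, using the relations $(F_N)_{ij}=q^{Mk\cdot ij}$ and $(F_M)_{ab}=q^{Nk\cdot ab}$ (which follow at once from $q=e^{2\pi i/MNk}$, so that $\omega=e^{2\pi i/N}=q^{Mk}$ and $\zeta=e^{2\pi i/M}=q^{Nk}$), together with the prescribed form $Q_{ib}=q^{i(Mp_b+b)}$. This packages the entry as $q^{E_{ia,jb}}$ with
$$E_{ia,jb}=iMp_b+ib+Mkij+Nkab.$$

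Next I would introduce the candidates
$$\mu_{ia}=i+Nka,\qquad n_{jb}=Mp_b+b+Mkj,$$
and a one-line distributive expansion of $\mu_{ia}n_{jb}$ produces $E_{ia,jb}+MNk(ap_b+kaj)$. Since $q$ has order $MNk$ and $kaj\in\mathbb{Z}$, this means $q^{\mu_{ia}n_{jb}}=q^{E_{ia,jb}}$ in the formal master-Hadamard framework. Setting $\lambda_{ia}:=q^{\mu_{ia}}\in\mathbb T$, we then obtain $(F_N\otimes_Q F_M)_{ia,jb}=\lambda_{ia}^{n_{jb}}$, which is exactly the form required by Definition 3.4.

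The algebraic content is modest: the factorisation $\mu_{ia},n_{jb}$ is essentially forced by the requirements that the dependence on $j$ be absorbed entirely into $n_{jb}$, that the dependence on $a$ be absorbed entirely into $\mu_{ia}$, and that $n_{jb}$ reduce to the exponent of the first block row when $(i,a)=(1,0)$. The real subtlety --- and the main point to get right --- is the exponentiation convention, which is already delicate in the proof of Proposition 3.5 (where $k\in\mathbb R$ is chosen so that $q^{2k}=-1$); here it has to be set up consistently so that the residual $q^{MNk(ap_b+kaj)}$ is identified with $1$, and so that $\lambda_{ia}^{n_{jb}}$ is read as $q^{\mu_{ia}n_{jb}}$ on the correct branch. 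Once this has been arranged, the verification reduces to the distributive expansion above.
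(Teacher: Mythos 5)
Your proof is correct and is essentially the paper's own argument run in reverse: your $\lambda_{ia}=q^{i+Nka}=q^iw^a$ and $n_{jb}=Mkj+Mp_b+b$ are exactly the eigenvalues and exponents (with the extra parameters $r_j$ of the paper set to $0$) that the paper extracts from the master-function factorization $f(z)=f_N(z^{Mk})f_M(z)$ of \cite{aff}, and your distributive expansion is the same computation $(q^iw^a)^{n_{jb}}=\nu^{ij}w^{ab}q^{i(Mp_b+b)}$ identifying the result with $F_N\otimes_QF_M$. The residual factor $q^{MNk(ap_b+kaj)}$ that you identify with $1$ corresponds precisely to the step $w^{a(Mp_b+b)}=w^{ab}$ in the paper's proof, so the exponentiation-convention subtlety you flag is present there too and is handled in exactly the same formal way.
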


\begin{proof}
The main construction in \cite{aff} is, in terms of master functions, as follows:
$$f(z)=f_N(z^{Mk})f_M(z)$$

Here $k\in\mathbb N$, and the functions on the right are by definition as follows:
$$f_N(z)=\sum_iz^{Nr_i+i}\quad\quad f_M(z)=\sum_az^{Mp_a+a}$$

The proof uses the eigenvalues $\lambda_{ia}=q^iw^a$, where $w=e^{2\pi i/M}$, and where $q^{Mk}=\nu$, where $\nu^N=1$. Observe now that, according to $f(z)=f_N(z^{Mk})f_M(z)$, the exponents are:
$$n_{jb}=Mk(Nr_j+j)+Mp_b+b$$

Thus the associated master Hadamard matrix is given by:
$$H_{ia,jb}=(q^iw^a)^{Mk(Nr_j+j)+Mp_b+b}=\nu^{ij}q^{i(Mp_b+b)}w^{a(Mp_b+b)}=\nu^{ij}w^{ab}q^{i(Mp_b+b)}$$

Now since $(F_N\otimes F_M)_{ia,jb}=\nu^{ij}w^{ab}$, we get $H=F_N\otimes_QF_M$ with $Q_{ib}=q^{i(Mp_b+b)}$, as claimed. Observe that $Q$ itself is a ``master matrix'', because the indices split.
\end{proof}

In view of the above examples, and of the lack of other known examples of master Hadamard matrices, in the square case, the following conjecture was made in \cite{aff}:

\begin{conjecture}[Master Hadamard conjecture]
In the square matrix case, the master Hadamard matrices appear as Di\c t\u a deformations of $F_N$.
\end{conjecture}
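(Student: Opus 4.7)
The plan is to leverage the orthogonality constraints in order to pin down the algebraic structure of both the base points $\lambda_i \in \mathbb{T}$ and the exponents $n_j \in \mathbb{R}$, and then to identify the resulting matrix with a Di\c t\u a deformation of a Fourier matrix. Setting $\mu_{ij} = \lambda_i \bar{\lambda}_j$ for $i \neq j$, the orthogonality of rows $i,j$ is equivalent to $f(\mu_{ij}) = 0$, where $f(z) = \sum_k z^{n_k}$ is the master function. After dephasing, I would assume $\lambda_1 = 1$ and $n_1 = 0$, so that the Hadamard condition becomes the statement that the generalized polynomial $f$, a sum of $N$ complex exponentials with real exponents, vanishes at the $N-1$ prescribed points $\lambda_2, \ldots, \lambda_N$ on the unit circle.

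The first, and most delicate, step is a commensurability reduction: showing that after a global rescaling $n_k \mapsto c \cdot n_k$ (which can be absorbed into the $\lambda_i$'s via $\lambda_i \mapsto \lambda_i^{1/c}$), all the $n_k$ may be assumed integers. I would attack this by exploiting that $H$ has full rank $N$: evaluating the various $N \times N$ minors yields nontrivial algebraic relations among the quantities $\lambda_i^{n_k}$ which, combined with $|\lambda_i| = 1$, should constrain the ratios $n_k/n_l$ to be rational. This is morally a finite Hadamard analogue of the six-exponentials theorem, and I expect it to be the major technical hurdle.

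Once commensurability is secured, $f$ becomes, up to a monomial factor, an honest Laurent polynomial whose zero set contains $\{\lambda_2, \ldots, \lambda_N\}$, and the $\lambda_i$'s may be taken as roots of unity. The analysis then reduces to a purely combinatorial problem about vanishing sums of roots of unity, accessible via the Lam-Leung framework recalled in Proposition 2.2. Concretely, I would aim to factor $f(z) = f_N(z^{Mk}) f_M(z)$ in the manner of the construction of Proposition 3.7, and to split the base set $\{\lambda_i\}$ as $\{q^i w^a\}$ with $w = e^{2\pi i/M}$, thereby reading off the parameters $N, M, k, p_b, r_i$ and exhibiting $H$ as $F_N \otimes_Q F_M$ with the prescribed $Q$.

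The main obstacle is plainly the commensurability step: a master Hadamard matrix whose exponents were $\mathbb{Q}$-linearly independent would escape any combinatorial description, and excluding this pathology requires genuine diophantine input. This is almost certainly where the square matrix hypothesis must intervene decisively, since only then do the $\binom{N}{2}$ orthogonality relations outnumber the free parameters among $\{\lambda_i\} \cup \{n_k\}$ by a margin sufficient to enforce arithmeticity; in the rectangular case, as the paper emphasizes, no such rigidity can be expected.
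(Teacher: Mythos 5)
The statement you are trying to prove is a \emph{conjecture} in the paper, attributed to \cite{aff}. The paper offers no proof of it, so there is nothing to compare your argument against; what can be assessed is whether your sketch actually closes the gap, and it does not.

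Your proposal has the shape of a reduction chain, but the load-bearing step --- the commensurability reduction, where one shows that the exponents $n_1,\ldots,n_N$ may be taken commensurable after a global rescaling --- is not established. You gesture at extracting rationality of the ratios $n_k/n_l$ from the nonvanishing of $N\times N$ minors, but having full rank imposes no arithmetic constraint on the exponents by itself: for generic $\mathbb{Q}$-linearly independent reals $n_k$ and generic $\lambda_i\in\mathbb{T}$ the matrix $(\lambda_i^{n_j})$ is invertible, so invertibility alone cannot force rationality. What would force it is the full system of orthogonality relations $f(\lambda_i/\lambda_j)=0$, but converting those into a statement that the $n_k$ are commensurable is, as you yourself note, a diophantine rigidity statement in the spirit of the six-exponentials theorem, and you give no mechanism for it. In effect this step is the content of the conjecture; invoking it as ``the major technical hurdle'' and moving on leaves the argument circular.

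The downstream steps are also incomplete. Even granting commensurability, so that $f$ becomes a Laurent polynomial and the $\lambda_i$ roots of unity, it does not follow that $f$ admits a factorization $f(z)=f_N(z^{Mk})f_M(z)$ of the type in Proposition 3.6. That factorization is a sufficient condition for producing a master Hadamard matrix which is a Di\c t\u a deformation; it is not obviously necessary, and the Lam--Leung machinery (Proposition 2.2) controls lengths and cycle decompositions of vanishing sums, not multiplicative factorizations of the associated generating functions. So the passage from ``$f$ is a Laurent polynomial vanishing at the prescribed roots of unity'' to ``$H=F_N\otimes_Q F_M$ for some admissible $Q$'' is a second, independent gap. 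You should treat this problem as open; a complete proof would need both a genuine transcendence/rigidity argument for the exponents and a structure theorem for the resulting Butson-type master matrices.
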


There is a relation here with the notions of defect and isolation, that we would like to discuss now. First, we have the following defect computation:

\begin{theorem}
The defect of a master Hadamard matrix is given by
$$d(H)=\dim_\mathbb R\left\{B\in M_N(\mathbb C)\Big|\bar{B}=\frac{1}{N}BL, (BR)_{i,ij}=(BR)_{j,ij}\ \forall i,j\right\}$$
where $L_{ij}=f(\frac{1}{\lambda_i\lambda_j})$ and $R_{i,jk}=f(\frac{\lambda_j}{\lambda_i\lambda_k})$, $f$ being the master function.
\end{theorem}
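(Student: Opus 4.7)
The starting point is Proposition 1.5(1), applied in the square case $M=N$: writing $H_{ij}=\lambda_i^{n_j}$ one has $H_{ik}\bar H_{jk}=(\lambda_i/\lambda_j)^{n_k}$, so $\widetilde T_HC_N$ is the real vector space of matrices $A\in M_N(\mathbb R)$ satisfying
$$\sum_k(\lambda_i/\lambda_j)^{n_k}(A_{ik}-A_{jk})=0\quad\text{for all }i,j.$$
The plan is to perform a $\mathbb C$-linear change of variable which absorbs the powers of $\lambda$ and turns both the reality of $A$ and the above system into the two conditions in the statement. The master Hadamard condition $f(\lambda_i/\lambda_j)=N\delta_{ij}$ is precisely $HH^*=NI$, so $H$ is invertible with $H^{-1}=H^*/N$, and I set $B=\tfrac{1}{N}AH^t$, with inverse $A=B\bar H$, i.e.\ $A_{ik}=\sum_a B_{ia}\bar\lambda_a^{n_k}$. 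This gives a bijection between $M_N(\mathbb C)$ and itself, and the task reduces to translating the two defining constraints on $A$ into constraints on $B$.

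For reality, $\bar A=A$ is equivalent to $\bar BH=B\bar H$, hence to $\bar B=\tfrac{1}{N}B(\bar HH^*)$. A direct computation gives
$$(\bar HH^*)_{ij}=\sum_k\bar\lambda_i^{n_k}\bar\lambda_j^{n_k}=f\bigl(1/(\lambda_i\lambda_j)\bigr)=L_{ij},$$
so the reality condition is exactly $\bar B=\tfrac{1}{N}BL$. For the tangent equations, substituting $A_{ik}=\sum_a B_{ia}\bar\lambda_a^{n_k}$ into $\sum_k(\lambda_i/\lambda_j)^{n_k}(A_{ik}-A_{jk})=0$ and exchanging the order of summation produces
$$\sum_a(B_{ia}-B_{ja})\sum_k\bigl(\lambda_i/(\lambda_j\lambda_a)\bigr)^{n_k}=\sum_a(B_{ia}-B_{ja})f\bigl(\lambda_i/(\lambda_j\lambda_a)\bigr)=0,$$
which, in view of $R_{a,jk}=f(\lambda_j/(\lambda_a\lambda_k))$, is precisely $(BR)_{i,ij}=(BR)_{j,ij}$.

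Combining the two identifications yields an $\mathbb R$-linear bijection between $\widetilde T_HC_N$ and the real vector space described in the statement, and since defect is by definition the real dimension of the former, the formula for $d(H)$ follows. The only real obstacle is bookkeeping: verifying the identity $\bar HH^*=L$ via the master Hadamard formalism, and carefully re-indexing the three-fold sum that produces $R$ so that it matches the shape of the second condition. No conceptual input beyond Proposition 1.5 and the invertibility $HH^*=NI$ is needed.
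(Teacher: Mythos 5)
Your proposal is correct and follows essentially the same route as the paper: apply Proposition 1.5 (1) with $H_{ik}\bar H_{jk}=(\lambda_i/\lambda_j)^{n_k}$, pass to $B=AH^t$ (your normalization $B=\frac{1}{N}AH^t$ differs only by a harmless scalar, since the target conditions are homogeneous), translate reality of $A$ into $\bar B=\frac{1}{N}BL$ via $\bar HH^*=L$, and translate the tangent equations into $(BR)_{i,ij}=(BR)_{j,ij}$. Nothing is missing.
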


\begin{proof}
The first order deformation equations are $\sum_kH_{ik}\bar{H}_{jk}(A_{ik}-A_{jk})=0$. With $H_{ij}=\lambda_i^{n_j}$ we have $H_{ij}\bar{H}_{jk}=(\lambda_i/\lambda_j)^{n_k}$, and so the defect is given by:
$$d(H)=\dim_\mathbb R\left\{A\in M_N(\mathbb R)\Big|\sum_kA_{ik}\left(\frac{\lambda_i}{\lambda_j}\right)^{n_k}=\sum_kA_{jk}\left(\frac{\lambda_i}{\lambda_j}\right)^{n_k}\ \forall i,j\right\}$$

Now, pick $A\in M_N(\mathbb C)$ and set $B=AH^t$, so that $A=\frac{1}{N}B\bar{H}$. First, we have:
$$A\in M_N(\mathbb R)\iff B\bar{H}=\bar{B}H\iff \bar{B}=\frac{1}{N}B\bar{H}H^*$$

On the other hand, the matrix on the right is given by:
$$(\bar{H}H^*)_{ij}=\sum_k\bar{H}_{ik}\bar{H}_{jk}=\sum_k(\lambda_i\lambda_j)^{-n_k}=L_{ij}$$

Thus $A\in M_N(\mathbb R)$ if and only the condition $\bar{B}=\frac{1}{N}BL$ in the statement is satisfied. Regarding now the second condition on $A$, observe that with $A=\frac{1}{N}B\bar{H}$ we have:
$$\sum_kA_{ik}\left(\frac{\lambda_i}{\lambda_j}\right)^{n_k}=\frac{1}{N}\sum_{ks}B_{is}\left(\frac{\lambda_i}{\lambda_j\lambda_s}\right)^{n_k}=\frac{1}{N}\sum_sB_{is}R_{s,ij}=\frac{1}{N}(BR)_{i,ij}$$

Thus the second condition on $A$ reads $(BR)_{i,ij}=(BR)_{j,ij}$, and we are done.
\end{proof}

We can now formulate our isolation conjecture, in the square matrix case:

\begin{conjecture}[Isolated master Hadamard conjecture]
The only isolated square master Hadamard matrices are the Fourier matrices $F_p$, with $p$ prime.
\end{conjecture}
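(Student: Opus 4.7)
The plan is to attack Conjecture 3.10 by reducing it to the master Hadamard conjecture (Conjecture 3.8), which asserts that any square master Hadamard matrix is equivalent to a Di\c t\u a deformation $F_a \otimes_Q F_b$ of a Fourier matrix, with $N = ab$. Granting this reduction, the argument splits into two cases according to the primality of $N$.

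First, suppose $N = p$ is prime. Then the only factorization $N = ab$ is the trivial $\{a,b\} = \{1,p\}$, for which $H = F_a \otimes_Q F_b$ reduces, up to Hadamard equivalence, to $F_p$ itself, since in this degenerate case the parameter $Q$ contributes only row or column phase multiplications. By Proposition 1.12, $F_p$ is isolated, which settles the ``easy'' direction of the conjecture.

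Second, suppose $N = ab$ with $a, b \geq 2$. The claim here is that no matrix of the form $H = F_a \otimes_Q F_b$ is isolated. When $Q$ is a constant matrix, $H$ is the plain tensor product $F_a \otimes F_b$, and Proposition 3.1 rules out isolation immediately. For general $Q$, the standard Di\c t\u a deformation argument provides, for each perturbation $A \in M_{a \times b}(\mathbb{R})$, a one-parameter family of complex Hadamard matrices $F_a \otimes_{Q(t)} F_b$ where $Q_{ib}(t) = Q_{ib} e^{it A_{ib}}$. The effective dimension of this family, after quotienting by the trivial row and column phase rescalings, is $(a-1)(b-1) \geq 1$, so at least one non-trivial deformation direction survives; the $4 \times 4$ example $F_{2,2}^q$ already illustrates the mechanism, with a full one-parameter family passing through every member.

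The main obstacle is of course the reliance on Conjecture 3.8, which is itself open. A direct attack avoiding this reduction would have to invoke Theorem 3.9, and show that for a master function $f$ and root data $(\lambda_i)$ not of the form produced by Proposition 3.6, the solution space $\{B \in M_N(\mathbb{C}) : \bar B = \tfrac{1}{N} BL,\ (BR)_{i,ij} = (BR)_{j,ij}\}$ strictly exceeds the $(2N-1)$-dimensional subspace corresponding to the trivial deformations. This looks genuinely difficult, because the matrices $L$ and $R$ depend intricately on the arithmetic of the exponents $n_j$, and the really hard case is precisely that of ``exotic'' master Hadamards not coming from any Fourier-tensor decomposition -- which is exactly the content of Conjecture 3.8.
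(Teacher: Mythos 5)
The statement in question is a conjecture in the paper --- the paper states it and offers no proof, so there is no argument of the authors' to compare yours against. What you have written is not a proof either, and you say so yourself: it is a conditional reduction, deriving Conjecture 3.9 from the master Hadamard conjecture, Conjecture 3.7, which is likewise open. As a reduction your argument is correct. Granting that any $n\times n$ master Hadamard matrix is, up to equivalence, some $F_a\otimes_Q F_b$ with $ab=n$, your case split covers everything (you leave implicit the composite case with $\{a,b\}=\{1,n\}$, but Proposition 1.12, which you already invoke, disposes of it at once), and for $a,b\geq2$ the Di\c t\u a family through $H$, of effective dimension $(a-1)(b-1)\geq 1$ after quotienting by trivial phase rescalings, is an honest non-trivial continuous deformation, so $H$ is not isolated.

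The gap, then, is exactly the one you name: everything hinges on Conjecture 3.7, and the transfer of difficulty is total, since the hard content of Conjecture 3.9 is precisely to exclude ``exotic'' isolated master Hadamard matrices lying outside the Di\c t\u a picture. Your fallback --- an unconditional attack via the defect formula of Theorem 3.8, showing that the solution space $\{B\in M_N(\mathbb C):\bar{B}=\frac{1}{N}BL,\ (BR)_{i,ij}=(BR)_{j,ij}\ \forall i,j\}$ strictly exceeds $2N-1$ real dimensions unless $H$ is equivalent to some $F_p$ --- is indeed the only concrete tool the paper provides, and you rightly identify the arithmetic dependence of $L,R$ on the exponents $n_j$ as the obstacle. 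As it stands the proposal is a clean and correct reformulation, not a proof, and the statement remains open.
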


In the rectangular matrix case the situation is more complicated, due to several ``exotic'' examples, and we were unable so far to formulate a conjecture here.

\section{McNulty-Weigert matrices}

We discuss in this section a key construction of isolated Hadamard matrices, due to McNulty and Weigert \cite{mwe}. The starting observation is as follows:

\begin{proposition}
Assuming that $K\in M_{M\times N}(\mathbb C)$ is Hadamard, so is the matrix
$$H_{ia,jb}=\frac{1}{\sqrt{Q}}K_{ij}(L_i^*R_j)_{ab}$$
provided that $\{L_1,\ldots,L_M\}\subset\sqrt{Q}U_Q$ and $\{R_1,\ldots,R_N\}\subset\sqrt{Q}U_Q$ are such that each of the matrices $\frac{1}{\sqrt{Q}}L_i^*R_j\in\sqrt{Q}U_Q$, with $i=1,\ldots,M$ and $j=1,\ldots,N$, is Hadamard.
\end{proposition}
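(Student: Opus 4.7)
The plan is to verify the two defining properties of a partial complex Hadamard matrix directly from the formula. There are two things to check: that every entry of $H$ has modulus $1$, and that the rows of $H$ are pairwise orthogonal. The first is essentially automatic from the hypothesis, and the second reduces, after one matrix manipulation, to the orthogonality of the rows of $K$.

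For the entrywise modulus, I would simply note that $|K_{ij}|=1$ because $K$ is partial Hadamard, while $|(L_i^*R_j)_{ab}|=\sqrt{Q}$ because $\frac{1}{\sqrt Q}L_i^*R_j$ is by assumption complex Hadamard. Multiplying by the $\frac{1}{\sqrt Q}$ prefactor gives $|H_{ia,jb}|=1$, so $H\in M_{MQ\times NQ}(\mathbb T)$.

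The main step is the row-orthogonality computation. I would fix two row indices $(i,a)$ and $(i',a')$ and compute
$$\sum_{j,b}H_{ia,jb}\overline{H_{i'a',jb}}=\frac{1}{Q}\sum_j K_{ij}\overline{K_{i'j}}\sum_b (L_i^*R_j)_{ab}\overline{(L_{i'}^*R_j)_{a'b}}.$$
The inner sum over $b$ is the $(a,a')$ entry of $L_i^*R_jR_j^*L_{i'}$. Since $R_j\in\sqrt{Q}\,U_Q$ we have $R_jR_j^*=QI$, so this inner sum collapses to $Q(L_i^*L_{i'})_{aa'}$, a quantity that does not depend on $j$. What remains is
$$(L_i^*L_{i'})_{aa'}\sum_j K_{ij}\overline{K_{i'j}}=N\,\delta_{ii'}\,(L_i^*L_{i'})_{aa'},$$
using the row orthogonality of $K$. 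For $i\neq i'$ this vanishes, and for $i=i'$ we invoke $L_i^*L_i=QI$ to obtain $NQ\,\delta_{aa'}$. This matches the required row-norm $NQ$ (the number of columns of $H$) and gives orthogonality otherwise, completing the verification.

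I do not foresee a genuine obstacle here: the whole argument is a one-line manipulation once one uses $R_jR_j^*=QI$ to decouple the $b$-sum from $j$. The only point to state clearly is why the hypothesis that each $\frac{1}{\sqrt Q}L_i^*R_j$ be Hadamard is needed \emph{only} for the modulus condition on the entries of $H$, while the orthogonality of rows uses just $L_i,R_j\in\sqrt{Q}\,U_Q$ together with the partial Hadamard property of $K$.
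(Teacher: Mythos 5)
Your computation is exactly the paper's: you collapse the inner $b$-sum via $R_jR_j^*=QI$ to get $Q(L_i^*L_{i'})_{aa'}$, then apply row orthogonality of $K$ and finally $L_i^*L_i=QI$, arriving at $NQ\,\delta_{ii'}\delta_{aa'}$, with the Hadamard hypothesis on $\frac{1}{\sqrt Q}L_i^*R_j$ used only to put the entries on the unit circle. Same approach, same steps; your remark separating which hypotheses feed the modulus check versus the orthogonality check is a nice clarification that the paper leaves implicit.
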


\begin{proof}
The check of the unitarity is done as in the square case \cite{mwe}, as follows:
\begin{eqnarray*}
<H_{ia},H_{kc}>
&=&\frac{1}{Q}\sum_{jb}K_{ij}(L_i^*R_j)_{ab}\bar{K}_{kj}\overline{(L_k^*R_j)}_{cb}\\
&=&\sum_jK_{ij}\bar{K}_{kj}(L_i^*L_k)_{ac}\\
&=&N\delta_{ik}(L_i^*L_k)_{ac}=NQ\delta_{ik}\delta_{ac}
\end{eqnarray*}

The entries being in addition on the unit circle, we are done. See \cite{mwe}.
\end{proof}

In practice, the examples of families $\{L_i\},\{R_j\}$ as above come from the mutually unbiased bases (MUB), via the general theory developed in \cite{deb}. We are particularly interested here in a well-known Fourier type construction, as follows:

\begin{proposition}
For $q\geq3$ prime, the matrices $\{F_q,DF_q,\ldots,D^{q-1}F_q\}$, where 
$$D=diag\left(1,1,w,w^3,w^6,w^{10},\ldots,w^{\frac{q^2-1}{8}},\ldots,w^{10},w^6,w^3,w\right)$$
with $w=e^{2\pi i/q}$, are such that $\frac{1}{\sqrt{q}}E_i^*E_j$ is complex Hadamard, for any $i\neq j$.
\end{proposition}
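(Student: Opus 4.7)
My plan is to reduce everything to the modulus of a classical quadratic Gauss sum. First I would unpack $D$: its diagonal entries amount to $D_{kk} = w^{k(k-1)/2}$ for $k = 0, 1, \ldots, q-1$, exponents read modulo $q$. The ``up-then-down'' pattern displayed in the statement is then just the congruence $\binom{q+1-k}{2} \equiv \binom{k}{2} \pmod{q}$, whose fixed point $k = (q+1)/2$ produces the peak value $\binom{(q+1)/2}{2} = (q^2-1)/8$. Because $D$ is a diagonal unitary, $(D^i F_q)^*(D^j F_q) = F_q^*\, D^{j-i}\, F_q$, so writing $a = j - i$ with $a \in \{1, \ldots, q-1\}$ the problem reduces to showing that
$$M^{(a)} := \tfrac{1}{\sqrt{q}}\, F_q^*\, D^a\, F_q$$
is complex Hadamard for every such $a$. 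The identity $F_q F_q^* = qI$ together with $(D^a)^* D^a = I$ immediately gives $M^{(a)}(M^{(a)})^* = qI$, so $M^{(a)} \in \sqrt{q}\,U_q$; only the entry moduli remain to be verified.

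Next I would compute the $(r,s)$-entry of $F_q^* D^a F_q$ as
$$\sum_{k=0}^{q-1} w^{a k(k-1)/2 + k(s-r)}.$$
Since $q$ is an odd prime, $2$ is a unit in $\mathbb{Z}_q$, so I can set $\alpha = a \cdot 2^{-1}$ and $\beta = s - r - \alpha$ in $\mathbb{Z}_q$, turning the exponent into the quadratic form $\alpha k^2 + \beta k$. Completing the square and then shifting $k$ by the constant $\beta(2\alpha)^{-1} \in \mathbb{Z}_q$ (a bijection of $\mathbb{Z}_q$ that leaves the sum invariant) yields
$$[F_q^* D^a F_q]_{rs} = w^{-\beta^2 (4\alpha)^{-1}} \sum_{k=0}^{q-1} w^{\alpha k^2}.$$

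The remaining sum is a pure quadratic Gauss sum at the exponent $\alpha \in \mathbb{Z}_q^*$, and the classical Gauss theorem gives $\bigl|\sum_{k=0}^{q-1} w^{\alpha k^2}\bigr| = \sqrt{q}$ for every $\alpha \not\equiv 0 \pmod{q}$. Hence each entry of $F_q^* D^a F_q$ has modulus $\sqrt{q}$, i.e., each entry of $M^{(a)}$ has modulus one, which finishes the proof. The only non-algebraic ingredient is the Gauss sum modulus estimate, which is standard and I would simply cite; the main care needed elsewhere is in justifying the modular inversions of $2$ and $\alpha$, for which the odd primality of $q$ is exactly what is required.
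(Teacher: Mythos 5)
Your argument is correct and follows essentially the same route as the paper: reduce to checking that the entries of $\tfrac{1}{\sqrt{q}}F_q^*D^aF_q$ (with $a=j-i\not\equiv 0$) are unimodular, then evaluate the resulting quadratic exponential sum $\sum_k w^{ak(k-1)/2+k(s-r)}$. The only cosmetic difference is that you complete the square and cite the classical modulus $\bigl|\sum_k w^{\alpha k^2}\bigr|=\sqrt{q}$, whereas the paper proves the same fact inline by expanding $|S|^2$ as a double sum and using orthogonality of characters, thereby avoiding the inversions of $2$ and $\alpha$ (which your use of $q$ odd prime does justify).
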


\begin{proof}
With $0,1,\ldots,q-1$ as indices, the formula of the above matrix $D$ is:
$$D_c=w^{0+1+\ldots+(c-1)}=w^{\frac{c(c-1)}{2}}$$

Since we have $\frac{1}{\sqrt{q}}E_i^*E_j\in\sqrt{q}U_q$, we just need to check that these matrices have entries belonging to $\mathbb T$, for any $i\neq j$. With $k=j-i$, these entries are given by:
$$\frac{1}{\sqrt{q}}(E_i^*E_j)_{ab}=\frac{1}{\sqrt{q}}(F_q^*D^kF_q)_{ab}=\frac{1}{\sqrt{q}}\sum_cw^{c(b-a)}D_c^k$$

Now observe that with $s=b-a$, we have the following formula:
\begin{eqnarray*}
\left|\sum_cw^{cs}D_c^k\right|^2
&=&\sum_{cd}w^{cs-ds}w^{\frac{c(c-1)}{2}\cdot k-\frac{d(d-1)}{2}\cdot k}
=\sum_{cd}w^{(c-d)\left(\frac{c+d-1}{2}\cdot k+s\right)}\\
&=&\sum_{de}w^{e\left(\frac{2d+e-1}{2}\cdot k+s\right)}
=\sum_e\left(w^{\frac{e(e-1)}{2}\cdot k+es}\sum_dw^{edk}\right)\\
&=&\sum_ew^{\frac{e(e-1)}{2}\cdot k+es}\cdot q\delta_{e0}=q
\end{eqnarray*}

Thus the entries are on the unit circle, and we are done.
\end{proof}

We recall that the Legendre symbol is defined as follows:
$$\left(\frac{s}{q}\right)=\begin{cases}
0&{\rm if}\ s=0\\
1&{\rm if}\ \exists\,\alpha,s=\alpha^2\\
-1&{\rm if}\not\exists\,\alpha,s=\alpha^2
\end{cases}$$

Here, and in what follows, all the numbers are taken modulo $q$. We have:

\begin{proposition}
The matrices $G_k=\frac{1}{\sqrt{q}}F_q^*D^kF_q$, with $D=diag(w^{\frac{c(c-1)}{2}})$ being as above, and with $k\neq0$ are circulant, their first row vectors $V^k$ being given by
$$V^k_i=\delta_q\left(\frac{k/2}{q}\right)w^{\frac{q^2-1}{8}\cdot k}\cdot w^{-\frac{\frac{i}{k}(\frac{i}{k}-1)}{2}}$$
where $\delta_q=1$ if $q=1(4)$ and $\delta_q=i$ if $q=3(4)$, and with all inverses being taken in $\mathbb Z_q$.
\end{proposition}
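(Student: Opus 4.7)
The plan is to expand $(G_k)_{ab}$ directly from its definition and to evaluate the resulting sum as a classical quadratic Gauss sum. Unfolding $G_k=\frac{1}{\sqrt q}F_q^*D^kF_q$ with $(F_q)_{ij}=w^{ij}$ and $D_c=w^{c(c-1)/2}$ gives
$$(G_k)_{ab}=\frac{1}{\sqrt q}\sum_{c\in\mathbb Z_q}w^{c(b-a)+kc(c-1)/2},$$
which depends on $a,b$ only through $s=b-a$; this already proves the circulant property, and identifies the first-row entries as $V^k_i=\frac{1}{\sqrt q}\sum_c w^{(k/2)c^2+(i-k/2)c}$.

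The main step will be to complete the square in $c$. Since $q$ is an odd prime and $k\not\equiv 0\pmod q$, both $2$ and $k$ are invertible in $\mathbb Z_q$, so I can rewrite the exponent as $(k/2)(c+\alpha)^2-(k/2)\alpha^2$ with $\alpha=i/k-1/2$ (all inverses in $\mathbb Z_q$). Translating $c\mapsto c-\alpha$ is a bijection of $\mathbb Z_q$, so the sum factors as a constant phase $w^{-(k/2)\alpha^2}$ times the pure quadratic Gauss sum $\sum_c w^{(k/2)c^2}$. I then apply the classical Gauss-sum formula $\sum_{c\in\mathbb Z_q}w^{ac^2}=\delta_q\left(\frac{a}{q}\right)\sqrt q$ for $a\in\mathbb Z_q^*$, with $\delta_q$ as in the statement; taken with $a=k/2$, it cancels the $1/\sqrt q$ prefactor and delivers $V^k_i=\delta_q\left(\frac{k/2}{q}\right)w^{-(k/2)\alpha^2}$.

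To match the announced form, I would expand $-(k/2)\alpha^2=-(k/2)(i/k-1/2)^2$ and simplify modulo $q$, isolating the $i$-dependent contribution $-\frac{(i/k)((i/k)-1)}{2}$ from an $i$-independent constant. That remaining constant works out to $-k/8$ in $\mathbb Z_q$, which I rewrite as $(q^2-1)k/8$ via the elementary congruence $(q^2-1)/8\equiv -1/8\pmod q$---valid because $q$ odd forces $(q^2-1)/8$ to be a bona fide integer, while $q^2\equiv 0\pmod q$ gives the stated reduction. The main obstacle will be the careful arithmetic in $\mathbb Z_q$: tracking the inverses $k^{-1}$ and $2^{-1}$ through the completion of squares, and invoking the correct value of $\delta_q$ in the quadratic Gauss sum evaluation. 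Nothing deeper should arise beyond these standard, if fiddly, manipulations.
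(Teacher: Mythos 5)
You take a genuinely different, and in fact more direct, route than the paper: the paper never evaluates the sum itself, but computes $(V^k_i)^2$ by a double-sum manipulation, finds the exponent through a chain of reductions (the case $k=1$, $i=0$, then translation in $i$, then the substitution $w\to w^k$), and only at the end fixes the residual sign from the square root by quoting Gauss's sign theorem, which is where the Legendre symbol enters. You instead complete the square and invoke the full classical evaluation $\sum_{c\in\mathbb Z_q}w^{ac^2}=\delta_q\left(\frac{a}{q}\right)\sqrt{q}$, which delivers modulus, sign and symbol in one stroke; the deep input is the same result of Gauss, but your organization avoids the sign-ambiguity step altogether. Everything through $V^k_i=\delta_q\left(\frac{k/2}{q}\right)w^{-\frac{k}{2}\alpha^2}$, with $\alpha=\frac{i}{k}-\frac{1}{2}$, is correct.

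The final ``matching'' step, however, asserts a false congruence. Expanding gives $-\frac{k}{2}\alpha^2=-\frac{i^2}{2k}+\frac{i}{2}-\frac{k}{8}$, so the $i$-dependent part is $-\frac{i}{2}\left(\frac{i}{k}-1\right)$, not $-\frac{1}{2}\cdot\frac{i}{k}\left(\frac{i}{k}-1\right)$; the two differ by a factor $k$ in the exponent and agree only when $k\equiv1$ (mod $q$). (Your constant term is fine: $-\frac{k}{8}\equiv\frac{q^2-1}{8}k$.) Carried out correctly, your argument proves $V^k_i=\delta_q\left(\frac{k/2}{q}\right)w^{\frac{q^2-1}{8}k}\,w^{-\frac{i}{2}\left(\frac{i}{k}-1\right)}$, i.e.\ the last factor of the displayed formula must be read as a power of $w^k$ rather than of $w$. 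This is not a defect of your method but a misprint in the target: the same factor-of-$k$ slip occurs in the paper's own proof, at the $w\to w^k$ substitution, where only the first term of the exponent gets multiplied by $k$. A quick check settles it: at $q=3$, $k=2$, $i=1$ the exponent in the sum is $c^2$, so $V^2_1=\frac{1}{\sqrt3}\sum_c w^{c^2}=\delta_3$, as the corrected formula predicts, while the printed formula gives $\delta_3\,w$. So you should redo the last expansion honestly and flag the discrepancy, rather than declare agreement with the announced form; as written, your isolation of the $i$-dependent contribution fails for every $k\neq1$.
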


\begin{proof}
This is a standard exercice on quadratic Gauss sums. First of all, the matrices $G_k$ in the statement are indeed circulant, their first vectors being given by:
$$V^k_i=\frac{1}{\sqrt{q}}\sum_cw^{\frac{c(c-1)}{2}\cdot k+ic}$$

Let us first compute the square of this quantity. We have:
$$(V_i^k)^2=\frac{1}{q}\sum_{cd}w^{\left[\frac{c(c-1)}{2}+\frac{d(d-1)}{2}\right]k+i(c+d)}$$

The point now is that the sum $S$ on the right, which has $q^2$ terms, decomposes as follows, where $x$ is a certain exponent, depending on $q,i,k$:
$$S=\begin{cases}
(q-1)(1+w+\ldots+w^{q-1})+qw^x&{\rm if}\ q=1(4)\\
(q+1)(1+w+\ldots+w^{q-1})-qw^x&{\rm if}\ q=3(4)
\end{cases}$$

We conclude that we have a formula as follows, where $\delta_q\in\{1,i\}$ is as in the statement, so that $\delta_q^2\in\{1,-1\}$ is given by $\delta_q^2=1$ if $q=1(4)$ and $\delta_q^2=-1$ if $q=3(4)$:
$$(V_i^k)^2=\delta_q^2\,w^x$$

In order to compute now the exponent $x$, we must go back to the above calculation of the sum $S$.  We succesively have:

-- First of all, at $k=1,i=0$ we have $x=\frac{q^2-1}{4}$.

-- By translation we obtain $x=\frac{q^2-1}{4}-i(i-1)$, at $k=1$ and any $i$.

-- By replacing $w\to w^k$ we obtain $x=\frac{q^2-1}{4}\cdot k-\frac{i}{k}(\frac{i}{k}-1)$, at any $k\neq0$ and any $i$. 

Summarizing, we have computed the square of the quantity that we are interested in, the formula being as follows, with $\delta_q$ being as in the statement:
$$(V^k_i)^2=\delta_q^2\cdot w^{\frac{q^2-1}{4}\cdot k}\cdot w^{-\frac{i}{k}(\frac{i}{k}-1)}$$

By extracting now the square root, we obtain a formula as follows:
$$V^k_i=\pm\delta_q\cdot w^{\frac{q^2-1}{8}\cdot k}\cdot w^{-\frac{\frac{i}{k}(\frac{i}{k}-1)}{2}}$$

The computation of the missing sign is non-trivial, but by using the theory of quadratic Gauss sums, and more specifically a result of Gauss, computing precisely this kind of sign, we conclude that we have indeed a Legendre symbol, $\pm=\left(\frac{k/2}{q}\right)$, as claimed. 
\end{proof}

Let us combine now the above results. We obtain the following statement:

\begin{theorem}
Let $q\geq3$ be prime, consider two subsets $S,T\subset\{0,1,\ldots,q-1\}$ satisfying $S\cap T=\emptyset$, and write $S=\{s_1,\ldots,s_M\}$ and $T=\{t_1,\ldots,t_N\}$. The matrix
$$H_{ia,jb}=K_{ij}V^{t_j-s_i}_{b-a}$$
where $V$ is as above, is then partial Hadamard, provided that $K\in M_{M\times N}(\mathbb C)$ is.
\end{theorem}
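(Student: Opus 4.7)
The plan is to apply Proposition 4.1 with $Q = q$, choosing the families $\{L_i\}$ and $\{R_j\}$ so that the matrix it produces coincides on the nose with the $H$ in the statement, and then repackaging the output using the circulant structure of the matrices $G_k = \frac{1}{\sqrt{q}} F_q^* D^k F_q$ extracted from Proposition 4.3.

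Concretely, I would set
\[
L_i = D^{s_i} F_q \quad (i = 1, \ldots, M), \qquad R_j = D^{t_j} F_q \quad (j = 1, \ldots, N).
\]
Since $D$ is a diagonal unitary and $F_q \in \sqrt{q}\, U_q$, both collections lie inside $\sqrt{q}\, U_q$. The products simplify as $L_i^* R_j = F_q^* D^{t_j - s_i} F_q$, and the hypothesis $S \cap T = \emptyset$ forces $t_j - s_i \not\equiv 0 \pmod{q}$ for every $(i,j)$. Proposition 4.2 therefore applies and tells us that each $\frac{1}{\sqrt{q}} L_i^* R_j = G_{t_j - s_i}$ is complex Hadamard, which is exactly the assumption required by Proposition 4.1.

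With these ingredients, Proposition 4.1 yields that $\tilde H_{ia,jb} = \frac{1}{\sqrt{q}} K_{ij} (L_i^* R_j)_{ab}$ is partial Hadamard. (The statement of Proposition 4.1 was phrased for square Hadamard $K$, but its proof uses only the row-orthogonality identity $\sum_j K_{ij} \bar K_{kj} = N\delta_{ik}$, so it extends verbatim to partial Hadamard rectangular $K$.) To identify $\tilde H$ with $H$, I would note that $G_k$ is circulant: a direct expansion gives
\[
(G_k)_{ab} = \frac{1}{\sqrt{q}} \sum_c w^{c(b-a) + \frac{c(c-1)}{2} k},
\]
which depends only on $b - a$. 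Its first row is precisely the vector $V^k$ analyzed in Proposition 4.3, so $(L_i^* R_j)_{ab} = \sqrt{q}\, V^{t_j - s_i}_{b-a}$; substituting yields $\tilde H_{ia,jb} = K_{ij}\, V^{t_j - s_i}_{b-a} = H_{ia,jb}$, completing the argument.

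The proof is largely a bookkeeping exercise, since Propositions 4.1--4.3 do the serious work; the only point that deserves a brief remark is the extension of Proposition 4.1 to partial $K$, which is immediate from its proof. Should one prefer to avoid that extension, it is equally easy to verify row-orthogonality of $H$ directly, by combining the row-orthogonality of $K$ with the fact that $\sum_b V^k_{b-a} \overline{V^k_{b-c}} = q \delta_{ac}$, which is just the Hadamard property of $G_k$ restated via its circulant structure.
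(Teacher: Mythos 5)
Your proposal is correct and follows the same route as the paper, which proves the theorem by exactly this combination of Proposition 4.1 (with the choices $L_i=D^{s_i}F_q$, $R_j=D^{t_j}F_q$), Proposition 4.2 (using $S\cap T=\emptyset$ to ensure $t_j-s_i\neq 0$), and the circulant identification from Proposition 4.3; your write-up simply makes the bookkeeping explicit. The only superfluous point is your caveat about extending Proposition 4.1 to rectangular $K$: as stated in the paper it already covers $K\in M_{M\times N}(\mathbb C)$, so no extension is needed.
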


\begin{proof}
This follows indeed by using the general construction in Proposition 4.1 above, with input coming from Proposition 4.2 and Proposition 4.3.
\end{proof}

In order to find now isolated matrices, the idea from \cite{mwe} is that of starting with an isolated matrix $K\in M_{M\times N}(\mathbb C)$, such as the Fourier one $F_p\in M_p(\mathbb C)$, or one of its truncations coming from Conjecture 1.14, and then use suitable sets $S,T$ as above.

The defect computations are, however, quite difficult. In the context of Proposition 4.3 above, the Gauss symbols $\delta_q$ will cancel, according to $\delta_q\bar{\delta}_q=1$, but we are left with a quite complicated computation, involving the Legendre symbols, and roots of unity. 

As a concrete statement, however, we have the following conjecture:

\begin{conjecture}[Arithmetic isolation]
The matrix constructed in Theorem 4.4 is isolated, provided that $K$ is an isolated truncation of $F_p$, with $p$ prime, and $S,T$ consist respectively of consecutive odd numbers, and consecutive even numbers.
\end{conjecture}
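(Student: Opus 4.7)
The plan is to apply Proposition 1.5(1) to the matrix $H$ of Theorem 4.4, exploit the bi-structured form $H_{ia,jb} = K_{ij}V^{t_j-s_i}_{b-a}$ via a discrete Fourier transform in the inner indices $a,b\in\mathbb Z_q$, and reduce the isolation question to a mode-by-mode analysis in which the isolation of $K$ handles the zero mode and the consecutive odd/even hypothesis on $S,T$ handles the rest. Writing $M' = Mq$, $N' = Nq$, by Proposition 1.7(2) it suffices to show $d(H) = M' + N' - 1$, i.e.\ that every $A \in \widetilde T_H C_{M',N'}$ has the trivial form $A_{ia,jb} = \sigma_{ia} + \tau_{jb}$. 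Substituting the structure of $H$ into Proposition 1.5(1) gives
$$\sum_{j,b} K_{ij}\bar K_{kj}\, V^{t_j-s_i}_{b-a}\,\overline{V^{t_j-s_k}_{b-c}}\,(A_{ia,jb} - A_{kc,jb}) = 0 \quad \forall\,(i,a),(k,c).$$

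The next step is to expand $A_{ia,jb} = \sum_{\alpha,\beta \in \mathbb Z_q} \widehat A_{i\alpha,j\beta}\, w^{-a\alpha + b\beta}$ with $w=e^{2\pi i/q}$, and substitute the explicit form of $V^k_\ell$ from Proposition 4.3. The prefactors $\delta_q$ cancel pairwise via $\delta_q\bar\delta_q = 1$, and each inner sum over $b$ collapses into a quadratic Gauss sum in $\beta$ twisted by a product of Legendre symbols $\left(\frac{(t_j-s_i)/2}{q}\right)\left(\frac{(t_j-s_k)/2}{q}\right)$. At the mode $\alpha = \beta = 0$ the Gauss sums degenerate cleanly and the surviving equations are precisely the first-order deformation equations for $K$ applied to the reduced matrix $B_{ij} := \widehat A_{i0,j0}$; since $K$ is by hypothesis an isolated truncation of $F_p$, this forces $B$ to be an additive split $\mu_i + \nu_j$, which is exactly the contribution from the trivial deformations of $H$ at the zero mode.

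The core of the argument is then to show that every non-zero mode $(\alpha,\beta)\neq(0,0)$ forces $\widehat A_{i\alpha,j\beta} = 0$, and this is where the consecutive odd/even hypothesis enters. Since $S$ is odd and $T$ is even, every difference $t_j - s_i$ is odd, so $(t_j-s_i)/2$ is well-defined in $\mathbb Z_q$ and traces out an arithmetic progression as $(i,j)$ varies over $S\times T$. I expect this progression structure to allow one to rewrite the non-zero-mode system as a finite-difference relation in $i$ and in $j$ whose symbol is a non-vanishing Gauss sum, forcing only the zero solution.

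The main obstacle will be this last invertibility claim: one needs to prove that the matrix with entries $\left(\frac{(t_j-s_i)/2}{q}\right) w^{\phi(i,j,\alpha,\beta)}$, where $\phi$ is the quadratic phase extracted from Proposition 4.3, has trivial kernel for every $(\alpha,\beta) \neq (0,0)$, under the consecutive-interval constraint on $S,T$. This will rely on Gauss's classical evaluation of quadratic Gauss sums together with a quasi-randomness estimate for the Legendre symbol on short arithmetic progressions; if the progressions are too short the naive bound could fail, so one may need to impose a mild lower bound on $|S|+|T|$ relative to $q$ as an extra technical hypothesis. A secondary bookkeeping issue is that the Fourier transform in $(a,b)$ does not immediately diagonalize everything, because the exponent $k = t_j - s_i$ in $V^k$ depends on the outer indices; one must therefore mix Fourier analysis in $(a,b)$ with direct matrix analysis in $(i,j)$, rather than invoke a pure tensor-product factorization.
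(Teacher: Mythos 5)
This statement is Conjecture 4.5, not a theorem: the paper offers no proof, only computer-aided evidence from \cite{mwe} in the square case plus some rectangular computations. There is therefore no paper argument to compare your proposal against, and the status of the statement is genuinely open.

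As for your plan, the overall strategy is reasonable as a research programme: Fourier transform in the inner $\mathbb Z_q$ indices, split into modes, handle the zero mode via isolation of $K$, and reduce the non-zero modes to a non-degeneracy statement about twisted Gauss sums. The right structural feature of the consecutive odd/even hypothesis is indeed that $t_j-s_i$ depends only on $j-i$, giving a Toeplitz pattern that you can hope to diagonalize. But the proposal has two genuine gaps, one of which you acknowledge. First, the crucial claim that the matrices with entries $\bigl(\frac{(t_j-s_i)/2}{q}\bigr)w^{\phi(i,j,\alpha,\beta)}$ have trivial kernel for every $(\alpha,\beta)\neq(0,0)$ is left unproved, and the tool you propose (quasi-randomness of the Legendre symbol on short progressions, P\'olya--Vinogradov style) will not close it in the regime the conjecture actually concerns, where $M,N$ are small compared to $q$; adding a lower bound on $|S|+|T|$ relative to $q$, as you suggest as a fallback, would change the conjecture rather than establish it. Second, your stated reason for the odd/even hypothesis is off: division by $2$ is always well-defined in $\mathbb Z_q$ for odd prime $q$, so $(t_j-s_i)/2$ makes sense regardless of parity; the Legendre symbol vanishes only when $t_j=s_i$, which is already excluded by $S\cap T=\emptyset$. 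The parity hypothesis must therefore be doing arithmetic work elsewhere (controlling quadratic residue patterns along the Toeplitz diagonals), and your account does not identify what that work is. In short, this is a plausible attack sketch on an open conjecture, not a proof.
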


This statement is supported by several computer-aided computations in \cite{mwe}, in the square matrix case, and by some specific rectangular matrix computations as well.

\section{Quantum permutations}

In the reminder of this paper we discuss some related quantum algebraic aspects. As explained in \cite{bsk}, the partial Hadamard matrices have something to do with the partial permutations. In order to explain this material, our starting point will be:

\begin{definition}
A partial permutation of $\{1\,\ldots,N\}$ is a bijection $\sigma:X\simeq Y$, with $X,Y\subset\{1,\ldots,N\}$. We denote by $\widetilde{S}_N$ the set formed by such partial permutations. 
\end{definition}

Observe that we have $S_N\subset\widetilde{S}_N$. The embedding $u:S_N\subset M_N(0,1)$ given by permutation matrices can be extended to an embedding $u:\widetilde{S}_N\subset M_N(0,1)$, as follows:
$$u_{ij}(\sigma)=
\begin{cases}
1&{\rm if}\ \sigma(j)=i\\
0&{\rm otherwise}
\end{cases}$$

By looking at the image of this embedding, we see that $\widetilde{S}_N$ is in bijection with the matrices $M\in M_N(0,1)$ having at most one 1 entry on each row and column.

Regarding the ``quantum version'' of the above notions, following the paper of Wang \cite{wa1}, where a free analogue of the permutation group $S_N$ was constructed, we have:

\begin{definition}
A submagic matrix is a matrix $u\in M_N(A)$ whose entries are projections ($p^2=p^*=p$), which are pairwise orthogonal on rows and columns. We let $C(\widetilde{S}_N^+)$ be the universal $C^*$-algebra generated by the entries of a $N\times N$ submagic matrix. 
\end{definition}

The algebra $C(\widetilde{S}_N^+)$ has a comultiplication given by $\Delta(u_{ij})=\sum_ku_{ik}\otimes u_{kj}$, and a counit given by $\varepsilon(u_{ij})=\delta_{ij}$. Thus $\widetilde{S}_N^+$ is a compact quantum semigroup. See \cite{bsk}.

The relation with the partial Hadamard matrices is immediate, coming from:

\begin{proposition}
If $H\in M_{M\times N}(\mathbb T)$ is partial Hadamard, with rows $H_1,\ldots,H_M\in\mathbb T^N$, then the matrix of rank one projections $P_{ij}=Proj(H_i/H_j)$ is submagic. Thus $H$ produces a $C^*$-algebra  representation $\pi_H:C(\widetilde{S}_M^+)\to M_N(\mathbb C)$, given by $u_{ij}\to P_{ij}$.
\end{proposition}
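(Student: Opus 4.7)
The plan is to describe each $P_{ij}$ explicitly as a rank-one projection, verify directly the three conditions defining a submagic matrix, and then invoke the universal property of $C(\widetilde{S}_M^+)$. The crucial first observation is that since $H$ has unimodular entries, the entrywise quotient $\xi_{ij} := H_i/H_j$ is again a vector in $\mathbb{T}^N$, so $\|\xi_{ij}\|^2 = N$, and the orthogonal projection onto the line it spans has the concrete form
\[ P_{ij} = \tfrac{1}{N}\,\xi_{ij}\xi_{ij}^* \in M_N(\mathbb{C}). \]
From this formula, $P_{ij}^2 = P_{ij}$ and $P_{ij}^* = P_{ij}$ are immediate.

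For the row-orthogonality condition, I would compute, for $j \neq k$,
\[ P_{ij}P_{ik} = \tfrac{1}{N^2}\,\xi_{ij}\bigl(\xi_{ij}^*\xi_{ik}\bigr)\xi_{ik}^*, \]
and reduce the vanishing to that of the scalar $\xi_{ij}^*\xi_{ik}$. Using $\overline{H_{ia}/H_{ja}} = H_{ja}/H_{ia}$ from $|H_{ia}|=1$, the shared index $i$ cancels entry by entry, giving
\[ \xi_{ij}^*\xi_{ik} \;=\; \sum_a \frac{H_{ja}}{H_{ia}}\cdot\frac{H_{ia}}{H_{ka}} \;=\; \sum_a H_{ja}\overline{H_{ka}} \;=\; \langle H_j, H_k\rangle, \]
which vanishes by the partial Hadamard hypothesis on the rows of $H$. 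The symmetric calculation for column orthogonality, $P_{ij}P_{kj} = 0$ for $i \neq k$, reduces in exactly the same way to $\langle H_k, H_i\rangle = 0$. Hence $(P_{ij})_{1\leq i,j\leq M}$ is genuinely a submagic matrix with entries in $M_N(\mathbb{C})$, and by the universal property in Definition 5.2 the assignment $u_{ij} \mapsto P_{ij}$ extends uniquely to a unital $*$-homomorphism $\pi_H:C(\widetilde{S}_M^+) \to M_N(\mathbb{C})$.

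There is no serious obstacle here — the entire content is the observation that the Hadamard quotient produces exactly the cancellation needed to convert $\xi_{ij}^*\xi_{ik}$ into a row-inner-product of $H$, which the Hadamard hypothesis annihilates off the diagonal. The only mild bookkeeping is to track which of the two indices ($i$ or $j$) is the shared one in each of the two cases, so as to confirm that one lands on a row-row inner product rather than a column-column one (the latter not being controlled by the partial Hadamard hypothesis).
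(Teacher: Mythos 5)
Your proof is correct and follows essentially the same route as the paper: both verify the submagic relations by reducing $\xi_{ij}^*\xi_{ik}$ (resp.\ $\xi_{ij}^*\xi_{kj}$) to a row inner product $\langle H_j,H_k\rangle$ (resp.\ $\langle H_k,H_i\rangle$) via cancellation of the shared index, then invoke universality of $C(\widetilde{S}_M^+)$. The only difference is that you spell out the rank-one projection formula $P_{ij}=\frac{1}{N}\xi_{ij}\xi_{ij}^*$ and the column case explicitly, whereas the paper records one computation and remarks that the other is similar.
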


\begin{proof}
We have indeed the following computation:
$$\Big\langle\frac{H_i}{H_j},\frac{H_i}{H_k}\Big\rangle=\sum_l\frac{H_{il}}{H_{jl}}\cdot\frac{H_{kl}}{H_{il}}=\sum_l\frac{H_{kl}}{H_{jl}}=<H_k,H_j>=\delta_{jk}$$

A similar computation works for the columns of $P$, and this gives the result.
\end{proof}

We can further build on this simple observation, in the following way:

\begin{definition}
The minimal semigroup $G\subset\widetilde{S}_M^+$ producing a factorization of type
$$\xymatrix{C(\widetilde{S}_{M}^+)\ar[rr]^{\pi_H}\ar[rd]&&M_N(\mathbb C)\\&C(G)\ar[ur]&}$$
with a bialgebra map at left, is called quantum semigroup associated to $H$.
\end{definition}

Here the fact that the bialgebra $C(G)$ as above exists, and is unique, follows by dividing the algebra $C(\widetilde{S}_{M}^+)$ by a suitable ideal. For full details, we refer once again to \cite{bsk}.

In order to discuss the ``classical'' case, we will need:

\begin{definition}
A pre-Latin square is a matrix $L\in M_M(1,\ldots,N)$ having the property that its entries are distinct, on each row and each column. 
\end{definition}

Given such a matrix $L$, to any $x\in\{1,\ldots,N\}$ we can associate the partial permutation $\sigma_x\in\widetilde{S}_M$ given by $\sigma_x(j)=i\iff L_{ij}=x$. We denote by $G\subset\widetilde{S}_M$ the semigroup generated by $\sigma_1,\ldots,\sigma_N$, and call it semigroup associated to $L$.

Also, given an orthogonal basis $\xi=(\xi_1,\ldots,\xi_N)$ of $\mathbb C^N$, we can construct a submagic matrix $P\in M_M(M_N(\mathbb C))$, according to the formula $P_{ij}=Proj(\xi_{L_{ij}})$.

With these notations, we have the following result, from \cite{bsk}:

\begin{proposition}
If $H\in M_{M\times N}(\mathbb C)$ is partial Hadamard, the following are equivalent:
\begin{enumerate}
\item The semigroup $G\subset\widetilde{S}_M^+$ is classical, i.e. $G\subset\widetilde{S}_M$.

\item The projections $P_{ij}=Proj(H_i/H_j)$ pairwise commute.

\item The vectors $H_i/H_j\in\mathbb T^N$ are pairwise proportional, or orthogonal. 

\item The submagic matrix $P=(P_{ij})$ comes for a pre-Latin square $L$.
\end{enumerate}
In addition, if so is the case, $G$ is the semigroup associated to $L$.
\end{proposition}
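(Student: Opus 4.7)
The plan is to establish the chain (1) $\Leftrightarrow$ (2) $\Leftrightarrow$ (3) $\Leftrightarrow$ (4), and then read off the identification of $G$ in the classical case.

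For (1) $\Leftrightarrow$ (2), recall from Definition 5.4 that $C(G)$ is the minimal bialgebra quotient of $C(\widetilde{S}_M^+)$ through which $\pi_H$ factors, with $u_{ij}$ mapping to $P_{ij}$. Classicality of $G$ means commutativity of $C(G)$, which forces commutativity of the images $P_{ij}$; conversely, if the $P_{ij}$ commute, then $\pi_H$ kills all commutators $[u_{ij},u_{kl}]$, hence factors through the abelianization $C(\widetilde{S}_M)$, and by the minimality in Definition 5.4 we obtain $G \subset \widetilde{S}_M$. The equivalence (2) $\Leftrightarrow$ (3) is the standard linear algebra fact that two rank-one orthogonal projections $Proj(v)$ and $Proj(w)$ commute iff $v$ and $w$ are proportional or orthogonal, applied directly to $P_{ij} = Proj(H_i/H_j)$.

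For (3) $\Rightarrow$ (4), the distinct lines among the $\mathbb C(H_i/H_j)$ are by assumption pairwise orthogonal, so one can enumerate them by unit vectors and extend to an orthonormal basis $\xi = (\xi_1,\ldots,\xi_N)$ of $\mathbb C^N$. Define $L_{ij}\in\{1,\ldots,N\}$ as the unique index with $H_i/H_j \in \mathbb C\xi_{L_{ij}}$, so that $P_{ij} = Proj(\xi_{L_{ij}})$. The submagic property of $P$ from Proposition 5.3 gives $P_{ij}P_{ij'}=0$ for $j\neq j'$, forcing $L_{ij}\neq L_{ij'}$, and similarly along columns; hence $L$ is a pre-Latin square. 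The converse (4) $\Rightarrow$ (2), hence (3), is immediate since projections onto distinct vectors of an orthonormal basis commute.

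For the final identification, in the classical case the image of $\pi_H$ is the commutative $C^*$-algebra generated by the minimal projections $Q_x = Proj(\xi_x)$ for $x$ in the range of $L$. Its characters are parameterized by these indices $x$; the character at $x$ evaluates $u_{ij} \mapsto P_{ij}=Q_{L_{ij}}$ as $\delta_{L_{ij}=x} = \delta_{\sigma_x(j)=i}$, matching the evaluation at $\sigma_x\in\widetilde{S}_M$, so the classical points of $G$ include exactly the $\sigma_x$; closure under the coproduct then yields the semigroup generated by $\sigma_1,\ldots,\sigma_N$. I expect the main technical obstacle to lie in the bialgebra step of (2) $\Rightarrow$ (1), namely carefully verifying that the minimal bialgebra quotient through which $\pi_H$ factors is exactly the commutative abelianization, and not some intermediate non-commutative quotient, so that minimality in Definition 5.4 genuinely produces a classical $G$.
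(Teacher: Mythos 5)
Your handling of the four equivalences is correct and is essentially the paper's own route, with the details written out: commutation of the $P_{ij}$ means $\pi_H$ kills commutators, hence factors through the abelianization $C(\widetilde{S}_M)$ (the universal commutative $C^*$-algebra generated by a submagic matrix is $C(\widetilde{S}_M)$ by Gelfand duality), and minimality in Definition 5.4 gives $G\subset\widetilde{S}_M$; the rank-one projection criterion gives (2)$\iff$(3); and the submagic relations force row/column distinctness of $L$. Your closing worry about (2)$\Rightarrow$(1) is misplaced: you do not need the minimal bialgebra quotient to \emph{equal} the abelianization, only that it is a further quotient of some commutative bialgebra quotient through which $\pi_H$ factors, and that is exactly what the minimality gives; commutativity of $C(G)$ follows, which is all that classicality of $G$ requires.

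The genuine gap is in the last assertion, $G=\langle\sigma_1,\ldots,\sigma_N\rangle$: your argument only proves one inclusion. The characters of the image algebra ${\rm Im}(\pi_H)={\rm span}\{1,Q_x\}$ pull back to the evaluations at the $\sigma_x$ (and, when $\sum_xQ_x\neq1$, at the null map $\emptyset$, which is anyway $\sigma_x$ for $x$ outside the range of $L$); composing with $C(G)\to M_N(\mathbb C)$ shows each $\sigma_x$ is a point of $G$, and closure of $G$ under composition gives $\langle\sigma_1,\ldots,\sigma_N\rangle\subset G$. But the phrase ``the classical points of $G$ include exactly the $\sigma_x$'' is not justified and cannot be: the map $C(G)\to M_N(\mathbb C)$ is in general far from injective (its image has dimension at most $N+1$, while $\dim C(G)=|G|$), so most points of $G$ are invisible as characters of the image algebra, and nothing in your argument bounds $G$ from above. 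What is missing is the minimality step: one must check that $\pi_H$ actually factors through $C(G')$ for $G'=\langle\sigma_1,\ldots,\sigma_N\rangle$, for instance via the unital $*$-homomorphism $\rho:C(G')\to M_N(\mathbb C)$, $\rho(f)=\sum_{x=1}^Nf(\sigma_x)Q_x$ with $Q_x=Proj(\xi_x)$, which is multiplicative since the $Q_x$ are pairwise orthogonal and sum to $1$, and satisfies $\rho(u_{ij}|_{G'})=Q_{L_{ij}}=P_{ij}$; one also uses that the diagonal of $L$ is constant (all $H_i/H_i$ equal $(1,\ldots,1)$), so $id=\sigma_{x_0}\in G'$ and the restriction map $C(\widetilde{S}_M^+)\to C(G')$ is a bialgebra map. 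Only then does minimality in Definition 5.4 give $G\subset G'$, hence equality; this factorization is the actual content of the paper's appeal to Gelfand duality and to \cite{bsk}.
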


\begin{proof}
Here $(1)\iff(2)$ is clear, $(2)\iff(3)$ comes from the fact that two rank 1 projections commute precisely when their images coincide, or are orthogonal, $(3)\iff(4)$ is clear again, and the last assertion comes from Gelfand duality. See \cite{bsk}. 
\end{proof}

We call ``classical'' the matrices in Proposition 5.6, that we will study now.

Let us begin with a study at $M=2$. With $\widetilde{S}_2=\{id,\tau,11,12,21,22,\emptyset\}$, where $\tau$ is the transposition, $ij$ is the partial permutation $i\to j$, and $\emptyset$ is the null map, we have:

\begin{proposition}
A partial Hadamard matrix $H\in M_{2\times N}(\mathbb T)$, in dephased form
$$H=\begin{pmatrix}1&\ldots&1\\ \lambda_1&\ldots&\lambda_N\end{pmatrix}$$
is of classical type when one of the following happens:
\begin{enumerate}
\item Either $\lambda_i=\pm w$, for some $w\in\mathbb T$, in which case $G=\{id,\tau\}$.

\item Or $\sum_i\lambda_i^2=0$, in which case $G=\{id,11,12,21,22,\emptyset\}$
\end{enumerate}
\end{proposition}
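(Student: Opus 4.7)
The plan is to apply Proposition 5.6, which reduces the problem to checking when the family of vectors $\{H_i/H_j\}_{i,j} \subset \mathbb{T}^N$ is pairwise proportional-or-orthogonal, and then translating the resulting pre-Latin square into partial permutations.

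For $M=2$ the relevant vectors are only four, namely $H_1/H_1=H_2/H_2=(1,\ldots,1)$ and $H_1/H_2=(\bar\lambda_1,\ldots,\bar\lambda_N)$, $H_2/H_1=(\lambda_1,\ldots,\lambda_N)$. First I would observe that the Hadamard condition $\sum_i\lambda_i=0$ immediately makes $(1,\ldots,1)$ orthogonal to both $(\lambda_i)$ and $(\bar\lambda_i)$, so all the ``mixed'' pairs are automatically fine. Thus the classical condition reduces to a single requirement: the vectors $(\lambda_i)$ and $(\bar\lambda_i)$ must be proportional or orthogonal.

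A short computation handles this dichotomy. Proportionality means $\bar\lambda_i = c\lambda_i$ for all $i$, i.e.\ $\lambda_i^2$ is independent of $i$, which is exactly the condition $\lambda_i=\pm w$ of case (1). Orthogonality means $\sum_i \bar\lambda_i\cdot\overline{\lambda_i} = \sum_i\bar\lambda_i^2 = 0$, which is case (2). These two possibilities are mutually exclusive (in case (1) the sum equals $Nw^2\neq 0$) and together exhaust the classical situations.

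Finally I would read off the semigroup $G$ from Proposition 5.6 via the associated pre-Latin square $L\in M_2(1,\ldots,N)$ recording which basis vector each $P_{ij}$ projects onto. In case (1) the projection $P_{12}$ coincides with $P_{21}$ (both are rank-one onto $(\epsilon_1,\ldots,\epsilon_N)$), so $L=\bigl(\begin{smallmatrix}1&2\\ 2&1\end{smallmatrix}\bigr)$, yielding the generators $\sigma_1=\mathrm{id}$, $\sigma_2=\tau$, hence $G=\{\mathrm{id},\tau\}$. In case (2) the three vectors $(1,\ldots,1)$, $(\lambda_i)$, $(\bar\lambda_i)$ are pairwise orthogonal, so $L=\bigl(\begin{smallmatrix}1&2\\ 3&1\end{smallmatrix}\bigr)$, yielding generators $\mathrm{id}$, $21$, $12$; composing these gives $21\cdot 12 = 11$ and $12\cdot 21 = 22$, while all other nontrivial compositions collapse to $\emptyset$, producing exactly $G=\{\mathrm{id},11,12,21,22,\emptyset\}$. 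The main (mild) obstacle is the bookkeeping in this last step: keeping the convention $\sigma_x(j)=i \Leftrightarrow L_{ij}=x$ consistent with the notation ``$ij = $ partial permutation $i\to j$'' when enumerating all compositions in the semigroup generated by $\{\mathrm{id},12,21\}$.
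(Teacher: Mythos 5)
Your proposal is correct and follows essentially the same route as the paper: the Hadamard condition makes the mixed pairs orthogonal, so everything reduces to whether $\lambda$ and $\bar\lambda$ are proportional (giving $\lambda_i=\pm w$ and $L=\bigl(\begin{smallmatrix}1&2\\2&1\end{smallmatrix}\bigr)$, hence $G=\{\mathrm{id},\tau\}$) or orthogonal (giving $\sum_i\lambda_i^2=0$ and $L=\bigl(\begin{smallmatrix}1&2\\3&1\end{smallmatrix}\bigr)$, hence $G=\langle \mathrm{id},21,12\rangle=\{\mathrm{id},11,12,21,22,\emptyset\}$). Your explicit enumeration of the compositions is just a slightly more detailed version of the paper's final step.
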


\begin{proof}
With $1=(1,\ldots,1)$ and $\lambda=(\lambda_1,\ldots,\lambda_N)$, the matrix formed by the vectors $H_i/H_j$ is $(^1_{\bar{\lambda}}{\ }^\lambda_1)$. Since $1\perp\lambda,\bar{\lambda}$ we just have to compare $\lambda,\bar{\lambda}$, and we have two cases:

(1) Case $\lambda\sim\bar{\lambda}$. This means $\lambda^2\sim1$, and so $\lambda_i=\pm w$, for some $w\in\mathbb T$. In this case the associated pre-Latin square is $L=(^1_2{\ }^2_1)$, the partial permutations $\sigma_x$ associated to $L$ are $\sigma_1=id$ and $\sigma_2=\tau$, and we have $G=<id,\tau>=\{id,\tau\}$, as claimed.

(2) Case $\lambda\perp\bar{\lambda}$. This means $\sum_i\lambda_i^2=0$. In this case the associated pre-Latin square is $L=(^1_3{\ }^2_1)$, the associated partial permutations $\sigma_x$ are given by $\sigma_1=id$, $\sigma_2=21$, $\sigma_3=12$, and so we obtain $G=<id,21,12>=\{id,11,12,21,22,\emptyset\}$, as claimed.
\end{proof}

The matrices in (1) are, modulo equivalence, those which are real. As for the matrices in (2), these are parametrized by the solutions $\lambda\in\mathbb T^N$ of the following equations:
$$\sum_i\lambda_i=\sum_i\lambda_i^2=0$$

These equations are obviously something quite complicated. So, as a conclusion, the world of classical Hadamard matrices is quite subtle, in the rectangular setting.

Let us discuss now the truncated Fourier matrix case. First, we have:

\begin{proposition}
The Fourier matrix, $F_N=(w^{ij})$ with $w=e^{2\pi i/N}$, is of classical type, and the associated group $G\subset S_N$ is the cyclic group $\mathbb Z_N$.
\end{proposition}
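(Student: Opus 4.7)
The plan is to verify condition (3) of Proposition 5.6 directly for $F_N$, then read off the associated pre-Latin square and identify the resulting subsemigroup of $\widetilde{S}_N$.

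First I would compute the ratio vectors. Indexing rows and columns by $\mathbb{Z}_N$, the $i$-th row of $F_N$ is $H_i = (w^{ik})_k$, so the entrywise ratio is
\[ (H_i/H_j)_k = w^{(i-j)k}. \]
Thus $H_i/H_j$ depends only on the residue $i-j \in \mathbb{Z}_N$, and in fact it is precisely the $(i-j)$-th row of $F_N$ (viewed as a vector in $\mathbb{T}^N$). Since distinct rows of the Fourier matrix are orthogonal, two ratio vectors $H_i/H_j$ and $H_{i'}/H_{j'}$ are equal when $i-j \equiv i'-j' \pmod N$ and orthogonal otherwise. This verifies condition (3), so by Proposition 5.6 the matrix $F_N$ is of classical type.

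Next I would make the associated pre-Latin square explicit. The orthogonal basis arising here is $\xi_x = H_x$ (the $x$-th row of $F_N$), and $P_{ij} = \mathrm{Proj}(H_i/H_j) = \mathrm{Proj}(\xi_{i-j})$. Consequently the pre-Latin square is
\[ L_{ij} = i - j \pmod N, \]
which indeed has distinct entries along each row and column. The associated partial permutations are then recovered from $\sigma_x(j) = i \iff L_{ij} = x$, i.e.\ $\sigma_x(j) = j + x$, so each $\sigma_x$ is the (total) permutation of $\{1,\dots,N\}$ given by translation by $x$ on $\mathbb{Z}_N$.

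Finally I would compute the generated semigroup. The set $\{\sigma_0, \sigma_1, \dots, \sigma_{N-1}\}$ already consists of all translations of $\mathbb{Z}_N$, and the composition $\sigma_x \sigma_y = \sigma_{x+y}$ is again a translation, so this set is closed under composition and equals the cyclic group $\mathbb{Z}_N \subset S_N \subset \widetilde{S}_N$. By the last assertion of Proposition 5.6, this is precisely the semigroup associated to $F_N$, completing the proof.

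No step is really an obstacle here; the only subtlety is keeping indexing conventions consistent (whether $L_{ij}$ records $i-j$ or $j-i$, and correspondingly whether $\sigma_x$ is translation by $+x$ or $-x$), but either convention yields the same generated object, namely $\mathbb{Z}_N$.
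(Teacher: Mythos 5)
Your proof is correct and follows essentially the same route as the paper: verify condition (3) of Proposition 5.6 via $H_i/H_j=\rho^{i-j}$, read off the circulant (pre-)Latin square, and observe that the $\sigma_x$ are exactly the translations of $\mathbb Z_N$, so the generated semigroup is the cyclic group. The only difference is the sign convention ($L_{ij}=i-j$ versus the paper's $L_{ij}=j-i$), which, as you note, is immaterial for the conclusion.
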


\begin{proof}
Since $H=F_N$ is a square matrix, the associated semigroup $G\subset\widetilde{S}_N^+$ must be a quantum group, $G\subset S_N^+$. We must prove that this quantum group is $G=\mathbb Z_N$.

With $\rho=(1,w,w^2,\ldots,w^{N-1})$ the rows of $H$ are given by $H_i=\rho^i$, and so we have $H_i/H_j=\rho^{i-j}$. We conclude that $H$ is indeed of classical type, coming from the Latin square $L_{ij}=j-i$ and from the orthogonal basis $\xi=(1,\rho^{-1},\rho^{-2},\ldots,\rho^{1-N})$.

We have $G=<\sigma_1,\ldots,\sigma_N>$, where $\sigma_x\in S_N$ is given by $\sigma_x(j)=i\iff L_{ij}=x$. From $L_{ij}=j-i$ we obtain $\sigma_x(j)=j-x$, and so $G=\{\sigma_1,\ldots,\sigma_N\}\simeq\mathbb Z_N$, as claimed.
\end{proof}

For a partial permutation $\sigma:I\simeq J$ with $|I|=|J|=k$, we set $\kappa(\sigma)=k$.  Also, we denote by $F_{M,N}$ be the upper $M\times N$ submatrix of the Fourier matrix $F_N$.

With these conventions, we have the following result:

\begin{theorem}
The semigroup $G_{M,N}\subset\widetilde{S}_M$ associated to $F_{M,N}$ is as follows:
\begin{enumerate}
\item In the $N>2M-2$ regime, $G_{M,N}\subset\widetilde{S}_M$ is formed by the maps $\sigma:I\simeq J$, $\sigma(j)=j-x$, with $I,J\subset\{1,\ldots,M\}$ being intervals.

\item The components $G_{M,N}^{(k)}=\{\sigma\in G_{M,N}|\kappa(\sigma)=k\}$ with $k>2M-N$ are, in the $M<N\leq2M-2$ regime, the same as those in the $N>2M-2$ regime.
\end{enumerate}
\end{theorem}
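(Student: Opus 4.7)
The plan is to invoke Proposition 5.6 to reduce $G_{M,N}$ to a concrete semigroup of partial permutations, and then to analyze the semigroup generated by an explicit family of shifts. With rows $H_i=(w^{ij})_j$ of $F_{M,N}$, the ratio $H_i/H_j=(w^{(i-j)k})_k$ depends only on $i-j\bmod N$, and any two such ratios are either equal or orthogonal by character orthogonality. Hence $F_{M,N}$ is of classical type, with associated pre-Latin square $L_{ij}=(j-i)\bmod N$, so that the generators from Proposition 5.6 are $\sigma_x(j)=i\iff i\equiv j-x\pmod N$, indexed by $x\in\{0,1,\ldots,N-1\}$. A direct calculation identifies the domain of each generator as a union of at most two intervals,
\[
\mathrm{dom}(\sigma_x)=[x,M-1]\cup[0,x+M-N-1],
\]
with the convention that $[a,b]$ is empty when $a>b$; on the first piece $\sigma_x$ is the shift $j\mapsto j-x$, and on the second piece the shift $j\mapsto j-x+N$. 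In the regime $N>2M-2$ the two pieces are never simultaneously non-empty, so each $\sigma_x$ is a single shift $T_s:j\mapsto j+s$ on its maximal interval of definition, with $s$ running over $\{-(M-1),\ldots,M-1\}$.

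For part (1), closure under composition is immediate: composing $T_s|_I$ with $T_t|_J$ gives $T_{s+t}$ restricted to the interval $J\cap(I-t)$, so every element of $G_{M,N}$ has the stated form. For the converse inclusion, the key gadget is $T_s\circ T_{-s}=\mathrm{id}$ on the interval $[\max(0,s),\min(M-1,M-1+s)]$, which for varying $s$ yields partial identities on arbitrary left- and right-aligned subintervals of $\{0,\ldots,M-1\}$; composing two such identities produces the identity on any subinterval $[a,b]$, and composing this in turn with a single shift generator $T_s$ realizes any desired $T_s|_{[a,b]}$.

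For part (2), the essential observation is that composition of partial permutations can only decrease rank, so every element of $G_{M,N}$ of rank $k>2M-N$ must be a product of generators of rank strictly greater than $2M-N$. In the tight regime $M<N\leq 2M-2$, the ``weird'' generators $\sigma_x$ for $x\in\{N-M+1,\ldots,M-1\}$, whose domain splits as a disjoint union of two intervals, all have rank exactly $2M-N$ and are hence excluded. The remaining generators of rank $>2M-N$ have single-interval domain and coincide, as partial permutations of $\{0,\ldots,M-1\}$, with the corresponding nice-regime generators $T_s$ for $|s|<N-M$. Applying the argument of part (1) to this common subfamily identifies the rank-$>2M-N$ strata in the two regimes.

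The main obstacle is the second half of part (1) --- realizing every shift on every subinterval as a product of the given generators, which is exactly what the ``identity-on-subinterval'' gadget is designed to handle. The remaining work is mostly bookkeeping: computing the domain formula for $\sigma_x$ correctly in both regimes, and tracking the rank through compositions.
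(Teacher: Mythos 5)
Your proposal is correct and follows essentially the same route as the paper: reduction via Proposition 5.6 to the circulant pre-Latin square $L_{ij}=j-i$, identification of the generators as truncated shifts, the partial-identity gadget (your $T_sT_{-s}$ is the paper's $\sigma_{N-k}\sigma_k$) to realize arbitrary interval shifts in part (1), and the rank-monotonicity $\kappa(\sigma\rho)\leq\min(\kappa(\sigma),\kappa(\rho))$ to discard the two-interval generators of rank $2M-N$ in part (2). The only differences are cosmetic bookkeeping (you build identities on arbitrary subintervals by intersecting left- and right-aligned ones, where the paper anchors its constructions at the index $M$), and your implicit check that the part (1) construction of a rank-$k$ element only uses shifts of size at most $M-k<N-M$ is exactly the one-line verification the paper also leaves tacit.
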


\begin{proof}
Both the assertions follow from an elementary study, as follows:

(1) Since for $\widetilde{H}=F_N$ the associated Latin square is circulant, $\widetilde{L}_{ij}=j-i$, the pre-Latin square that we are interested in is:
$$L=\begin{pmatrix}
0&1&2&\ldots&M-1\\
N-1&0&1&\ldots&M-2\\
N-2&N-1&0&\ldots&M-3\\
\ldots\\
N-M+1&N-M+2&N-M+3&\ldots&0
\end{pmatrix}$$

Observe that, due to our $N>2M-2$ assumption, we have $N-M+1>M-1$, and so the entries above the diagonal are distinct from those below the diagonal.

Let us compute now the partial permutations $\sigma_x\in\widetilde{S}_M$ given by $\sigma_x(j)=i\iff L_{ij}=x$. We have $\sigma_0=id$, and then $\sigma_2=\sigma_1^2,\sigma_3=\sigma_1^3,\ldots,\sigma_{M-1}=\sigma_1^{M-1}$. As for the remaining partial permutations, these are given by $\sigma_{N-1}=\sigma_1^{-1},\sigma_{N-2}=\sigma_2^{-1},\ldots,\sigma_{N-M+1}=\sigma_{M-1}^{-1}$. Thus $G_{M,N}=<\sigma_1>$. Now if we denote by $G_{M,N}'$ the semigroup in the statement, we have $\sigma_1\in G_{M,N}'$, so $G_{M,N}\subset G_{M,N}'$. The reverse inclusion can be proved as follows:

(a) Assume first that $\sigma\in G_{M,N}'$, $\sigma:I\simeq J$ has the property $M\in I,J$. Then we can write $\sigma=\sigma_{N-k}\sigma_k$, with $k=M-|I|$, so we have $\sigma\in G_{M,N}$.

(b) Assume now that $\sigma\in G_{M,N}'$, $\sigma:I\simeq J$ has just the property $M\in I$ or $M\in J$. In this case we have as well $\sigma\in G_{M,N}$, because $\sigma$ appears from one of the maps in (a) by adding a ``slope'', which can be obtained by composing with a suitable map $\sigma_k$.

(c) Assume now that $\sigma\in G_{M,N}'$, $\sigma:I\simeq J$ is arbitrary. Then we can write $\sigma=\sigma'\sigma''$ with $\sigma':L\simeq J$, $\sigma'':I\simeq L$, where $L$ is an interval satisfying $|L|=|I|=|J|$ and $M\in L$, and since $\sigma',\sigma''\in G_{M,N}$ by (b), we are done.

(2) At $M<N\leq2M-2$ the pre-Latin square has 0 on the diagonal, and then takes its entries from the following set, in a uniform way from each of the 3 components:
$$S=\{1,\ldots,N-M\}\cup\{N-M+1,\ldots,M-1\}\cup\{M,\ldots,N-1\}$$ 

The point now is that $\sigma_1,\ldots,\sigma_{N-M}$ are given by the same formulae as those in the proof of (1) above, then $\sigma_{N-M+1},\ldots,\sigma_{M-1}$ all satisfy $\kappa(\sigma)=2M-N$, and finally $\sigma_M,\ldots,\sigma_{N-1}$ are once again given by the formulae in the proof of (1) above.

Now since we have $\kappa(\sigma\rho)\leq\min(\kappa(\sigma),\kappa(\rho))$, adding the maps $\sigma_{N-M+1},\ldots,\sigma_{M-1}$ to the semigroup $G_{M,N}\subset\widetilde{S}_M$ computed in the proof of (1) above won't change the $G_{M,N}^{(k)}$ components of this semigroup at $k>2M-N$, and this gives the result.
\end{proof}

Further improving this result, as to have a full description of the semigroup $G_{M,N}\subset\widetilde{S}_M$ associated to $F_{M,N}$ is an interesting question, that we would like to raise here.

\section{Isolation questions}

In this section we briefly discuss isolation questions for the quantum algebraic objects constructed in section 5. In the square matrix case some probabilistic technology is available from \cite{wa2}, and for the Hadamard representations, the result is as follows:

\begin{theorem}
The $p$-th moment of spectral measure of the quantum group $G\subset S_N^+$ associated to an Hadamard matrix $H\in M_N(\mathbb C)$ is the dimension of the $1$-eigenspace of
$$(T_p)_{i_1\ldots i_p,j_1\ldots j_p}=tr(P_{i_1j_1}\ldots P_{i_pj_p})$$
where $P_{ij}=\frac{1}{N}(H_{ik}\bar{H}_{jk}H_{jl}\bar{H}_{il})_{kl}$ denotes as usual the projection on $H_i/H_j$.
\end{theorem}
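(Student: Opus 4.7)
The plan is to combine the Peter--Weyl formula for character moments with the Ces\`aro-type realization of the Haar integral on a Hopf image, in the spirit of Wang \cite{wa2}.

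First, I would recall that for any compact quantum group with fundamental corepresentation $u$, the $p$-th moment of the main character $\chi=\sum_iu_{ii}$ under the Haar state equals $\dim\mathrm{Fix}(u^{\otimes p})$. Unpacking via $\chi^p=\sum_Iu^{\otimes p}_{II}$ with multi-index $I=(i_1,\ldots,i_p)$, this moment is $\mathrm{Tr}(\Pi)$ where $\Pi=\int_Gu^{\otimes p}$ is the orthogonal projection of $(\mathbb C^N)^{\otimes p}$ onto $\mathrm{Fix}(u^{\otimes p})$. So it suffices to compute $\Pi$ through the matrix model $\pi_H:C(S_N^+)\to M_N(\mathbb C)$, $u_{ij}\mapsto P_{ij}$.

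Second, I would invoke the Ces\`aro/Hopf image machinery. By Definition 5.4 the quantum group $G\subset S_N^+$ is the Hopf image of $\pi_H$, so $\pi_H$ descends to an inner faithful representation of $C(G)$, and the induced state $\phi=\mathrm{tr}\circ\pi_H$ satisfies
$$\int_G=\lim_{n\to\infty}\frac{1}{n}\sum_{k=1}^n\phi^{*k}.$$
The comultiplication $\Delta(u_{ij})=\sum_lu_{il}\otimes u_{lj}$ makes state convolution coincide with ordinary matrix multiplication on the matrix $(\phi(u_{ij}))$; extending multiplicatively to $u^{\otimes p}$ one reads off
$$\phi(u^{\otimes p})_{IJ}=\phi(u_{i_1j_1}\cdots u_{i_pj_p})=\mathrm{tr}(P_{i_1j_1}\cdots P_{i_pj_p})=(T_p)_{IJ},$$
so that $\phi^{*k}$ applied to the entries of $u^{\otimes p}$ yields the matrix power $T_p^k$.

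Finally, since $u^{\otimes p}$ is a unitary corepresentation, the matrix $T_p=\phi(u^{\otimes p})$ is a contraction, and its Ces\`aro averages $\tfrac{1}{n}\sum_{k=1}^nT_p^k$ converge to the orthogonal projection $E$ onto $\ker(T_p-1)$. Combining everything gives $\Pi=E$ and hence
$$\int_G\chi^p=\mathrm{Tr}(\Pi)=\mathrm{Tr}(E)=\dim\ker(T_p-1),$$
which is the claim. The main obstacle is the middle step: rigorously justifying that the Ces\`aro limit computes the Haar integral on $C(G)$. This requires verifying that $\pi_H$ is inner faithful on $C(G)$ (built into Definition 5.4) and that $T_p$ is a contraction whose spectral values on the unit circle other than $1$ are killed by Ces\`aro averaging (automatic from the unitarity of $u^{\otimes p}$). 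Both points are standard but deserve explicit mention before the computation goes through.
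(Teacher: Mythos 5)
Your argument is correct and is essentially the same route the paper takes: the paper's proof merely cites the general moment formula of \cite{wa2} (with details in \cite{bbi}) together with the matrix model of Proposition 5.3, and your Peter--Weyl plus Ces\`aro/Hopf-image computation, identifying $\phi^{*k}(u_{i_1j_1}\cdots u_{i_pj_p})$ with $(T_p^k)_{IJ}$ and passing to the limit, is exactly the standard argument behind that citation. The analytic point you flag, namely that the Ces\`aro means of $\phi^{*k}$ compute the Haar state of the Hopf image, is precisely what the cited references provide, and your contraction/spectral argument for $T_p$ is sound (orthogonality of the limit idempotent is not even needed, since the trace of any idempotent equals its rank).
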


\begin{proof}
This follows indeed from the general moment formula established in  \cite{wa2}, with input coming from Proposition 5.3 above. For full details here, we refer to \cite{bbi}.
\end{proof}

The above result suggests the following definition:

\begin{definition}
A one-parameter family of compact quantum groups $G_q\subset S_N^+$ is called a ``deformation'' when the moment formula in Theorem 6.1
holds, for a certain family of matrices $\{T_p^q|p\in\mathbb N\}$, depending continuously on $q\in\mathbb T$. 
\end{definition}

Note that the representation theory of $G_q$ can vary wildly with $q\in\mathbb T$, as shown for instance in \cite{bbi}, \cite{bic}. We do not know whether the above notion of deformation admits or not a simpler axiomatization. This is a question that we would like to raise here.

Yet another series of interesting questions comes in relation with the almost Hadamard matrices. These matrices were introduced in \cite{bnz} in the real setting, the motivation being the fact that such matrices exist at any $N\in\mathbb N$, and can therefore replace the usual Hadamard matrices, when these are not available. A complex version of the theory was recently discussed in \cite{bn2}, with the following quite surprizing conclusion:

\begin{conjecture}[Almost Hadamard conjecture]
Any almost Hadamard matrix, taken in a complex sense, must be actually a complex Hadamard matrix.
\end{conjecture}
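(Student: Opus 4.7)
The plan is to adopt the variational characterization from \cite{bn2}: a complex almost Hadamard matrix is a local maximizer of the functional $F(U)=\sum_{ij}|U_{ij}|$ on the scaled unitary group $\sqrt{N}U_N$, with the complex Hadamard matrices $C_N$ being precisely the global maximizers (a trivial consequence of Cauchy--Schwarz applied row by row, with equality forcing $|U_{ij}|=1$). The goal is then to show that on $\sqrt{N}U_N$ this functional has no spurious local maxima: every critical point which is a local max already lies in $C_N$.

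First I would write down the first-order Lagrange conditions. Since $F$ is smooth on the open locus where all entries are nonzero, and $\sqrt{N}U_N$ is a smooth submanifold of $M_N(\mathbb C)$, the critical point equation takes the schematic form $\mathrm{Sign}(U)=U\Lambda$ for a Hermitian matrix $\Lambda$, where $\mathrm{Sign}(U)_{ij}=U_{ij}/|U_{ij}|$. Multiplying on the left by $U^*$ one obtains a matrix equation which, after comparing diagonal and off-diagonal parts, relates the moduli $|U_{ij}|$ directly to sums of phase products. Second, I would exploit the local maximum hypothesis: perturbing along $U(t)=U\exp(itA)$ with $A$ antihermitian and expanding $F(U(t))$ to second order produces a quadratic form in $A$ which must be non-positive for all admissible $A$. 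Combining the first-order identity with this negativity should force enough algebraic rigidity on the entrywise moduli to conclude $|U_{ij}|=\mathrm{const}=1$, after which the unitarity constraint gives $U\in C_N$. A separate argument, using one-sided directional derivatives at boundary points where some $U_{ij}=0$, handles non-smoothness of $F$: one constructs an explicit perturbation that activates a zero entry and strictly increases $F$, ruling out local maxima on this stratum.

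The hard part is unquestionably the second step. In the real setting the analogous functional genuinely does admit spurious local maxima (this is what makes real almost Hadamard matrices a nontrivial object in \cite{bnz}), so any proof in the complex case must crucially use the extra $U(1)$-phase freedom of the unitary group. Concretely, the obstacle is to show that at any non-Hadamard critical point one can always exhibit an antihermitian direction $A$ along which the Hessian of $F$ is strictly positive, and this seems to require delicate combinatorial control over the interplay between the phase matrix $\mathrm{Sign}(U)$ and the columns of $U$. A promising route is to interpret the Hessian computation through the defect-type formalism of Section 1, exploiting the fact that the enveloping tangent cone to $C_N$ at a Hadamard point is a real vector space while the critical-point set of $F$ on $\sqrt{N}U_N$ might be shown to have strictly larger local dimension at any non-Hadamard candidate; such a dimension mismatch, combined with the continuity of the spectral data of the Hessian, would give the contradiction. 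Absent such a structural input, one is reduced to case-by-case analysis, consistent with the fact that \cite{bn2} verifies the conjecture only in low dimensions.
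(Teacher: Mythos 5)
This statement is labeled a conjecture in the paper, and the paper offers no proof of it: it merely reformulates it (``any local maximizer of the 1-norm on the unitary group $U_N$ is actually a global maximizer'') and points to \cite{bn2} for partial evidence. Your proposal reproduces exactly this reformulation -- the Cauchy--Schwarz identification of the global maximizers of $\sum_{ij}|U_{ij}|$ with the rescaled complex Hadamard matrices, the first-order condition $\mathrm{Sign}(U)=U\Lambda$, the second-order expansion along $U e^{itA}$ -- but the entire mathematical content of the conjecture lies in the step you yourself flag as unresolved, namely excluding non-Hadamard local maximizers via the Hessian. Setting up the Lagrange conditions and saying that the second-order form ``should force enough algebraic rigidity'' is not an argument; it is a restatement of what must be proved. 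In particular, the suggestion of a ``defect-type dimension mismatch'' between the critical set of the 1-norm and the enveloping tangent space of $C_N$ is only a heuristic: nothing in Section 1 of the paper, nor in your sketch, produces an antihermitian direction $A$ with strictly positive second variation at an arbitrary non-Hadamard critical point, and the real case (where genuine non-Hadamard local maximizers exist, as in \cite{bnz}) shows that no argument at the level of generality you describe can work without isolating precisely how the complex phase freedom enters -- which is the open problem.

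A secondary gap: your treatment of the non-smooth stratum (entries $U_{ij}=0$) is asserted in one sentence (``one constructs an explicit perturbation that activates a zero entry and strictly increases $F$''), but no such perturbation is constructed, and the one-sided derivative of $|U_{ij}(t)|$ at a zero entry is $|A|$-dependent in a way that does not obviously dominate the losses from the nonzero entries; this needs a real estimate, not an assertion. So the proposal is a reasonable research plan consistent with \cite{bn2}, but it does not prove the statement, and indeed no proof exists in the paper -- the statement remains a conjecture.
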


In more concrete terms, the problem is that of proving that any local maximizer of the 1-norm on the unitary group $U_N$ is actually a global maximizer. See \cite{bn2}.

The above conjecture is something definitely geometric, and among the pieces of evidence for it are some defect computations for the almost Hadamard matrices, leading to the same equations as those found by Tadej and \.Zyczkowski in \cite{tz2}. These computations suggest the following conjecture, which was made as well in \cite{bn2}:

\begin{conjecture}[AHM isolation]
For a rescaled unitary matrix $U\in\sqrt{N}U_N$, the following conditions are equivalent:
\begin{enumerate}
\item $U$ is a strict almost Hadamard matrix.

\item $U$ is an isolated complex Hadamard matrix.

\item $U$ is a complex Hadamard matrix with minimal AHM/CHM defect.
\end{enumerate}
\end{conjecture}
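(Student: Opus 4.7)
The plan is to establish the three-way equivalence by reducing everything to the Tadej--\.Zyczkowski first-order deformation equations of Proposition 1.5, exploiting the fact (noted in the paragraph preceding the statement) that the defect equations coming from the $1$-norm Hessian coincide with the CHM defect equations of \cite{tz2}. I would organise the argument around condition (3) as the hub, proving the four implications $(3)\Rightarrow(2)$, $(2)\Rightarrow(3)$, $(3)\Rightarrow(1)$, $(1)\Rightarrow(3)$.

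The implication $(3)\Rightarrow(2)$ is Proposition 1.7 (2) specialised to $M=N$: if $d(U)=2N-1$, then the chain $T_U^\times C_N\subset T_UC_N\subset\widetilde{T}_UC_N$ collapses, forcing every deformation to be trivial and hence $U$ to be isolated in $D_N$. For the converse $(2)\Rightarrow(3)$, I would invoke Conjecture 1.8 in the square case as a black box; this is the only truly ``global'' step, upgrading isolation to the vanishing of all non-trivial deformations at all orders. For $(3)\Rightarrow(1)$, consider $\Phi(V)=\sum_{ij}|V_{ij}|$ on $\sqrt{N}U_N$: its global maximum is attained precisely on the CHM locus, and CHM points are automatic critical points of $\Phi$. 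Parametrising a curve through $U$ as $V(t)=U\exp(itX)$ with $X=X^*$, I would expand $\Phi$ to second order and verify that the null cone of the resulting quadratic form, after the change of variables $A_{ij}=(EK)_{ij}\bar{H}_{ij}$ from Proposition 1.5 (2), coincides with $\widetilde{T}_UC_N$. Since the trivial directions always preserve $\Phi$ and sit inside this null cone, minimal defect $d(U)=2N-1$ means the null cone reduces exactly to $T_U^\times C_N$, so $\Phi$ is strictly concave on any complement, and $U$ is a strict local maximum, i.e.\ a strict AHM. The reverse $(1)\Rightarrow(3)$ is then the contrapositive: any non-trivial element of $\widetilde{T}_UC_N$ transverse to $T_U^\times C_N$ produces a direction along which the Hessian of $\Phi$ vanishes, preventing strictness.

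The main obstacle will be the Hessian identification inside $(1)\iff(3)$. The function $\Phi$ is only piecewise smooth on $U_N$, with singularities exactly where some entry $V_{ij}$ vanishes; this is harmless at a CHM point $U$ but requires care when one expands $V(t)=U\exp(itX)$ to second order. One must also show that the quadratic form is not merely non-positive on a complement of $\widetilde{T}_UC_N$ but \emph{strictly} negative, and quotient by the trivial gauge $\mathbb{T}^{2N-1}$ so that ``strict local maximum modulo trivial directions'' matches the dimension count of Proposition 1.7. A secondary, and perhaps unavoidable, obstacle is the reliance on Conjecture 1.8 in $(2)\Rightarrow(3)$: without it the proof only delivers $(3)\Rightarrow(2)$ together with the equivalence $(1)\iff(3)$, leaving $(2)\Rightarrow(1)$ strictly conditional on the square-case isolation conjecture.
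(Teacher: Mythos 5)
This statement is Conjecture 6.4 of the paper: it is presented as an open problem, with no proof given (the authors explicitly defer to \cite{bn2} and state that the relevant defect theory ``is not developed yet''), so there is no proof of the paper to compare yours against. Your proposal, as you partly acknowledge, is not a proof but a conditional reduction, and it contains gaps beyond the one you flag. First, the implication $(2)\Rightarrow(3)$ rests on Conjecture 1.8, which is itself open; so at best you obtain $(3)\Rightarrow(2)$ unconditionally. Second, and more seriously, your treatment of $(1)\Rightarrow(3)$ silently assumes that a strict almost Hadamard matrix is already a complex Hadamard matrix: you expand the $1$-norm $\Phi$ at a CHM point $U$. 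But condition (1) only says that $U\in\sqrt{N}U_N$ is a strict local maximizer of $\Phi$; showing that such a point must lie on the CHM locus is essentially Conjecture 6.3 (the almost Hadamard conjecture), also open. Without that input, a strict AHM which is not Hadamard is not excluded, and the equivalence collapses.

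Third, even granting that $U$ is a CHM, the second-order argument for $(1)\Leftrightarrow(3)$ is logically insufficient. The identification of the null cone of the Hessian of $\Phi$ along curves $V(t)=U\exp(itX)$ with the enveloping tangent space $\widetilde{T}_UC_N$ is asserted, not proved; this is precisely the ``defect computation for almost Hadamard matrices'' that the paper says only \emph{suggests} the conjecture. And even with that identification, strictness is not decided by the Hessian alone: a direction in which the second-order term vanishes does not prevent $U$ from being a strict local maximum (think of $-x^4$ at the origin), so your contrapositive for $(1)\Rightarrow(3)$ fails; conversely, negative definiteness transverse to the trivial directions gives strict local maximality only after controlling the non-smooth locus of $\Phi$ and the gauge quotient, which you mention but do not carry out. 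In short, what you have is a plausible strategy that makes the conjecture conditional on Conjectures 1.8 and 6.3 plus an unproved Hessian--defect identification, not a proof of the stated equivalences.
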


We refer to \cite{bn2} for further details on this conjecture. We believe that there is a relation here with the above quantum group considerations, coming from a theory of defect for the ``flat'' magic unitary matrices, in the sense of \cite{bn1}. However, this defect theory is not developed yet, and we have no further results on this subject, for the moment.

\end{document}